\DeclareSymbolFont{bchoperators}{T1}{bch}{m}{n}
\renewcommand{\operator@font}{\mathgroup\symbchoperators}
\titleformat{\section}{\normalfont\bfseries\filcenter}{\thesection}{1em}{}
\titleformat{\subsection}{\normalfont\bfseries}{\thesubsection}{1em}{}
\titleformat{\subsubsection}{\normalfont\bfseries}{\thesubsubsection}{1em}{}
\numberwithin{equation}{section}
\definecolor{darkgreen}{rgb}{0,0.5,0}
\definecolor{rem}{rgb}{0.8,0,0}
\definecolor{new}{rgb}{0.7,0,0.6}
\definecolor{reply}{rgb}{0,0,0.8}
\newtheorem{theorem}{Theorem}[section]
\newtheorem{lemma}[theorem]{Lemma}
\newtheorem{proposition}[theorem]{Proposition}
\newtheorem{corollary}[theorem]{Corollary}
\newtheorem{conjecture}[theorem]{Conjecture}
\theoremstyle{definition}
\newtheorem{definition}[theorem]{Definition}
\newtheorem{question}[theorem]{Question}
\theoremstyle{remark}
\newtheorem{remark}[theorem]{Remark}
\newcommand{\set}[1]{\left\lbrace #1 \right\rbrace}
\newcommand{\diamondop}[1]{\langle #1 \rangle} 
\newcommand{\field}[1]{\mathbb{#1}}  
\newcommand{\Q}{\field{Q}} 
\newcommand{\Qbar}{\bar{\Q}}
\newcommand{\C}{\field{C}} 
\newcommand{\Z}{\field{Z}} 
\newcommand{\F}{\field{F}} 
\renewcommand{\P}{\field{P}}
\newcommand{\calO}{\mathcal{O}}
\newcommand{\bfe}{\text{\bf e}}
\newcommand{\HH}{\mathfrak{H}} 
\newcommand{\eps}{\varepsilon}
\newcommand{\floor}[1]{\left\lfloor #1 \right\rfloor}
\DeclareMathOperator{\Aut}{Aut}
\DeclareMathOperator{\CM}{CM}
\DeclareMathOperator{\disc}{disc}
\DeclareMathOperator{\End}{End}
\DeclareMathOperator{\GL}{GL}
\DeclareMathOperator{\gon}{gon}
\DeclareMathOperator{\new}{new}
\DeclareMathOperator{\ord}{ord}
\DeclareMathOperator{\perdim}{perdim}
\DeclareMathOperator{\primes}{Primes}
\DeclareMathOperator{\sdim}{strdim}
\DeclareMathOperator{\str}{str}
\DeclareMathOperator{\tors}{tors}
\newcommand{\Snew}{S_{\new}}
\begin{document}

\author{Maarten Derickx}
\address{Department of Mathematics,
	University of Zagreb,
  Horvatovac 102a,
  10000 Zagreb, Croatia}
\email{maarten@mderickx.nl}

\author{Michael Stoll}
\address{Mathematisches Institut,
	Universität Bayreuth,
	95440 Bayreuth, Germany.}
\email{Michael.Stoll@uni-bayreuth.de}

\title[Prime order torsion on elliptic curves]%
{Prime order torsion on elliptic curves over number fields \\
Part I: Asymptotics}

\date{\today}

\begin{abstract}
	We study the asymptotics of the set~$S(d)$ of possible prime orders of $K$-rational points
	on elliptic curves over number fields~$K$ of degree~$d$ as $d$ tends to infinity.
	Assuming some conjectures on the sparsity of newforms of weight~$2$ and
	prime level with unexpectedly high analytic rank, we show that $\max S(d) \le 3d + 1$
	for sufficiently large even~$d$ and $\max S(d) = o(d)$ for odd~$d$.
\end{abstract}

\keywords{Elliptic curve, torsion point, number fields}

\subjclass[2020]{Primary 11G05, 11G18, Secondary 14G05, 14G25, 14G35}

\maketitle


\section{Introduction}

This paper builds on~\cite{DKSS} by the present authors together with
Kamienny and Stein. For the convenience of the reader, we introduce
the relevant context here.

Let $K$ be an algebraic number field and let $E$ be an elliptic curve
over~$K$. Then by work of Mordell~\cite{Mordell} and Weil~\cite{Weil}
the group~$E(K)$ of $K$-rational points on~$E$
is a finitely generated abelian group;
in particular, its torsion subgroup~$E(K)_{\tors}$ is a finite abelian
group (this can also be shown in a
number of more elementary ways than relying on the Mordell-Weil theorem),
and one can ask which finite abelian groups can occur as the
torsion subgroup of~$E(K)$ for some elliptic curve over some number
field~$K$ of some fixed degree~$d$.

For $K = \Q$ (equivalently, $d = 1$), Mazur~\cites{mazur1,mazur2}
famously proved that there are only finitely many possibilities
for the torsion subgroup and confirmed that the conjectured list
is complete. Later, Merel~\cite{merel} extended this
by showing that for any given degree~$d$, there are only finitely
many possibilities for~$E(K)_{\tors}$ when $[K : \Q] = d$.
These have been determined explicitly for $d = 2$ by Kamienny~\cite{kamienny2}
building on work by Kenku and Momose~\cite{KenkuMomose}, for $d = 3$
by Derickx, Etropolski, van Hoeij, Morrow and Zureick-Brown~\cite{DEvHMZ}
building on previous work of Jeon, Kim and Schweizer~\cite{JKS}
and Bruin and Najman~\cite{bruin-najman}, and very recently
also for $d = 4$ by Derickx and Najman~\cite{derickx-najman}.

One key step in these finiteness results is to show that there
are only finitely many prime numbers~$p$ that can divide the order
of~$E(K)_{\tors}$, i.e., can occur as the order of an element
of~$E(K)$, for $K$ of degree~$d$. This motivates the following
definition (following~\cite{kamiennymazur}).

\begin{definition}
  Let $n \ge 1$ be an integer. Then we define $S(d)$ to be the set
  of all prime numbers~$p$ such that there exists a number field~$K$
  of degree~$d$, an elliptic curve~$E$ over~$K$ and a point $P \in E(K)$
  such that $P$ has order~$p$.

  Following~\cite{DKSS}, we write $\primes(x)$ for the set of all prime numbers~$p$
  such that $p \leq x$.
\end{definition}

The following values of~$S(d)$ are known.

\begin{theorem} \label{thm:known}
  \[ \renewcommand{\arraystretch}{1.2}
    \begin{array}{r@{{}={}}l@{\qquad}l}
      S(1) & \primes(7)  & \text{\cites{mazur1,mazur2}}, \\
      S(2) & \primes(13) & \text{\cite{kamienny2}}, \\
      S(3) & \primes(13) & \text{\cites{parent2,parent3}}, \\
      S(4) & \primes(17) & \text{\cite{DKSS}}, \\
      S(5) & \primes(19) & \text{\cite{DKSS}}, \\
      S(6) & \primes(19) \cup \set{37} & \text{\cite{DKSS}}, \\
      S(7) & \primes(23) & \text{\cite{DKSS}, \quad and} \\
      S(8) & \primes(23) & \text{\cite{Khawaja}.}
    \end{array}
  \]
\end{theorem}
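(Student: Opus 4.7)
The plan is to split each equality $S(d) = T_d$ into two inclusions: the realization direction $T_d \subseteq S(d)$, establishing that every prime listed is actually the order of a rational point on some elliptic curve over some number field of degree~$d$, and the exclusion direction $S(d) \subseteq T_d$, ruling out every other prime. These two halves require very different techniques, and the exclusion direction is the substantial one.

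For the realization direction, one exhibits for each $p \in T_d$ an explicit triple $(K, E, P)$. The natural source is the modular curve $X_1(p)$, whose non-cuspidal points parametrize pairs $(E, P)$ with $P$ of order~$p$: any non-cuspidal closed point of $X_1(p)$ of degree $\leq d$ produces a witness. For the small primes $p \leq 19$ (and $p = 23$ when $d \geq 7$) the curves $X_1(p)$ have low gonality, so systematic families of degree~$d$ points are readily available; one can also use classical parametrizations. The mildly sporadic input is $37 \in S(6)$, which requires a specific degree-$6$ point on $X_1(37)$ (as produced in~\cite{DKSS}); otherwise the realization side just reduces to recording the standard low-degree parametrizations.

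The exclusion direction is where the real work lives, and I would follow the Mazur--Kamienny--Merel paradigm. If $p \in S(d)$ with $p$ exceeding the target bound, then $X_1(p)$ has a non-cuspidal closed point of degree $\leq d$, equivalently a non-cuspidal $\Q$-point on the $d$-th symmetric power $X_1(p)^{(d)}$. One picks a well-chosen cusp~$c$, lifts to $J_1(p)$, and tries to show that the induced map $X_1(p)^{(d)} \to J_1(p)/A$ (for a suitable abelian quotient $A$, often cut out by a saturated ideal in the Hecke algebra) is a formal immersion at~$c$ after reduction modulo some auxiliary prime~$\ell$. The formal immersion criterion is verified by a rank computation on $q$-expansions of weight~$2$ cusp forms, using Hecke eigenvalue data. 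Once formal immersion is established, one concludes that the degree~$d$ point must specialize to the cusp modulo~$\ell$, and an elementary argument (using $\ell$-adic uniformization or the shape of cuspidal reductions) yields a contradiction for $p$ above the bound.

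The main obstacle, especially for $d \in \{4,5,6,7,8\}$, is twofold. First, the Hecke-algebra computations underlying formal immersion grow rapidly in complexity with~$d$ and with~$p$, and they fail exactly at sporadic primes such as $p = 37$ in degree~$6$; so one needs both a verified computation showing success at each $p \in T_d$ above the Mazur range, and an honest analysis at each sporadic failure to decide whether the prime actually lies in $S(d)$ or can be excluded by other means (e.g. gonality, Chabauty-type methods, or analysis of $X_0(p)$-quotients). Second, certifying the absence of any further sporadic degree~$d$ points on $X_1(p)$ for intermediate primes $p$ requires tailored arguments beyond the uniform formal-immersion method. These obstacles are exactly why each row of the table rests on a separate paper; I would not expect a uniform proof, but rather a row-by-row orchestration of the methods just described.
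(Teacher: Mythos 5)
The paper does not prove \cref{thm:known}; it records these equalities as known facts with pointers to the original references (Mazur for $d=1$, Kamienny and Kenku--Momose for $d=2$, Parent for $d=3$, \cite{DKSS} for $d=4,\dots,7$, and Khawaja for $d=8$), so there is no internal argument to compare against. Your sketch is a reasonable high-level summary of the Mazur--Kamienny--Merel paradigm that underlies those references: the split into a realization half ($T_d \subseteq S(d)$, via explicit low-degree points on $X_1(p)$, including the sporadic degree-$6$ point on $X_1(37)$) and an exclusion half ($S(d) \subseteq T_d$, via a formal-immersion criterion for $X_1(p)^{(d)}$ into a rank-zero quotient of $J_1(p)$ at a cusp, checked by rank computations on $q$-expansions of weight-$2$ Hecke eigenforms). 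One small imprecision worth flagging: the conclusion of the formal-immersion step is not merely that the candidate point \emph{reduces} to the cusp modulo~$\ell$, but---combining the specialization argument (coming from the reduction type of the putative elliptic curve at a prime above~$\ell$) with the formal-immersion property on a quotient whose Mordell--Weil group is finite---that the degree-$d$ divisor must actually \emph{equal} $d$ times the cusp in characteristic zero, which contradicts non-cuspidality. The remaining structural remarks you make (growing complexity of the Hecke computations, the need for tailored arguments at sporadic primes, and the row-by-row nature of the table) match how those references actually proceed.
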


The recent determination of~$S(8)$ by Khawaja follows the approach
taken in~\cite{DKSS}. In the second part of this series we will give an
alternative proof that requires less computation.

It is much easier to determine the set~$S'(d)$ of primes~$p$ such that
there are \emph{infinitely many} elliptic curves~$E$ over number fields~$K$
of degree~$d$ with distinct $j$-invariants that have a $K$-point of
order~$p$. This is mostly a question about the gonality of the modular
curve~$X_1(p)$. The following is known.

\begin{proposition}
  \begin{align*}
    S'(1) &= \primes(7), &
    S'(2) &= \primes(13), &
    S'(3) &= \primes(13), &
    S'(4) &= \primes(17), \\
    S'(5) &= \primes(19), &
    S'(6) &= \primes(19), &
    S'(7) &= \primes(23), &
    S'(8) &= \primes(23).
  \end{align*}
\end{proposition}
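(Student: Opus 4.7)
The plan is to translate the condition $p \in S'(d)$ into a statement about $\Q$-rational morphisms from the modular curve~$X_1(p)$ and then invoke the known geometry of these curves. By the standard moduli interpretation, a non-cuspidal $K$-rational point of $Y_1(p) := X_1(p) \setminus \cusps$ is (an isomorphism class of) a pair $(E, P)$ with $E/K$ an elliptic curve and $P \in E(K)$ of order~$p$, and the $j$-line projection $X_1(p) \to X(1)$ has finite fibres, so distinct $j$-invariants correspond to distinct closed points. Hence $p \in S'(d)$ is equivalent to $X_1(p)/\Q$ carrying infinitely many non-cuspidal closed points of degree exactly~$d$.

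I would then invoke the Faltings--Frey dichotomy, a consequence of Faltings' theorem applied to symmetric powers: for a smooth projective curve $C/\Q$ of genus $\ge 2$, the existence of infinitely many closed points of degree~$\le d$ is equivalent to the existence of a dominant $\Q$-morphism of degree~$\le d$ from~$C$ onto either $\P^1$ or an elliptic curve with infinite Mordell--Weil group. For~$C = X_1(p)$ with $p \le 29$, the decomposition of $J_1(p)$ into simple factors is well-understood and the relevant elliptic quotients have rank zero over~$\Q$, so the ``elliptic curve'' alternative contributes no extra degree-$d$ families in our range. The genus-$1$ case $p = 11$ is handled similarly but separately: $X_1(11)$ is an elliptic curve of rank zero, so it carries finitely many degree-$1$ points but infinitely many degree-$d$ points for every $d \ge 2$ via any degree-$d$ cover of~$\P^1$.

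The final step is to substitute the known values: $X_1(p) \cong \P^1_\Q$ for $p \le 7$; $\gon_\Q(X_1(11)) = \gon_\Q(X_1(13)) = 2$; and $\gon_\Q(X_1(17)) = 4$, $\gon_\Q(X_1(19)) = 5$, $\gon_\Q(X_1(23)) = 7$, $\gon_\Q(X_1(29)) \ge 9$ by the work of Derickx and van Hoeij. Whenever $\gon_\Q(X_1(p)) \mid d$, infinitely many pairs $(E, P)$ over degree-$d$ fields arise by pulling back $\P^1(\Q)$ via a gonality map and then base-changing; the two ``intermediate'' cases ($p = 19$, $d = 6$ and $p = 23$, $d = 8$) require instead producing a $\Q$-morphism $X_1(p) \to \P^1$ of the prescribed degree, which is again in the cited literature. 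Matching these inputs against $d = 1, \ldots, 8$ reproduces the tabulated sets.

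The main obstacle in a self-contained proof is therefore the gonality determinations themselves (together with the constructions of the degree-$6$ and degree-$8$ maps used in the intermediate cases); these I would import as black boxes rather than reprove, since they rely on substantial computations with cuspidal divisors and linear series on~$X_1(p)$. The Faltings--Frey bookkeeping and the verification that the elliptic-quotient alternative is vacuous for $p \le 29$ are comparatively routine.
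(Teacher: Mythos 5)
The paper's own proof is a one-line citation to \cite{mazur1}, \cite{kamienny2}, \cite{JKL1}, \cite{JKL2}, and \cite{derickx_hoeij}*{Thm.~3}, so your attempt to reconstruct the underlying argument goes beyond the text. Your lower-bound constructions are fine (gonality maps for the cases where $\gon_\Q(X_1(p)) \mid d$, the auxiliary degree-$6$ and degree-$8$ $\Q$-rational functions on $X_1(19)$ and $X_1(23)$, and the separate handling of the genus-one curve $X_1(11)$). The gap is in the upper bound, and it is a genuine one: the ``Faltings--Frey dichotomy'' you state is not a theorem. What Faltings' theorem applied to $W_d \subseteq J_1(p)$ actually yields is that if $X_1(p)^{(d)}(\Q)$ is infinite, then $W_d$ contains a translate of a positive-rank abelian subvariety of $J_1(p)$; it does \emph{not} follow that $X_1(p)$ carries a dominant $\Q$-morphism of degree $\le d$ to $\P^1$ or to a positive-rank elliptic curve. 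That stronger conclusion is essentially the Abramovich--Harris conjecture, which Debarre and Fahlaoui disproved for $d \ge 4$, i.e.\ precisely the range $d = 4,\dots,8$ where you need it. The result of Frey that the paper itself uses (see \cref{prop:Sprime} and \cite{frey}) gives only $\gon_\Q(X_1(p)) \le 2d$, a factor-of-two loss; that alone cannot rule out, for instance, $29 \in S'(8)$, since $\gon_\Q(X_1(29)) = 11 \le 16$.

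To close the gap one must, after using Frey's inequality to whittle the candidate primes down to a finite list, carry out a curve-by-curve analysis of $W_d(X_1(p))$ for the remaining $p$: both that there is no $\Q$-rational $g^1_d$, and that no positive-rank abelian subvariety of $J_1(p)$ sits inside $W_d$ in a way that would produce infinitely many degree-$d$ points. That analysis is exactly the content of \cite{derickx_hoeij}*{Thm.~3}, which the paper cites as its source for $5 \le d \le 8$, and it is considerably more than the ``comparatively routine'' bookkeeping you describe. So the broad strategy is right, but the key upper-bound input you appeal to is stated too strongly and would need to be replaced by Frey's $\gon_\Q \le 2d$ plus a case-by-case verification for the small primes that survive it.
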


For $d = 1, 2, 3, 4$, this is shown in~\cites{mazur1,kamienny2,JKL1,JKL2},
respectively; for $5 \le d \le 8$, this follows
from~\cite{derickx_hoeij}*{Thm.~3}.

The gonality of~$X_1(p)$ grows like~$p^2$~\cite{abramovich}; this
implies that $S'(d) \subset \primes\bigl(O(\sqrt{d})\bigr)$;
see~\cref{prop:Sprime} below. On the other
hand, denoting by $S_{\CM}(d)$ the set of primes that can occur as
orders of points on elliptic curves with complex multiplication
over a number field of degree~$d$, the results of~\cite{CCS} show
that $S_{\CM}(s) \subset \primes\bigl(O(d)\bigr)$ and that
$3d+1 \in S_{\CM}(d)$ when $3d+1$ is prime. (Let $p = 3d+1$. There is a
pair of quadratic points defined over~$\Q(\sqrt{-3})$ with $j$-invariant
zero on~$X_0(p)$. The set-theoretic preimage gives a Galois orbit of points
of degree $2 \cdot \tfrac{p-1}{2} \cdot \tfrac{1}{3} = d$ on~$X_1(p)$,
since the covering $X_1(p) \to X_0(p)$ ramifies with index~$3$ above the
points with $j$-invariant zero.) So we will certainly have
$S'(d) \subsetneq S(d)$ for infinitely many~$d$. The data
in~\cite{hoeij} suggest that this is the case for all $d \ge 9$;
by \cite{DKSS}*{Prop.~1.4}, we know that $S(6) \setminus S'(6) = \{37\}$.

It is perhaps tempting
to assume that for large enough~$d$, the only sporadic points of degree~$d$
on~$X_1(p)$ are CM~points, as this seems to be the expectation for rational
points on modular curves in general. This would imply that $S(d) \subseteq \primes(3d+1)$
for large~$d$. However, consulting the table in~\cite{hoeij}, it appears
that there are many sporadic non-CM points (like the degree~$6$ points
on~$X_1(37)$ mentioned above). Still, the bound $p \le 3d+1$
is consistent with this information for $d \ge 13$.
One of our aims in this paper is to show that such a bound for large~$d$
is implied by conjectures on the sparsity of newforms of prime level
and weight~$2$ that have unexpectedly large analytic rank.

For the necessary background on modular curves, in particular the
definition of the modular curve~$X_H$ between $X_1(p)$ and~$X_0(p)$,
where $H$ is a subgroup of $\Aut(X_1(p)/X_0(p)) = (\Z/p\Z)^\times/\{\pm 1\}$,
see~\cite{diamondim} or~\cite{DKSS}*{Section~2}.

In this first part of a pair of papers, we will focus on the asymptotic
behavior of the set~$S(d)$ as $d$ tends to infinity. The second part will
consider specific small values of~$d$.


\subsection*{Acknowledgments}

We would like to thank Drew Sutherland for fruitful discussions and in particular
for his very valuable help with the computational aspects of this study and
Loïc Merel for some information around the gonality lower bound.


\section{Kernels of Hecke correspondences} \label{sec:Hecke_ker}

Let $N \ge 1$ be an integer and fix a subgroup $H \subseteq G \colonequals (\Z/N\Z)^\times/\{\pm 1\}$.
$X_H$ will denote the modular curve $X_1(N)/H$.
We denote by~$C_H \subseteq X_H$ the subscheme consisting of the (finitely many) cusps.

We recall \cite{DKSS}*{Prop.~2.3}, slightly strengthened
using the observation that the rational cusps on~$X_H$ are killed by
$T_q - \diamondop{q} - q$ when $q \nmid N$.

\begin{proposition} \label{prop:ann_rat_tors}
  Let $q \nmid N$ be a prime and $P \in J_H(\Q)_{\tors}$ such that
  $q$ is odd or $P$ is a sum of a point of odd order and a point in the
  subgroup generated by differences of rational cusps.
  Then $(T_q - \diamondop{q} - q)(P) = 0$.
\end{proposition}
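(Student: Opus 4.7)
The plan is to verify the identity for each of the two summands in the hypothesis separately and then combine them by linearity of $T_q - \diamondop{q} - q$. When $q$ is odd, the statement is exactly \cite{DKSS}*{Prop.~2.3} applied to~$P$ itself (whose proof rests on the Eichler--Shimura congruence: after reducing modulo~$q$, Frobenius acts trivially on the image of any $\Q$-rational prime-to-$q$ torsion point, and the relation between $T_q$, Frobenius, Verschiebung, and $\diamondop{q}$ yields the identity; the $q$-primary part when $q$ is odd is handled by a separate argument in~\cite{DKSS}). Otherwise $q = 2$, we write $P = P_{\text{odd}} + P_{\text{cusp}}$ as in the hypothesis, invoke \cite{DKSS}*{Prop.~2.3} on $P_{\text{odd}}$, and are reduced to proving $(T_q - \diamondop{q} - q)(P_{\text{cusp}}) = 0$, which is the new content of the strengthening.

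By linearity on divisors supported at cusps, it suffices to establish the divisor-level identity $T_q(c) = \diamondop{q}(c) + q\,c$ for every $\Q$-rational cusp $c$ on~$X_H$. I would prove this by pulling back to $X_1(N)$ and parametrizing a cusp above $\infty \in X_0(N)$ by the Tate curve $E_{q_0} = \mathbb{G}_m/q_0^{\Z}$ equipped with level structure $\zeta_N \in \mu_N \subset E_{q_0}$. Unwinding the moduli definition
\[ T_q(c) = \sum_{\substack{C \subset E_{q_0}[q] \\ |C| = q}} \bigl[(E_{q_0}/C,\,\zeta_N \bmod C)\bigr], \]
one classifies the $q+1$ geometric order-$q$ subgroups of $E_{q_0}$ as follows: (i) the subgroup $\mu_q$ gives quotient $E_{q_0^q}$ via the $q$-th power map, under which $\zeta_N \mapsto \zeta_N^q = q\cdot\zeta_N$, so this contribution represents the cusp $\diamondop{q}(c)$; (ii) each of the remaining $q$ subgroups, generated by $\zeta_q^j \cdot q_0^{1/q}$ for $j = 0, \ldots, q-1$, gives quotient $E_{q_0^{1/q}}$ with level structure still $\zeta_N$, and so represents the cusp~$c$ itself (only the cusp-width changes). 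Summing yields the required identity $T_q(c) = \diamondop{q}(c) + q\,c$, which then descends from~$X_1(N)$ to~$X_H$.

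The main obstacle is the bookkeeping in the Tate-curve calculation: making sure that the quotient by~$\mu_q$ realizes ``multiplication by~$q$'' on the level structure (and hence the diamond operator~$\diamondop{q}$ in the paper's convention), and that the descent through~$H$ preserves the divisor identity for cusps of~$X_H$ that are rational over~$\Q$ even when their individual lifts to~$X_1(N)$ may not be. Once this cusp identity is in place, linearity of $T_q - \diamondop{q} - q$ combines it with \cite{DKSS}*{Prop.~2.3} to complete the proof.
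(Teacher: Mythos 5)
The paper does not give a detailed argument here; it simply states that \cite{DKSS}*{Prop.~2.3} is being ``slightly strengthened using the observation that the rational cusps on $X_H$ are killed by $T_q - \diamondop{q} - q$.'' Your proposal unfolds exactly that one-line justification: you split off the new cuspidal case, reduce by linearity, and verify the divisor identity $T_q(c) = \diamondop{q}(c) + q\,c$ via the Tate-curve classification of the $q+1$ subgroups of order~$q$. That is the right strategy and matches the intent of the paper's terse remark, and your Tate-curve bookkeeping for the cusp you chose is correct: the $\mu_q$-quotient is the $q$-power map (sending $\zeta_N \mapsto \zeta_N^q$, hence producing $\diamondop{q}(c)$), and the other $q$ quotients enlarge the lattice without touching $\zeta_N$, each producing~$c$.

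The one point you flag but do not resolve is more substantive than a mere bookkeeping nuisance, so let me be concrete about why. There are two Galois-orbit types of cusps on~$X_1(N)$ over~$\Q$: those above $\infty \in X_0(N)$ (Tate curve with a $\mu_N$-level structure, i.e.\ your $(E_{q_0},\zeta_N)$) and those above $0 \in X_0(N)$ (Tate curve with a $q_0^{1/N}$-level structure). Running your computation on the latter type~$c'$, with the same conventions, yields $T_q(c') = c' + q\,\diamondop{q}(c')$, which is \emph{not} annihilated by $T_q - \diamondop{q} - q$ unless $\diamondop{q}$ fixes~$c'$. So the identity you want holds only for one of the two cusp types, and which one is the set of $\Q$-rational cusps depends on whether the paper works with the $(E,P)$-model or the $(E,\iota\colon\mu_N\hookrightarrow E)$-model of~$X_1(N)$, and on the matching choice between $T_q$ and its transpose $T_q^*$ and between $\diamondop{q}\colon (E,P)\mapsto(E,qP)$ and its inverse. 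These choices are not independent of the Eichler--Shimura normalization underpinning \cite{DKSS}*{Prop.~2.3}. Your proof becomes complete once you pin down the model used in \cite{DKSS} and \cite{CES}, check that the $\Q$-rational cusps are the type for which your computation produces $T_q(c) = \diamondop{q}(c) + q\,c$ (adjusting the Hecke/diamond conventions consistently if not), and note that the resulting divisor identity, holding for every cusp of that type on $X_1(N)$, automatically descends under $X_1(N) \to X_H$. The last descent step is less delicate than you suggest: the identity is a divisor-level statement valid for each geometric cusp of the right type, so it does not rely on the individual lifts to $X_1(N)$ being $\Q$-rational.
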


The following is a more general version of~\cite{DKSS}*{Prop.~2.4}.

      \begin{proposition} \label{prop:hecke_op_kernel}
  Let $N \ge 1$ be an integer
  and fix a subgroup $H \subseteq G \colonequals (\Z/N\Z)^\times/\{\pm 1\}$.
  Let $h_1, h_2 \in \Z_{\ge 0}[G/H][x]$ be polynomials whose coefficients
  are linear combinations of diamond operators on~$X_H$ with nonnegative
  integer coefficients. We assume that $h_1$ is monic and that
  $\deg(h_1) > \deg(h_2) \ge 0$. Let $q \nmid N$ be a prime.
  Then $t_1 = h_1(T_q)$ and $t_2 = h_2(T_q)$ can be considered as (effective)
  correspondences on~$X_H$, and so $t = t_1 - t_2$ induces an endomorphism
  of the divisor group of~$X_H$ over~$\C$. If $D$ is a divisor on~$X_H$
  such that $t(x) = 0$, then $D$ is supported in cusps.
\end{proposition}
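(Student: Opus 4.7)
The plan is to adapt the proof of \cite{DKSS}*{Prop.~2.4}, which treats the special case $h_2 = 0$, to this more general statement. First, I would check that $t_1$ and $t_2$ are effective correspondences on~$X_H$. Each diamond operator is an automorphism of $X_H$, and for $q \nmid N$ the Hecke correspondence $T_q$ is effective of degree $q+1$; both properties are preserved under the nonnegative-integer combinations and compositions that build $h_1(T_q)$ and $h_2(T_q)$. Moreover, $T_q$ and the diamond operators each preserve the decomposition of the divisor group of~$X_H$ into its cuspidal and non-cuspidal parts, so the same holds for $t_1, t_2$ and $t = t_1 - t_2$. Decomposing $D = D_c + D_{nc}$ accordingly, the condition $t(D) = 0$ splits into the two parts, reducing the problem to the case that $D$ is supported on non-cuspidal points and $t_1(D) = t_2(D)$; the task is then to show that this forces $D = 0$.

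Writing $D = D^+ - D^-$ with $D^+, D^-$ effective of disjoint non-cuspidal support, the hypothesis takes the form of an equality
\[
t_1(D^+) + t_2(D^-) = t_1(D^-) + t_2(D^+)
\]
of effective divisors on~$X_H$, and the aim is to deduce $D^+ = D^- = 0$. The intuition is that the top-degree term $T_q^{\deg h_1}$ of $t_1$ (appearing with leading coefficient the identity, because $h_1$ is monic) produces, from any non-cuspidal point $y$, a sum of non-cuspidal points at ``Hecke depth'' $\deg h_1$ from~$y$, which lies beyond the range of $t_2$ (whose $T_q$-degree is strictly smaller). These furthest-depth points should appear with positive multiplicity on only one side of the equality, forcing $D = 0$.

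The main obstacle is to make this ``Hecke depth'' argument rigorous, since Hecke orbits can have nontrivial overlaps (notably at CM~points) and the support of~$D$ over~$\C$ need not be algebraic. The route I would take is first to reduce to the case of an algebraic divisor (using that $t$ is defined over~$\Q$, so the kernel restricted to any finite $\Q$-subspace of divisors is a $\Q$-form, allowing specialization), and then to reduce modulo a prime above~$q$ in a smooth integral model of~$X_H$, available since $q \nmid N$. On the special fibre the Eichler--Shimura congruence splits each copy of $T_q$ into Frobenius and (diamond-twisted) Verschiebung contributions, and the expansion of $t_1$ yields a pure Frobenius-iterate summand $F^{\deg h_1}$. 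Since $F$ acts as a bijection on non-cuspidal $\bar\F_q$-points and $t_2$ has $T_q$-degree strictly smaller than $\deg h_1$, no such pure $F^{\deg h_1}$-contribution arises from~$t_2$; matching effective divisors on both sides then forces the reduction of $D$ to vanish, and hence $D = 0$.
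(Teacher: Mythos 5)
Your opening reductions (splitting off the cuspidal part and writing $D = D^+ - D^-$ so that the claim becomes an equality of effective divisors) are sound and essentially what the paper does implicitly, and your intuition — that the leading term $T_q^{\deg h_1}$ of $t_1$ reaches ``Hecke depth'' $\deg h_1$, which is out of range of $t_2$ — is exactly the right one.

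However, the route you take to rigorize it has a genuine gap. Reducing modulo a prime above~$q$ and using Eichler--Shimura cannot isolate the ``pure $F^{\deg h_1}$'' contribution, because over~$\overline{\F}_q$ the Frobenius is eventually periodic on any finite set of points: the support of (the reduction of)~$D$ lives over some~$\F_{q^m}$, so~$F^m$ is the identity on it, and the pure $F^{\deg h_1}$ summand can coincide with lower Frobenius powers, with mixed $F^a V^b$ terms (recall $FV = VF = q$, so $F^aV^b$ depends essentially only on $a-b$), and with contributions from~$t_2$. Already the simplest case $h_1 = x$, $h_2 = c$, $D = P - Q$ with $\bar P, \bar Q \in X_H(\F_q)$ illustrates the collapse: $F(\bar P) = \bar P$ there, so the ``leading'' Frobenius contribution of $T_q(\bar P)$ is $\bar P$ itself and is indistinguishable from the $t_2$ side. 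On top of this, the map from algebraic divisors in characteristic~$0$ to divisors on the special fibre is not injective (distinct points can reduce to the same point), so even if you could show the reduction of~$D$ vanishes, it would not force $D = 0$.

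The paper avoids all of this by staying over~$\C$, where the $q$-isogeny graph is \emph{infinite}: an infinite $(q+1)$-regular tree in the non-CM (and inert CM) case, and an infinite-depth volcano in the split/ramified CM case, with a small adjustment for $j = 0$ and $j = 1728$. One then chooses a vertex~$v$ of maximal distance (or maximal volcano level) among those met by the support of~$D$, and a vertex~$w$ at distance $\deg h_1$ further out; $w$ is reached only by the monic leading term of $h_1(T_q)$ and not at all by $h_2(T_q)$, and the map from $X_H$ to the $j$-line is étale there, so $t(D)$ has nonzero coefficient at some point above~$w$. This is precisely the depth argument you wanted, but it needs the infinite graph over~$\C$ to work; passing to positive characteristic, or even to algebraic points, is not the right move.
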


\begin{proof}
  We set $d_1 = \deg(h_1)$ and $d_2 = \deg(h_2)$.

  A non-cuspidal point $x \in X_H(\C)$ corresponds to an elliptic
  curve~$E$ over~$\C$ with additional structure.
  The point $\diamondop{a}(x)$ corresponds to
  the same curve~$E$ (with modified extra structure), and $T_q(x)$ is a
  sum of points corresponding to all the elliptic curves that are
  $q$-isogenous to~$E$. We define the \emph{$q$-isogeny graph}~$G_q$
  to have as vertices the isomorphism classes of all elliptic curves
  over~$\C$; two vertices are connected by an edge when there is a
  $q$-isogeny between the corresponding curves.
  There is a natural map $\gamma$ from $X_H(\C) \setminus C_H(\C)$
  to the vertex set of~$G_q$. Let $x$ be a non-cuspidal point in the support
  of a divisor~$D$ on~$X_H$ and let $G_{q,x}$ be the connected component of~$G_q$
  containing~$\gamma(x)$. Let $E$ be the elliptic curve given by~$x$.
  We distinguish two cases.

  First, assume that $E$ does not have~CM. Then $G_{q,x}$ is an infinite
  $(q+1)$-regular tree. The image under~$\gamma$ of the support
  of~$h_1(T_q)(x)$ is contained in the $d_1$-ball around~$\gamma(x)$
  and contains all the vertices at distance~$d_1$ from~$\gamma(x)$,
  whereas the image of the support of~$h_2(T_q)(x)$ is contained
  in the $d_2$-ball and (since $d_2 < d_1$) does not contain vertices
  at distance~$d_1$ from~$\gamma(x)$. Now
  consider a vertex~$v$ of~$G_x$ that has maximal
  possible distance from~$\gamma(x)$ among all vertices of the
  form~$\gamma(y)$ for a non-cuspidal point~$y$ in the support of~$D$.
  Let $y_1, \ldots, y_n$ be the points in the support of~$D$ such that
  $\gamma(y_j) = v$, and let $w \in G_x$ be a point at distance~$d_1$
  from~$v$ whose distance from~$\gamma(x)$ is larger by~$d_1$
  than that of~$v$. Each $h_1(T_q)(y_j)$ contains
  precisely one point~$y'_j$ such that $\gamma(y'_j) = w$, and these
  points are distinct for distinct points~$y_j$ (since the map from
  $X_H$ to the $j$-line is étale above the points in~$G_x$).
  Then $w$ is not of the form~$\gamma(z)$ for a point~$z$ in the support
  of~$h_2(T_q)(D)$. This shows that $t(D)$ has non-empty support,
  so $t(D)$ cannot be zero when $D$ has non-CM non-cuspidal points
  in its support.

  Now consider the case that $E$ has~CM by an order~$\calO$ in an
  imaginary quadratic field. If $q$ is inert in~$\calO$,
  then $G_{q,x}$ is a $(q+1)$-regular tree again, and we can argue
  as before. Otherwise, $G_{q,x}$ has the structure of a ``volcano'';
  see~\cite{sutherland-ants}. For a CM~elliptic curve over~$\C$,
  this volcano has infinite depth. Concretely, this means that it
  consists of a number of rooted $(q+1)$-regular trees whose roots
  form a cycle (of length~$\ge 1$). We can now argue
  as in the first case by choosing~$v$ to be a vertex of maximal
  level (i.e., distance from the root cycle) and~$w$ to be at
  distance~$d_1$ from~$v$ and level larger by~$d_1$. This shows that
  there can be no CM~points in the support of~$D$ as well.
  If $j(x) = 0$ (so $\disc(\calO) = -3$) or $j(x) = 1728$
  (so $\disc(\calO) = -4$), the structure of~$G_{q,x}$ is slightly
  different. It should be considered as a directed graph (with
  edges directed according to the direction of the isogeny);
  then the difference is that the edges pointing away from the
  root cycle have multiplicity~$3$ (resp.,~$2$), whereas all other
  edges are simple. The map from~$X_H$ to the $j$-line is étale
  away from the root cycle, so in particular at the vertex~$w$,
  and it is still true that $t(D)$ has positive coefficients
  at some points mapping to~$w$.

  The only points that we have not excluded from the support of~$D$
  are the cusps; this proves the claim.
\end{proof}


\section{Gonality of modular curves} \label{sec:gon}

One of our workhorse results, \cref{prop:hecke_gon}, is based on a lower
bound for the gonality of the modular curve we want to apply it to.
Recall the following definition.

\begin{definition}
  Let $X$ be a smooth projective and geometrically irreducible curve
  over a field~$k$. The \emph{$k$-gonality of~$X$}, $\gon_k(X)$, is
  the smallest degree of a non-constant rational function on~$X$
  defined over~$k$.
\end{definition}

Clearly, if $k \subseteq K$ is a field extension, then
$\gon_k(X) \ge \gon_K(X)$.

We write $X^{(d)}$ for the $d$th symmetric power of a curve~$X$.
Its points classify effective divisors of degree~$d$ on~$X$; in particular,
the points in~$X^{(d)}(K)$ correspond to $K$-rational effective divisors
of degree~$d$ on~$X$. We will identify effective divisors and points
on~$X^{(d)}$ in this paper without further mention. We write~$[D]$
for the linear equivalence class of a divisor~$D$.

The following is a trivial consequence of the definition above.

\begin{lemma} \label{lem:gon_0}
  Let $X$ be a smooth projective and geometrically irreducible curve
  over a field~$k$ and let $x, y \in X^{(d)}(k)$ be two points such that
  $[x-y] = 0$ in the Jacobian of~$X$. If $d < \gon_k(X)$, then $x = y$.
\end{lemma}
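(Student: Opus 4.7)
The plan is to argue by contradiction: assume $x \neq y$ and produce a non-constant rational function on $X$ of degree at most $d$ defined over $k$, contradicting $d < \gon_k(X)$.

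First I would translate the hypothesis $[x-y] = 0$ into the existence of an explicit rational function. Since $x$ and $y$ are $k$-rational points of $X^{(d)}$, the difference $x - y$ is a $k$-rational divisor of degree $0$ whose class in $J(X)$ is trivial. Hence there exists $f \in \bar{k}(X)^\times$ with $\mathrm{div}(f) = x-y$, and a standard Hilbert~90 argument (for each $\sigma \in \Gal(\bar{k}/k)$, the function $f^\sigma/f$ has trivial divisor, hence lies in $\bar{k}^\times$, giving a cocycle that is a coboundary) lets us choose $f$ in $k(X)^\times$.

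Next I would bound the degree of $f$ viewed as a morphism $X \to \P^1$. Write $x = c + x'$ and $y = c + y'$ where $c$ is the largest effective divisor dominated by both (the ``gcd''), so that $x'$ and $y'$ are effective and share no common point in their supports. Then $\mathrm{div}(f) = x' - y'$, and since $x \neq y$ with $\deg x = \deg y$ we have $x' \neq 0$ and $y' \neq 0$, in particular $f$ is non-constant. The polar divisor of $f$ is exactly $y'$, so the degree of $f$ as a map to $\P^1$ equals $\deg(y') \le \deg(y) = d$. This contradicts $\gon_k(X) > d$ and forces $x = y$.

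There is essentially no obstacle here — the statement is indeed an immediate unwinding of the definitions. The only point where some care is needed is the descent of $f$ from $\bar{k}$ to $k$, but this is standard, and for the application in this paper one could even sidestep it by working over $\bar{k}$, since gonality over a larger field is no larger. I would write the proof as one short paragraph combining the three steps above.
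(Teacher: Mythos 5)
Your proof is correct and follows the paper's own argument (translate $[x-y]=0$ into a function $f\in k(X)^\times$ with $\div(f)=x-y$, then observe $f$ non-constant would have degree $\le d<\gon_k(X)$); you just supply the details — the Hilbert~90 descent of $f$ to $k$ and the ``cancel the common part'' step bounding $\deg(f)$ — that the paper leaves implicit. One small caveat on your closing parenthetical: since $\gon_{\bar k}(X)\le\gon_k(X)$, the hypothesis $d<\gon_k(X)$ does \emph{not} in general give $d<\gon_{\bar k}(X)$, so one cannot ``sidestep the descent by working over $\bar k$'' for the lemma as stated; it only works in the paper's applications because the gonality lower bound actually comes from $\gon_{\C}$.
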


\begin{proof}
  The assumption $[x-y] = 0$ says that $x - y$, considered as a
  divisor of degree zero, is the divisor of some rational function
  $f \in k(X)^\times$. If $f$ were non-constant, it would follow that
  $\deg(f) \le d < \gon_k(X)$, a contradiction. So $f$ must be constant,
  which implies that $x - y = 0$.
\end{proof}

We need some information on the gonality of the curves~$X_H$.

\begin{theorem}[Yau, Abramovich, Kim-Sarnak] \label{thm:gon_XH}
  {\sloppy
  Let $p$ be a prime number and let \hbox{$H \subseteq (\Z/p\Z)^\times/\{\pm 1\}$}
  be a subgroup. Then
  \[ \gon_{\Q}(X_H) \ge \gamma \frac{p^2 - 1}{2 \#H}
     \qquad\text{with}\qquad
     \gamma = \frac{325}{2^{15}} .
  \]
  In particular,
  \[ \gon_{\Q}(X_1(p)) \ge \gamma \frac{p^2 - 1}{2}
     \qquad\text{and}\qquad
     \gon_{\Q}(X_0(p)) \ge \gamma (p+1) .
  \]}
\end{theorem}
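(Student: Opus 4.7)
The plan is to combine the three inputs named in the attribution. The backbone is Abramovich's gonality bound from \cite{abramovich}, which states
\[
  \gon_{\C}(X(\Gamma)) \ge \frac{\lambda_1(X(\Gamma))}{24} \cdot [\PSL_2(\Z) : \bar\Gamma]
\]
for any congruence subgroup $\Gamma \le \SL_2(\Z)$. Abramovich deduces this from the Li--Yau eigenvalue inequality $\lambda_1 \cdot \mathrm{vol}(X) \le 8\pi \deg(\phi)$ applied to the holomorphic map $\phi \colon X(\Gamma) \to \P^1(\C)$ defined by a gonality pencil, using that the hyperbolic volume of $X(\Gamma)$ equals $\tfrac{\pi}{3}[\PSL_2(\Z):\bar\Gamma]$.

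Into this I insert the Kim--Sarnak bound $\lambda_1 \ge \tfrac{975}{4096}$, valid for every congruence subgroup. It remains to identify the correct index. Let $\Gamma_H$ denote the congruence subgroup between $\Gamma_1(p)$ and $\Gamma_0(p)$ with $X_H = \Gamma_H \backslash \HH^*$. Then $[\PSL_2(\Z) : \bar\Gamma_0(p)] = p+1$, while the diamond operators realise $\bar\Gamma_0(p)/\bar\Gamma_H$ as $((\Z/p\Z)^\times/\{\pm 1\})/H$, of order $(p-1)/(2\#H)$. Multiplying,
\[
  [\PSL_2(\Z) : \bar\Gamma_H] = \frac{p^2-1}{2\#H}.
\]
Combined with the two analytic inputs this yields the claim over $\C$ with constant $\gamma = \tfrac{975}{4096 \cdot 24} = \tfrac{325}{2^{15}}$, and one then descends to $\Q$ using the inequality $\gon_{\Q}(X) \ge \gon_{\C}(X)$ noted just above the theorem.

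The two special cases drop out by specialisation: $H = \{1\}$ recovers $X_1(p)$ with $\#H = 1$, while $H = (\Z/p\Z)^\times/\{\pm 1\}$ recovers $X_0(p)$ with $\#H = (p-1)/2$, so $(p^2-1)/(2\#H)$ collapses to $p+1$.

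There is no genuine mathematical obstacle, since each ingredient is quoted directly from the literature. The one subtlety is keeping the normalisation of $\lambda_1$ in Abramovich's inequality consistent with that of the Kim--Sarnak bound (different sources differ by factors of $2$ or $4$ depending on whether one works with $\Gamma$, $\bar\Gamma$, or the degree-two cover $\HH \to X(\Gamma)$), so that the arithmetic genuinely produces $\gamma = 325/2^{15}$ and not a value off by such a factor.
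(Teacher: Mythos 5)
Your proposal is correct and follows essentially the same route as the paper: Abramovich's gonality bound in terms of $\lambda_1$ and the index in $\PSL_2(\Z)$, the Kim--Sarnak eigenvalue bound $\lambda_1 \ge 975/4096$, the computation $[\PSL_2(\Z):\bar\Gamma_H] = (p^2-1)/(2\#H)$ (equivalently the degree of $X_H$ over the $j$-line), and the trivial descent $\gon_{\Q} \ge \gon_{\C}$. Your closing remark about normalisation conventions for $\lambda_1$ is a reasonable caution, but since the arithmetic $975/(4096 \cdot 24) = 325/2^{15}$ matches the stated constant, no adjustment is needed.
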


\begin{proof}
  By~\cite{abramovich} and the fact that $(p^2-1)/(2\#H)$ is the degree of
  the map from~$X_H$ to the $j$-line, we have that
  \[ \gon_{\Q}(X_H) \ge \gon_{\C}(X_H) \ge \frac{\lambda_1}{24} \frac{p^2 - 1}{2 \#H} , \]
  where $\lambda_1$ is the smallest positive eigenvalue of the Laplace
  operator on~$X_H(\C)$, and by~\cite{kim-sarnak}, $\lambda_1 \ge 975/4096$.
\end{proof}

The argument given in Abramovich's paper is originally due to Yau (unpublished).

\begin{remark}
  Selberg's Conjecture says that $\lambda_1 \ge \tfrac{1}{4}$. If it holds, then
  we can take $\gamma = \tfrac{1}{96}$ in the bound above.
\end{remark}

Derickx and van Hoeij~\cite{derickx_hoeij} have determined the $\Q$-gonality
of~$X_1(N)$ for $N \le 40$. In these cases, the $\Q$-gonality is achieved
by a modular unit, i.e., a function on~$X_1(N)$ whose zeros and poles are
cusps. We propose the following conjecture (compare Question~1
in~\cite{derickx_hoeij}).

\begin{conjecture} \label{conj:gon}
  Let $p$ be a prime. There is a modular unit~$f$ defined over~$\Q$
  on~$X_1(p)$ such that $\gon_{\Q}(X_1(p)) = \deg f$.
\end{conjecture}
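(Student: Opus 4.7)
Since \cref{conj:gon} is a conjecture, this is a research program rather than a proof sketch. Let $\mu(p)$ denote the minimum degree of a non-constant modular unit on~$X_1(p)$ defined over~$\Q$. As modular units are particular rational functions, the inequality $\gon_{\Q}(X_1(p)) \le \mu(p)$ is immediate. All the content of the conjecture is in the reverse inequality.

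The first subgoal is to pin down~$\mu(p)$ combinatorially. By the Manin--Drinfeld theorem the cuspidal divisor class group of~$X_1(p)$ is finite, and nonzero modular units are exactly the rational functions whose divisors are supported on cusps; hence $\mu(p)$ equals the minimum of $\sum_{n_i > 0} n_i$ over nonzero principal divisors $\sum n_i (c_i)$ with $\Q$-rational support on the cusps. The $\Gal(\Qbar/\Q)$-action on the cusps of~$X_1(p)$ is explicitly known, so this is in principle a finite combinatorial computation; carrying it out should recover the values for $p \le 40$ found in~\cite{derickx_hoeij} and extend them.

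The hard subgoal is $\gon_{\Q}(X_1(p)) \ge \mu(p)$. Let $\phi \colon X_1(p) \to \P^1$ be a $\Q$-rational gonal map of degree $d = \gon_\Q(X_1(p))$ and choose fibers $D_1 = \phi^{-1}(\alpha)$, $D_2 = \phi^{-1}(\beta)$ over distinct $\alpha, \beta \in \P^1(\Q)$; these are $\Q$-rational effective divisors of degree~$d$ with $[D_1 - D_2] = 0$ in~$J_1(p)(\Q)$. The natural strategy is to apply a Hecke combination $t = h_1(T_q) - h_2(T_q)$ as in~\cref{prop:hecke_op_kernel} and use~\cref{prop:ann_rat_tors} to control rational torsion, aiming to force $D_1 - D_2$ to have cuspidal support. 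Symmetrizing over the diamond operators (which permute gonal pencils) should then let one descend~$\phi$ to a quotient $X_H$ and, one hopes, exhibit~$\phi$ itself as a modular unit.

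\textbf{Main obstacle.} The Hecke-annihilation method only controls torsion classes governed by Eisenstein-type relations; at present there is no technique to rule out hypothetical low-degree $g^1_d$'s whose fibers move in non-cuspidal families, for instance those arising from unexpected correspondences with low-gonality curves, or from the CM locus analyzed in~\cref{prop:hecke_op_kernel}. Quantitatively, the best unconditional lower bound from~\cref{thm:gon_XH} is of order $p^2$ with the tiny constant $\gamma = 325/2^{15}$, well below the expected value of~$\mu(p) \sim p^2/24$; even the full Selberg conjecture ($\gamma = 1/96$) leaves a substantial multiplicative gap. Closing it seems to require either a fundamentally new structural input about linear systems on modular curves or a sharper spectral bound than anything currently in sight, which is why the statement is left as a conjecture.
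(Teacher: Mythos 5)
This is a conjecture, not a theorem; the paper offers no proof, and you are right to recognize this and to present a research program with an honest obstacle analysis rather than a purported proof. Your observations are sound: $\gon_\Q(X_1(p)) \le \mu(p)$ is immediate, $\mu(p)$ is computable in principle via Manin--Drinfeld and the explicit Galois action on cusps, and all the content lies in the reverse inequality, which no one knows how to attack.

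One numerical correction worth making. You write that the ``expected value'' of $\gon_\Q(X_1(p))$ is $\sim p^2/24$; in fact $p^2/24$ is roughly the \emph{genus} of $X_1(p)$. The conjectural asymptotic for the gonality, coming from the modular unit upper bound the paper cites from~\cite{derickx_hoeij}, is $\gon_\Q(X_1(p)) \le \bigl\lfloor 11(p^2-1)/840 \bigr\rceil$, so the relevant constant is $11/840 \approx 0.0131$, not $1/24 \approx 0.0417$. With the corrected figure, the gap to the unconditional Yau/Abramovich/Kim--Sarnak lower bound (constant $325/2^{16} \approx 0.00496$) is a multiplicative factor of roughly~$2.6$, and under Selberg (constant $1/192 \approx 0.00521$) roughly~$2.5$. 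Your qualitative conclusion that even the Selberg-improved spectral bound leaves a substantial gap is exactly the point the paper itself makes right after stating the conjecture.

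One further caveat about the proposed strategy: the Hecke-annihilation machinery of \cref{prop:hecke_op_kernel} and \cref{prop:hecke_gon} is designed to show that \emph{individual} low-degree $\Q$-rational divisors avoiding the cusps must be pull-backs from quotients $X_H$, and it only applies to points in $X_1(p)^{(d)}(\Q)$ whose degree is strictly less than $\gon_\Q(X_1(p))/n$ for a suitable $n$. Applying it to the fibers $D_1, D_2$ of a putative gonal map $\phi$ of degree $d = \gon_\Q(X_1(p))$ would require $d < \gon_\Q(X_1(p))/n$, which is impossible. So as written, the method cannot even engage with a gonal pencil; any successful attack on the conjecture would need a genuinely different mechanism, which is consistent with your conclusion that the statement is left as a conjecture.
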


According to~\cite[p. 57]{derickx_hoeij}\footnote{Actually there it is mentioned
that $\gon_{\Q}(X_1(p)) \leq \left\lfloor \frac{11 p^2}{840} \right\rceil$,
but these values are equal as can be seen by studying the possible values of~$11 p^2$
modulo~$840$.},
we have the upper bound (writing $\lfloor a \rceil$ for the integer closest to~$a$)
\[ \gon_{\Q}(X_1(p)) \leq \left\lfloor \frac{11 (p^2-1)}{840} \right\rceil \,. \]

In fact, at the time of writing, only three values of~$p$ are known for which $X_1(p)$ has a
$\Q$-rational function of degree $< \left\lfloor \frac{11 (p^2-1)}{840} \right\rceil$.
These values are $\set{31, 67, 101}$, in which case a function of degree
$\left\lfloor \frac{11 (p^2-1)}{840} \right\rceil - 1$ is known according to
the table in~\cite{derickx_hoeij}.

In this light it is interesting to ask the question whether the limit
\[ \lim_{p \to \infty} \frac{\gon_{\Q}(X_1(p))}{p^2 - 1} \]
exists. And if it exists, wether it is close to the upper bound $\frac{11}{840} \approx 0.0131$
or to the lower bound $\frac{325}{2^{16}} \approx 0.00496$ (or $\frac{1}{192} \approx 0.00521$
under Selberg's Conjecture).

The known growth of the gonality of~$X_1(p)$ implies the following.

\begin{proposition} \label{prop:Sprime}
  There are constants $C_1' \ge C_1 > 0$ such that for all $d \ge 1$,
  \[ \primes(\sqrt{C_1 d + 1}) \subset S'(d) \subset \primes(\sqrt{C_1' d + 1}) \,. \]
\end{proposition}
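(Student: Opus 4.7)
The plan is to derive both inclusions from the bounds on the $\Q$-gonality of~$X_1(p)$: the lower bound in \cref{thm:gon_XH} gives the upper inclusion, and the upper bound $\gon_\Q(X_1(p)) \le \lfloor 11(p^2-1)/840 \rceil$ recalled above gives the lower inclusion. For the upper inclusion, suppose $p \in S'(d)$. Then $X_1(p)$ carries infinitely many non-cuspidal closed points of degree exactly~$d$ over~$\Q$ with distinct $j$-invariants, hence in particular infinitely many closed points of degree at most~$d$. For every $p \ge 13$ (so that $g(X_1(p)) \ge 2$), a theorem of Frey on curves with infinitely many points of bounded degree gives $\gon_\Q(X_1(p)) \le 2d$; combined with \cref{thm:gon_XH}, this yields $\gamma(p^2-1)/2 \le 2d$ and hence $p \le \sqrt{4d/\gamma + 1}$. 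The finitely many remaining small primes (where $g(X_1(p)) \le 1$) are then accommodated by enlarging~$C_1'$ beyond $4/\gamma$.

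For the lower inclusion, assume $p^2 - 1 \le C_1 d$. The gonality upper bound from~\cite{derickx_hoeij} gives $\gon_\Q(X_1(p)) \le d$ for any $C_1$ a little smaller than $840/11$, so~$X_1(p)$ admits a $\Q$-rational morphism to~$\P^1$ of some degree $e \le d$. To upgrade this to a morphism of degree \emph{exactly}~$d$, I would shrink~$C_1$ further so that $d \ge g(X_1(p)) + 1$; since $g(X_1(p)) \le (p^2-1)/24$, any $C_1 < 24$ suffices for all but boundedly many~$d$. Riemann--Roch then produces a divisor of degree~$d$ on~$X_1(p)$ supporting a base-point-free pencil, defining a $\Q$-rational morphism $\phi \colon X_1(p) \to \P^1$ of degree~$d$. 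Hilbert's irreducibility theorem applied to the degree-$d$ function-field extension defined by~$\phi$ then gives infinitely many $t \in \Q$ for which $\phi^{-1}(t)$ is a single closed point of~$X_1(p)$ of degree~$d$; since the $j$-map has finite fibers and there are only finitely many cusps, infinitely many of these points are non-cuspidal with pairwise distinct $j$-invariants, so $p \in S'(d)$. The boundedly many small values of~$d$ not covered above are absorbed by shrinking~$C_1$ so that $\primes(\sqrt{C_1 d + 1})$ already sits inside the explicit~$S'(d)$ values listed in the preceding proposition.

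The hard part is the promotion of a morphism of degree at most~$d$ to one of degree exactly~$d$ with irreducible generic fiber; this is what forces the constant~$C_1$ to be governed by the genus bound ($\approx 24$) rather than the gonality bound ($840/11 \approx 76.4$). For the upper inclusion, the mild delicacy is that Frey's $2d$ bound is needed (in place of the sharper Abramovich--Harris $d$ bound) in order to avoid separately excluding a dominant map from~$X_1(p)$ onto an elliptic curve of positive Mordell--Weil rank, which for large~$p$ is not easily ruled out by direct inspection of~$J_1(p)$.
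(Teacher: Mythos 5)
Your upper inclusion is exactly the paper's: Frey's theorem gives $\gon_\Q(X_1(p)) \le 2d$ whenever $X_1(p)$ has infinitely many degree-$\le d$ points, and combining with \cref{thm:gon_XH} yields $C_1' = 4\gamma^{-1}$. (The caveat about small genus is unnecessary, since the conclusion $\gon \le 2d$ is trivial there and the gonality lower bound holds for all primes~$p$, but it does no harm.)

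The lower inclusion has the right skeleton (Riemann--Roch to produce a $\Q$-rational function of exact degree~$d$, then Hilbert Irreducibility), but the threshold you use is incorrect and creates a genuine gap. You claim that $d \ge g(X_1(p)) + 1$ already yields a $\Q$-rational \emph{base-point-free} pencil of degree exactly~$d$, hence a constant near~$24$. This is false: for a $\Q$-rational divisor $D$ of degree~$d$ with $g+1 \le d < 2g-1$, the complete linear system $|D|$ may well have base points, so the induced map to $\P^1$ has degree $< d$; and producing a ``generic'' base-point-free $D$ of that degree over~$\Q$ (rather than over~$\C$, where Brill--Noether theory applies) is not something Riemann--Roch alone gives you. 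The standard remedy is exactly what the paper uses: take a rational cusp $P$ and $D = dP$; then for $d \ge 2g$ (and with a little care even $d \ge 2g-1$), Riemann--Roch shows $\ell(dP) > \ell((d-1)P)$ and that $|dP|$ has no base points, so one gets a morphism of exact degree~$d$. With $g(X_1(p)) \le (p^2-1)/24$, the condition $2g \le d$ is what forces $p^2 - 1 \le 12d$, i.e.\ $C_1 = 12$, not anything close to~$24$. Relatedly, your initial invocation of the gonality upper bound $\gon_\Q(X_1(p)) \le \lfloor 11(p^2-1)/840 \rceil$ is a red herring: it plays no role once you run the Riemann--Roch argument correctly, since that argument produces the degree-$d$ function directly and its constant constraint dominates. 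Once you replace $g+1$ by $2g$ and drop the gonality-upper-bound detour, your argument coincides with the paper's.
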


\begin{proof}
  By a result of Frey~\cite{frey}, $p \in S'(d)$ implies $\gon_{\Q}(X_1(p)) \le 2d$.
  Combined with~\cref{thm:gon_XH}, this gives the upper bound (with $C_1' = 4\gamma^{-1}$).
  Conversely, if $X$ is a curve of genus~$g$ over~$\Q$ with a rational point,
  then the Riemann-Roch Theorem implies that there are functions in~$\Q(X)^\times$
  of exact degree~$d$ for each $d \ge 2g-1$, so there are infinitely many points
  of degree~$d$ on~$X$ (this uses the Hilbert Irreducibility Theorem).
  Since the genus of~$X_1(p)$ is $\le (p^2 - 1)/24$, this gives the lower bound (with $C_1 = 12$).
\end{proof}


\section{A criterion for ruling out moderately large primes} \label{sec:modlarge}

In~\cite{DKSS}*{Prop.~7.1}, we gave a criterion in terms of the gonality
and the degree of a point on~$X_1(p)$ to arise as a pull-back from
an intermediate modular curve.
We extend this result to intermediate curves~$X_H$ and more general
not necessarily prime level~$N$, which we anticipate will be useful
for future applications (although we will only require the case~$X_1(p)$
with $p$~prime in this paper).

\begin{proposition} \label{prop:hecke_gon}
	Let $d \ge 1$, let $N$ be an integer, $\ell \nmid N$ a prime and let $H$ be a subgroup
	of~$(\Z/N\Z)^\times/\{\pm 1\}$ containing $-1$. Let $a \in \bigl((\Z/N\Z)^\times/\{\pm 1\}\bigr)/H$
	be such that
	\[ A \colonequals (\diamondop{a} - 1)(J_H(\Q)) \]
	is finite.
	When $\ell = 2$, we additionally assume that $A[2]$ is killed by $T_2 - \diamondop{2} - 2$
	(by~\cref{prop:ann_rat_tors}, this follows when $A[2]$ is contained in
	the subgroup generated by differences of rational cusps). We set
	\[ n = \begin{cases}
	          2\ell + 1 & \text{if $a \in \set{\ell, \ell^{-1}}$,} \\
	          2\ell + 2 & \text{if $a \notin \set{\ell, \ell^{-1}}$.}
	       \end{cases}
	\]
	Then any rational point on~$X_H^{(d)}$ of degree $d < \gon_{\Q}(X_H)/n$
	and without cusps in its support is a sum of orbits under~$\diamondop{a}$.
\end{proposition}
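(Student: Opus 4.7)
The goal is to show that any $\Q$-rational divisor $D \in X_H^{(d)}(\Q)$ with no cuspidal support satisfies $\diamondop{a}(D) = D$, which is exactly the condition that $D$ be a sum of $\diamondop{a}$-orbits. The strategy is as follows. Since $A$ is finite, $(\diamondop{a}-1)[D] \in A$ is torsion, and by~\cref{prop:ann_rat_tors} (together with the extra hypothesis on $A[2]$ in the case $\ell = 2$, to handle the $2$-part), we have $(T_\ell - \diamondop{\ell} - \ell)(\diamondop{a}-1)[D] = 0$ in~$J_H(\Q)$.

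Next, expand
\[ (T_\ell - \diamondop{\ell} - \ell)(\diamondop{a} - 1) = T_\ell\diamondop{a} + \diamondop{\ell} + \ell - T_\ell - \diamondop{\ell a} - \ell\diamondop{a} \]
and split it as $t_1 - t_2$ into its effective positive and negative parts. A direct calculation shows $\deg t_1 = \deg t_2 = n$, generically equal to $2\ell + 2$; when $a = \ell$, the terms $+\diamondop{\ell}$ and $-\ell\diamondop{\ell}$ combine into a single $-(\ell-1)\diamondop{\ell}$ on the negative side, and when $a = \ell^{-1}$ the terms $+\ell$ and $-\diamondop{\ell a} = -1$ combine into $+(\ell-1)$ on the positive side, lowering the common degree to $n = 2\ell + 1$ in both special cases. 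Applying $t_1$ and $t_2$ to~$D$ produces effective $\Q$-rational divisors of degree~$nd$ having the same class in~$J_H$ by the first step, so the bound $nd < \gon_\Q(X_H)$ together with~\cref{lem:gon_0} forces $t_1(D) = t_2(D)$, equivalently
\[ (T_\ell - \diamondop{\ell} - \ell)\bigl(\diamondop{a}(D) - D\bigr) = 0 \]
as a divisor on~$X_H$.

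Finally, apply~\cref{prop:hecke_op_kernel} with $h_1(x) = x$ (monic of degree~$1$) and $h_2(x) = \diamondop{\ell} + \ell$ (of degree~$0$): the displayed vanishing forces $\diamondop{a}(D) - D$ to be supported in the cusps. Since $D$ has no cuspidal support and $\diamondop{a}$ is an automorphism of~$X_H$ permuting the cusps among themselves, $\diamondop{a}(D)$ has no cuspidal support either; hence $\diamondop{a}(D) - D = 0$, as required.

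The most delicate step is the bookkeeping in the second paragraph, where the cancellations that sharpen $n$ from $2\ell + 2$ to $2\ell + 1$ in the cases $a \in \set{\ell, \ell^{-1}}$ must be identified cleanly from the expanded correspondence. A secondary subtlety is the $\ell = 2$ case of the first step: the hypothesis that $A[2]$ is killed by $T_2 - \diamondop{2} - 2$ only directly covers $2$-torsion, so some additional input about the structure of the finite group~$A$ is needed to conclude that the $2$-primary part of $(\diamondop{a}-1)[D]$ itself is annihilated.
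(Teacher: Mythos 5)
Your argument is correct and follows the same approach as the paper's proof: use \cref{prop:ann_rat_tors} to see that $T_\ell - \diamondop{\ell} - \ell$ kills $A$, decompose $(T_\ell - \diamondop{\ell} - \ell)(\diamondop{a} - 1)$ into effective correspondences $t_1, t_2$ of common degree~$n$ (with the cancellations when $a \in \{\ell, \ell^{-1}\}$ identified exactly as you do), apply \cref{lem:gon_0} to pass from linear equivalence to equality of divisors, and invoke \cref{prop:hecke_op_kernel} with $h_1 = x$, $h_2 = \diamondop{\ell} + \ell$ to conclude $\diamondop{a}(D) - D$ is cuspidal, hence zero.

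Regarding the $\ell = 2$ subtlety you flag at the end: you are right that ``$T_2 - \diamondop{2} - 2$ kills $A[2]$'' in the literal sense of $2$-torsion is not enough on its own, because $(\diamondop{a}-1)[D]$ need not lie in $A[2]$, and annihilation of $A[2]$ by an endomorphism does not in general force annihilation of the full $2$-primary part (consider $\Z/4$ with multiplication by $2$). The paper's own proof is no more explicit on this point than yours, so this is not a defect of your argument relative to the paper's. The hypothesis should be read as concerning the $2$-primary component of $A$ --- this is the condition that actually feeds into \cref{prop:ann_rat_tors} (write any $P \in A$ as a sum of its odd part and its $2$-primary part; the former is handled unconditionally), and it is what \cref{rmk:CES} verifies in practice (``the $2$-primary part of~$J_1(p)(\Q)_{\tors}$ is contained in the subgroup generated by differences of rational cusps''). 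With that reading, your first step is fully justified and the proof is complete.
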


\begin{remark}
  By~\cref{thm:gon_XH}, the inequality $d < \gon_{\Q}(X_H)/n$ above holds when
  $N = p$ is prime and
  \begin{equation*}\label{DS-inequality}
    d < \frac{325}{2^{15}} \, \frac{p^2 - 1}{2 n \cdot \#H} \,.
  \end{equation*}

  We note that when $N = p > 3$ is prime, we can always take $\ell = 3$; then $n \le 8$.
\end{remark}

\begin{proof}
  Compare the proof of~\cite{DKSS}*{Prop.~7.1}.
  Our assumptions together with~\cref{prop:ann_rat_tors} imply that
  $T_\ell - \diamondop{\ell} - \ell$ kills~$A$.
  Let $x \in X_H^{(d)}(\Q)$ without cusps in its support. Then as in~\cite{DKSS}
  we obtain the linear equivalence of effective divisors on~$X_H$
  \[ (\diamondop{a} T_\ell + \diamondop{\ell} + \ell) \cdot x
       \sim (T_\ell + \diamondop{\ell a} + \ell \diamondop{a}) \cdot x \,.
  \]
  If $a \in \{\ell, \ell^{-1}\}$, then we can cancel $\diamondop{\ell} x$
  or $x$, so that the divisors involved have degree $(2\ell + 1) d$; otherwise
  they have degree $(2\ell + 2) d$. In both cases, the degree is~$nd$.
  By~\cref{lem:gon_0}, it follows that both sides are equal as divisors,
  so
  \[ (T_\ell - \diamondop{\ell} - \ell) \cdot (\diamondop{a} x - x) = 0 \,. \]
  Then \cref{prop:hecke_op_kernel} implies that $\diamondop{a} x - x$ is supported
  on cusps. As $x$ does not contain cusps in its support by assumption, it follows
  that $\diamondop{a} x = x$, which is equivalent to saying that $x$ is a sum
  of orbits of~$\diamondop{a}$.
\end{proof}

Regarding the extra condition when $\ell = 2$ in the case $X_H = X_1(p)$,
we quote the following, which is Conjecture~6.2.2 in~\cite{CES}.

\begin{conjecture} \label{conj:CES}
  Let $p$ be a prime. Then the rational torsion subgroup
  of~$J_1(p)$ is generated by differences of rational cusps.
\end{conjecture}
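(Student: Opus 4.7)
The plan is to attack this along the lines of the Eisenstein-ideal method that Mazur used for~$J_0(p)$ and that has been partially extended to~$J_1(p)$ by Ohta and by Conrad-Edixhoven-Stein (who formulated the conjecture in the first place). Let $C \subseteq J_1(p)(\Q)_{\tors}$ denote the subgroup generated by differences of rational cusps, let $\T$ be the Hecke algebra (including the diamond operators) acting on~$J_1(p)$, and let $I \subseteq \T$ be the Eisenstein ideal generated by the operators $T_q - \diamondop{q} - q$ for primes $q \neq p$. The goal is then to show $J_1(p)(\Q)_{\tors} = C$, and the strategy is to bound both sides by the same $\T/I$-module.

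First I would describe $C$ itself. Kubert--Lang modular-unit theory produces all rational modular units on $X_1(p)$, so one can write down the rational cuspidal divisor class group together with its Galois descent; this gives the order of $C$ as an explicit product involving generalized Bernoulli numbers and shows that $C$ is annihilated by~$I$. The next ingredient is that $I$ annihilates the \emph{full} rational torsion $J_1(p)(\Q)_{\tors}$. For the odd part this is \cref{prop:ann_rat_tors}, so the substantive work is to handle the 2-primary part, e.g.\ by showing that a non-Eisenstein maximal ideal $\frm \subset \T$ of residue characteristic~$2$ has an irreducible mod-$2$ Galois representation attached to it (via Eichler--Shimura on~$J_1(p)[\frm]$), and that such a representation cannot have a nontrivial $\Gal(\Qbar/\Q)$-fixed line.

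The decisive step is then to prove that the inclusion $C \subseteq J_1(p)[I](\Q)$ is an equality. The natural approach is prime-by-prime: for each rational prime~$\ell$, one identifies the $\ell$-part of $J_1(p)[I]$ with an Eisenstein piece of the Jacobian, and compares its $\Q$-rational part with $C[\ell^\infty]$ via the Shimura subgroup and a careful extension-class computation in the style of Mazur's analysis for~$J_0(p)$. For primes $\ell$ coprime to $6 p \cdot \#C$ one expects this to go through almost directly, using reduction modulo a suitable auxiliary prime of good reduction to inject $J_1(p)(\Q)_{\tors}[\ell^\infty]$ into a finite group over~$\F_q$ and then killing the non-cuspidal part by an Eisenstein annihilator argument.

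The main obstacle, and the reason the statement is still a conjecture rather than a theorem, lies at the remaining primes $\ell$ dividing $6 p \cdot \#C$. There the local Hecke algebra $\T_\ell$ need no longer be étale, Eisenstein and non-Eisenstein maximal ideals can collide in the mod-$\ell$ fiber, and controlling the Eisenstein extension class requires detailed information about the integral model of~$J_1(p)$ at $\ell = p$ (through $p$-adic Hodge theory and deformations of the associated $\Gal(\Qbar_p/\Q_p)$-representation) as well as a subtle comparison between the cuspidal subgroup and the Shimura subgroup at small primes. Any complete proof will have to disentangle these cases individually; for the applications in this paper it would already be very useful to establish the conjecture at the prime~$2$, where only a modest strengthening of \cref{prop:ann_rat_tors} is needed.
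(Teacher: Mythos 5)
The statement you were asked to prove is stated in the paper as a \emph{conjecture} (Conjecture~6.2.2 of Conrad--Edixhoven--Stein), and the paper offers no proof of it; it only records, in the surrounding remark, that the conjecture has been verified for all primes $p \le 157$ by a combination of work in~\cite{CES}, \cite{DKSS}, and the thesis of De~Leo. So there is no proof in the paper to compare against. Your write-up is not a proof either, and you say as much: the final paragraph explicitly identifies the primes $\ell \mid 6p \cdot \#C$ as the place where the argument breaks down and concedes that ``the statement is still a conjecture rather than a theorem.'' What you have written is a reasonable and essentially accurate summary of the Eisenstein-ideal program (Kubert--Lang modular units to compute the cuspidal group, the Eisenstein ideal $I = (T_q - \diamondop{q} - q)$ annihilating both $C$ and the rational torsion, a prime-by-prime comparison in the spirit of Mazur), and it correctly flags where that program currently stalls. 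But a proof proposal that ends by explaining why the approach does not close is not a proof, and should not be presented under that heading.

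One concrete caution on content: your claim that $I$ annihilates the full rational torsion ``for the odd part by \cref{prop:ann_rat_tors}'' is fine, but the $2$-primary difficulty is not merely the technical nuisance of producing an irreducible mod-$2$ Eichler--Shimura representation at non-Eisenstein $\frm$; the serious problem, as the CES paper itself discusses, is the possible failure of multiplicity one and the non-\'etale structure of the Hecke algebra at $\ell = 2$ (and at $\ell = 3$ and $\ell = p$), which is exactly why CES only obtain index bounds like $2^6$, $2^4$, $2^{12}\cdot 3^2$ at the hard primes rather than equality. If you want to contribute something usable here, the productive target, as you note, is the weaker statement that the $2$-primary part of $J_1(p)(\Q)_{\tors}$ lies in $C$, which is all that \cref{prop:hecke_gon} actually requires.
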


\begin{remark} \label{rmk:CES}
  In~\cite{CES}, this is shown for all primes $p \le 157$ with
  the exception of
  \[ p \in \set{29, 97, 101, 109, 113} , \]
  and in these
  cases, they bound the index of the subgroup generated by differences
  of rational cusps by $2^6$, $17$, $2^4$, $3^7$, and $2^{12} \cdot 3^2$,
  respectively. In~\cite{DKSS}*{Thm.~3.2}, the conjecture is shown
  for $p = 29$. Davide De~Leo establishes the conjecture for the remaining
  open cases (for $p \le 157$) in his Master's thesis~\cite{DeLeo,DLS}.
  This implies that the $2$-primary part of~$J_1(p)(\Q)_{\tors}$
  is contained in the subgroup generated by differences of rational
  cusps for all $p \le 157$.
\end{remark}

We fix $N = p$ to be a prime. Then the orbits under~$\diamondop{a}$ on~$X_1(p)$ have
length~$\ord(a)$ (the order of~$a$ as an element of $(\Z/p\Z)^\times/\{\pm 1\}$) unless
\begin{enumerate}[$\bullet$]
  \item $3 \mid \ord(a)$, so necessarily $p \equiv 1 \bmod 6$, and the points
        map on~$X_0(p)$ to points corresponding to pairs $(E, C)$ with
        $j(E) = 0$ and $C$ the kernel of an element
        $\pi \in \End_{\C}(E) \simeq \Z[\omega]$ of norm~$p$, where
        $\omega$ is a primitive cube root of unity;
        such an orbit has length $\ord(a)/3$, or
  \item $2 \mid \ord(a)$, so necessarily $p \equiv 1 \bmod 4$, and the points
        map on~$X_0(p)$ to points corresponding to pairs $(E, C)$ with
        $j(E) = 1728$ and $C$ the kernel of an element
        $\pi \in \End_{\C}(E) \simeq \Z[i]$ of norm~$p$, where
        $i$ is a primitive fourth root of unity;
        such an orbit has length $\ord(a)/2$.
\end{enumerate}

We consider the case that $x \in X_1(p)^{(d)}(\Q)$ comes from a non-cuspidal
point of degree~$d$ on~$X_1(p)$, say corresponding to the pair~$(E, P)$
of an elliptic curve~$E$ defined over a number field~$K$ of degree~$d$
and a point $P \in E(K)$ of order~$p$, such that $K = \Q(E, P)$.
Then the conclusion of~\cref{prop:hecke_gon} implies that for every
$m \in \Z$ with $p \nmid m$ such that the image of~$m$
in~$(\Z/p\Z)^\times/\{\pm 1\}$ is in the subgroup generated by~$a$,
$(E, mP)$ is isomorphic to a Galois conjugate of~$(E, P)$.
This fact can be expressed as follows.
Recall that we can always take ($\ell = 3$ and) $n = 8$ in~\cref{prop:hecke_gon}.

\begin{corollary} \label{cor:medium_gen}
  Let $d \ge 1$ be an integer and let $p$ be a prime
  such that $8 d < \gon_{\Q}(X_1(p))$.
  Assume that there is some nontrivial subgroup $H$
  of~$(\Z/p\Z)^\times/\{\pm 1\}$ such that all simple factors of~$J_1(p)$
  of positive (analytic) rank are factors of~$J_H$.

  Let $x \in X_1(p)^{(d)}(\Q)$ be a point without cusps in its support.
  Then either $x$ contains points with $j$-invariant $0$ or~$1728$,
  or $d = m \cdot \#H$ for some integer~$m$ and $x$ arises as the pull-back
  of a point in~$X_H^{(m)}(\Q)$.
\end{corollary}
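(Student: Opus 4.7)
The plan is to apply~\cref{prop:hecke_gon} to $X_1(p)$ itself (i.e.\ with the subgroup in that proposition taken to be trivial), with $\ell = 3$ and $a = a_0$ a generator of the cyclic subgroup $H \subseteq (\Z/p\Z)^\times/\{\pm 1\}$ (cyclic because $(\Z/p\Z)^\times$ is). With $\ell = 3$ the parameter $n$ of~\cref{prop:hecke_gon} is at most~$8$, so the corollary's hypothesis $8d < \gon_\Q(X_1(p))$ gives $d < \gon_\Q(X_1(p))/n$, and the auxiliary condition for $\ell = 2$ is vacuous since $\ell$ is odd. The one remaining hypothesis that needs to be checked is that
\[ A \colonequals (\diamondop{a_0} - 1)(J_1(p)(\Q)) \]
is finite.

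To verify this, I would decompose $J_1(p) \sim \prod_f A_f$ into its simple isogeny factors, indexed by Galois orbits of weight-$2$ newforms of level~$p$ (there are no old factors since $p$ is prime). By a theorem of Kato, any $A_f$ with $L(A_f,1) \neq 0$ has $A_f(\Q)$ finite; contrapositively, every $A_f$ that contributes to the Mordell--Weil rank of $J_1(p)$ has positive analytic rank and is therefore a factor of $J_H$ by hypothesis. A simple factor of $J_1(p)$ isogenous to a factor of $J_H$ lies, up to isogeny, in the image of $\pi^* \colon J_H \to J_1(p)$, where $\pi \colon X_1(p) \to X_H$ is the quotient map; on this image every $\diamondop{h}$ with $h \in H$ acts as the identity. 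Hence $\diamondop{a_0}$ acts as the identity on every isogeny factor contributing to the rank of $J_1(p)(\Q)$, which forces $A \subseteq J_1(p)(\Q)_{\tors}$, a finite group.

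\cref{prop:hecke_gon} then yields that $x$ is a sum of orbits under $\diamondop{a_0}$, and since $a_0$ generates~$H$ these orbits are exactly the $H$-orbits on $X_1(p)$. Thus $x$ is $H$-invariant as a divisor and equals $\pi^* y$ for some effective $\Q$-rational divisor $y$ on $X_H$. By the discussion preceding the corollary, a non-cuspidal $H$-orbit on $X_1(p)$ has length $\#H$ except possibly above points of $j$-invariant $0$ or $1728$; the assumption on the support of $x$ therefore ensures that every orbit appearing in $x$ has length exactly $\#H$. It follows that $d = m \cdot \#H$ with $m = \deg y$, and $y \in X_H^{(m)}(\Q)$ is the required point.

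The main obstacle is the finiteness of $A$: this is where the hypothesis on analytic ranks does its work, via Kato's theorem, to yield the Mordell--Weil-rank statement that~\cref{prop:hecke_gon} requires. The remainder of the argument---the descent to $X_H$ and the orbit-length bookkeeping---is essentially formal given the structure of the diamond action on $X_1(p)$.
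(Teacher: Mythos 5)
Your proof is correct and follows essentially the same route as the paper's. The only difference is that you spell out why $A$ is finite (via Kato's theorem, the decomposition of $J_1(p)$ into simple factors~$A_f$, and the triviality of~$\diamondop{a_0}$ on factors of~$J_H$), whereas the paper merely remarks that this is ``by assumption''.
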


\begin{proof}
  We take $a$ to be a generator of (the cyclic group)~$H$; then
  $A$ in~\cref{prop:hecke_gon} is finite by assumption. The gonality
  condition in~\cref{prop:hecke_gon} is also satisfied by assumption.
  So $x$ is a sum of $H$-orbits. If the support of~$x$ does not
  contain points with $j$-invariant $0$ or~$1728$,
  then all these $H$-orbits in~$x$ have length~$\#H$, which shows that
  $d = m \cdot \#H$ for some integer~$m$. This also implies that $x$ is
  the pull-back of a point in~$X_H^{(m)}(\Q)$.
\end{proof}

We would like to take $H = (\Z/p\Z)^\times/\{\pm 1\}$. We can do that
unless there is a positive-rank factor of~$J_1(p)$ that corresponds to
an orbit of newforms with nontrivial character. Such an orbit exists
by definition if and only if $p$ is strange in the sense
of~\cref{sec:strange} below. We will come back to the implications
for~$S(d)$ in~\cref{Sect:appl_strange}.


\section{Strange primes} \label{sec:strange}

In view of the criterion discussed in~\cref{sec:modlarge}, we
make the following definition.
We state it for general levels~$N$, even though we will be only concerned
with prime levels in this paper.

\begin{definition} \label{def:strange}
  Let $N \ge 1$ be an integer, $\chi$ a Dirichlet character of modulus~$N$
  and let $f$ be a newform for~$\Gamma_1(N)$ of weight~$2$ and character~$\chi$.
  \begin{enumerate}[(1)]
  	\item The newform~$f$ is \emph{strange}, if $\chi$ is
          nontrivial and $L(f, 1) = 0$, i.e., the analytic rank of the associated
          abelian variety~$A_f$ is positive.
    \item The character~$\chi$ is \emph{strange}, if there exists a divisor $M \mid N$
          and a strange newform~$f$ of weight~$2$ on~$\Gamma_1(M)$ with character~$\chi'$
          such that $\chi$ is the induction of~$\chi'$.
    \item $N$ is \emph{strange} if there is a strange newform whose level divides~$N$.
    \item The \emph{strangeness} $\str(N)$ of~$N$ is the order of the group generated
          by all strange characters mod~$N$. In particular, $\str(N) > 1$ if and only if $N$ is strange.
    \item The \emph{new strangeness dimension} $\sdim_{\text{new}}(N)$ of~$N$ is
          the number of strange newforms at level~$N$.
    \item The \emph{strangeness dimension} $\sdim(N)$ is defined as
          \[ \sdim(N) = \sum_{M\mid N} \tau(N/M) \sdim_{\text{new}}(M) \,, \]
          where $\tau(n)$ denotes the number of divisors of~$n$.
          In particular, $\sdim(N) > 0$ if and only if $N$ is strange.
  \end{enumerate}
\end{definition}

The formula for the definition of $\sdim$ in terms of $\sdim_{\text{new}}$ is motivated
by the Atkin-Lehner-Li decomposition
\[ S_k(\Gamma_1(N)) \cong \bigoplus_{M \mid N} \bigoplus_{d \mid N/M} S_K(\Gamma_1(M))_{\text{new}} \,. \]
It counts a strange newform of level $M \mid N$ with the multiplicity it occurs in~$S_2(\Gamma_1(N))$.

Since there are no newforms of level $1$ and weight $2$ it follows that for a prime~$p$ one has
$\sdim(p) = \sdim_{\text{new}}(p)$.

The \href{http://www.lmfdb.org/ModularForm/GL2/Q/holomorphic/?level_type=prime&weight=2&char_order=2-&analytic_rank=1-&search_type=List}{LMFDB}
has complete data of all newforms of weight~$2$ up to level~$1000$.
In this range, there are only nine strange primes~$p$, and for each
of these primes, there is exactly one Galois orbit of strange newforms. They are given
in~\cref{table:newforms}. The column labeled ``$\ord(\chi)$'' gives the order
of the associated character, which is equal to the index of the largest
subgroup~$H$ we can take in~\cref{cor:medium_gen} for the prime~$p$.
We also give the order of~$H$.

\begin{table}[htb]
\[ \renewcommand{\arraystretch}{1.2}
  \begin{array}{|c|c|c|c|} \hline
    \text{level} & \ord(\chi) & \dim A_f & \#H \\\hline
      \text{\href{http://www.lmfdb.org/ModularForm/GL2/Q/holomorphic/61/2/f/a/}{61}}   &  6 & 2 &  5 \\
      \text{\href{http://www.lmfdb.org/ModularForm/GL2/Q/holomorphic/97/2/g/a/}{97}}   & 12 & 4 &  8 \\
      \text{\href{http://www.lmfdb.org/ModularForm/GL2/Q/holomorphic/101/2/e/a/}{101}} & 10 & 4 &  5 \\
      \text{\href{http://www.lmfdb.org/ModularForm/GL2/Q/holomorphic/181/2/f/a/}{181}} &  6 & 2 & 15 \\
      \text{\href{http://www.lmfdb.org/ModularForm/GL2/Q/holomorphic/193/2/g/a/}{193}} & 12 & 4 &  8 \\
      \text{\href{http://www.lmfdb.org/ModularForm/GL2/Q/holomorphic/409/2/g/a/}{409}} & 12 & 4 & 17 \\
      \text{\href{http://www.lmfdb.org/ModularForm/GL2/Q/holomorphic/421/2/f/a/}{421}} &  6 & 2 & 35 \\
      \text{\href{http://www.lmfdb.org/ModularForm/GL2/Q/holomorphic/733/2/e/a/}{733}} &  6 & 2 & 61 \\
      \text{\href{http://www.lmfdb.org/ModularForm/GL2/Q/holomorphic/853/2/e/a/}{853}} &  6 & 2 & 71 \\\hline
  \end{array}
\]
\caption{Newforms~$f$ of weight~$2$, nontrivial character~$\chi$ and prime level
         $p < 1000$ such that $L(f, 1) = 0$.} \label{table:newforms}
\end{table}

We have extended the range of the LMFDB data by a computation, as follows.
Let $p$ be the prime we want to test for strangeness. Pick a (reasonably
large, but not too large, say below~$2^{30}$) prime $q \equiv 1 \bmod p-1$.
Working mod~$q$, all the Dirichlet characters mod~$p$ take values
in~$\F_q^\times$. So we can compute the space of modular symbols
over~$\F_q$ associated to any given (even) Dirichlet character~$\chi$
mod~$p$ (we take the subspace fixed by the star involution). We know that
$(T_\ell - \ell \diamondop{\ell} - 1)(T_\ell - \diamondop{\ell} - 1)$
maps the modular symbol $-\set{0, \infty}$ into the cuspidal subspace,
for every prime~$\ell$. We find the first~$\ell$ such that the resulting
element is nonzero (almost always $\ell = 2$). Write $\bfe'$ for this
element of the cuspidal subspace. Then we find the smallest
prime~$\ell'$ such that $T_{\ell',\F_q}$ has squarefree characteristic
polynomial on the cuspidal subspace. We then check whether
$\F_q[T_{\ell'}] \cdot \bfe'$ is the full cuspidal subspace. If it is, then
the projection of the winding element~$\bfe$ into any of the newform
spaces associated to~$\chi$ (over~$\Q$) is nonzero, and it follows
that $\chi$ is not strange. If, on the other hand, we obtain a smaller
subspace, then it is quite likely that $\chi$ is indeed strange
(since $q$ is taken to be reasonably large); to rigorously prove that,
we perform a similar (but much slower) computation in characteristic zero.

Drew Sutherland, using code written by the first author of this paper,
found all candidates for strange characters modulo primes $p < 10^5$.
The second author used independently written code to corroborate these
results for $p < 50\,000$ and to verify that candidates for strange
characters are indeed strange. These computations were done using the
Modular Symbols functionality of Magma~\cite{Magma}.
They result in the following.

\begin{proposition} \label{prop:strange_primes}
  Let $p < 100\,000$ be a prime and
  let $\chi$ be an even Dirichlet character modulo~$p$.
  The character~$\chi$ is strange if and only if $p$ and
  the order of~$\chi$ are listed in~\cref{table:strange_primes}.
\end{proposition}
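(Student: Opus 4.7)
The claim is a computational assertion: for every prime $p < 100\,000$ and every even Dirichlet character $\chi$ modulo~$p$, one must decide whether $\chi$ is strange. My plan is to formalize the algorithm sketched in the paragraphs preceding the proposition and to explain why its verdict is rigorous, so that the statement follows from the recorded output of that algorithm. Since $S_2(\Gamma_1(1)) = 0$, any strange newform of level dividing~$p$ has level exactly~$p$, and its nebentypus then equals (rather than merely induces) the candidate strange character. So the task reduces to deciding, for each even $\chi$ modulo~$p$, whether some newform $f \in S_2(\Gamma_1(p), \chi)$ satisfies $L(f, 1) = 0$. Writing $\bfe = -\set{0, \infty}$ for the winding element in the plus part of the space of weight-$2$ modular symbols for~$\Gamma_1(p)$, and $V_\chi$ for the cuspidal $\chi$-isotypic component of that plus part, standard modular-symbol theory identifies $L(f, 1)$ with a nonzero scalar times the pairing of~$f$ against~$\bfe$. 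Hence $\chi$ is \emph{not} strange if and only if the projection~$\bfe_\chi$ of~$\bfe$ to~$V_\chi$ generates~$V_\chi$ as a module over the Hecke algebra~$\T_\chi$ that acts on it.

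To make this feasible for $p$ close to~$10^5$, I would avoid linear algebra over cyclotomic rings by choosing a prime $q \equiv 1 \pmod{p-1}$, which forces every character modulo~$p$ to take values in~$\F_q^\times$, and work throughout with $\F_q$-valued modular symbols on the plus subspace (admissible since $\chi$ is even). Apply the operator $(T_\ell - \ell\diamondop{\ell} - 1)(T_\ell - \diamondop{\ell} - 1)$ to~$\bfe$ for the smallest prime~$\ell$ yielding a nonzero image, so as to land inside the cuspidal subspace; call the result~$\bfe'$. Then pick the smallest prime~$\ell'$ for which $T_{\ell'}$ has squarefree characteristic polynomial on the cuspidal subspace, so that $\F_q[T_{\ell'}]$ equals the full mod-$q$ Hecke algebra, and test whether $\F_q[T_{\ell'}] \cdot \bfe'$ exhausts the cuspidal subspace of the $\chi$-component.

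If the cyclic module $\F_q[T_{\ell'}] \cdot \bfe'$ fills up the cuspidal subspace, then $\bfe_\chi$ generates its ambient space modulo~$q$ and hence over~$\Z$ up to a finite index coprime to~$q$, so $\chi$ is definitively not strange. A negative answer modulo~$q$ only flags~$\chi$ as a \emph{candidate}, because the shortfall could be an accident at~$q$; each candidate must then be reprocessed in characteristic zero to obtain a rigorous certificate of strangeness. Running this protocol on every pair $(p, \chi)$ with $p < 100\,000$ should produce exactly the list of~\cref{table:strange_primes}, verified by two independent Magma implementations: Sutherland's, using code by the first author, together with an independent re-implementation by the second author.

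Nothing here is mathematically deep; the difficulty is entirely computational. The cuspidal spaces in play have dimension of order~$p^2$, so the linear algebra over~$\F_q$ dominates the runtime, and care is needed to decompose by character in order to keep individual matrices manageable. The mod-$q$ reduction is precisely what makes a sweep to $10^5$ tractable, and the presence of two independent implementations is what gives confidence that the final list in~\cref{table:strange_primes} is complete and correct.
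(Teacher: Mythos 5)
Your proposal reproduces the paper's own method: the same choice of auxiliary prime $q \equiv 1 \pmod{p-1}$, the same reduction to $\F_q$-valued modular symbols, the same use of $(T_\ell - \ell\diamondop{\ell} - 1)(T_\ell - \diamondop{\ell} - 1)$ to land in the cuspidal subspace, the same cyclicity test via a Hecke operator with squarefree characteristic polynomial, the same one-sided rigor of the mod-$q$ step with characteristic-zero confirmation of candidates, and the same double implementation in Magma. The only thing omitted is that the paper's independent corroboration by the second author covered $p < 50\,000$ rather than the full range; otherwise this matches the paper's argument.
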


\begin{table}[htb]
\[ \renewcommand{\arraystretch}{1.2}
   \setlength{\doublerulesep}{5pt} 
  \begin{array}{|r|*{9}{c|}} \hline
    p          &    61 &    97 &   101 &   181 &   193 &   409 &   421 &   733 &   853 \\\hline
    \ord(\chi) &     6 &    12 &    10 &     6 &    12 &    12 &     6 &     6 &     6 \\\hline
    \sdim(p)   &     2 &     4 &     4 &     2 &     4 &     4 &     2 &     2 &     2 \\\hline
    \hline
    p          &  1021 &  1777 &  1801 &  1861 &  2377 &  2917 &  3229 &  3793 &  4201 \\\hline
    \ord(\chi) &    30 &     3 &     5 &     6 &    12 &     6 &     3 &    12 &     3 \\\hline
    \sdim(p)   &     8 &     2 &     4 &     6 &     4 &     2 &     2 &     4 &     2 \\\hline
    \hline
    p          &  4733 &  5441 &  5821 &  5953 &  6133 &  6781 &  7477 &  8681 &  8713 \\\hline
    \ord(\chi) &     7 &    10 &     6 &     3 &     6 &     6 &    14 &    10 & 4, 12 \\\hline
    \sdim(p)   &     6 &     4 &     2 &     2 &     2 &     2 &     6 &     4 & 4 + 4 \\\hline
    \hline
    p          & 10093 & 11497 & 12941 & 14533 & 15061 & 15289 & 17041 & 17053 & 17257 \\\hline
    \ord(\chi) &     6 &     3 &    10 &     6 &     6 &    12 &     3 &     6 &    12 \\\hline
    \sdim(p)   &     2 &     2 &     4 &     2 &     4 &     4 &     2 &     2 &     4 \\\hline
    \hline
    p          & 18199 & 20341 & 22093 & 23017 & 23593 & 26161 & 26177 & 28201 & 29569 \\\hline
    \ord(\chi) &     3 &     6 &     6 &    12 &    12 &     3 &     4 &     3 &     2 \\\hline
    \sdim(p)   &     4 &     2 &     2 &     4 &     4 &     2 &     4 &     2 &     2 \\\hline
    \hline
    p          & 31033 & 31657 & 32497 & 35521 & 35537 & 36373 & 39313 & 41081 & 41131 \\\hline
    \ord(\chi) &     3 &     3 &     3 &     3 &     4 &     6 &    12 &     5 &     3 \\\hline
    \sdim(p)   &     2 &     2 &     2 &     2 &     4 &     2 &     4 &     4 &     2 \\\hline
    \hline
    p          & 41593 & 42793 & 48733 & 52561 & 52691 & 53113 & 53857 & 63313 & 63901 \\\hline
    \ord(\chi) &    12 &     3 &     6 &     3 &     5 &    12 &    12 &    12 &     6 \\\hline
    \sdim(p)   &     4 &     2 &     2 &     2 &     4 &     4 &     4 &     4 &     2 \\\hline
    \hline
    p          & 65171 & 65449 & 66973 & 68737 & 69061 & 69401 & 69457 & 73009 & 86113 \\\hline
    \ord(\chi) &     5 &    12 &     6 &    12 &     6 &     5 &     4 &    12 &    12 \\\hline
    \sdim(p)   &     4 &     4 &     2 &     4 &     2 &     4 &     4 &     4 &     4 \\\hline
    \hline
    p          & 86161 & 96289 &       &       &       &       &       &       &       \\\hline
    \ord(\chi) &     4 &    12 &       &       &       &       &       &       &       \\\hline
    \sdim(p)   &     4 &     4 &       &       &       &       &       &       &       \\\hline
  \end{array}
\]
\caption{Strange primes $p < 100\,000$, orders of strange characters mod~$p$,
         and the strangeness dimension of~$p$.}
\label{table:strange_primes}
\end{table}

There are just~$74$ strange primes up to~$10^5$.
Why should we expect strange primes to be rare? The $L$-series of
a newform~$f$ for~$\Gamma_0(p)$ satisfies a functional equation
$L(f, 2-s) = \pm L(f, s)$ with sign the negative of the eigenvalue
of~$f$ under the Fricke involution~$w_p$.
So the analytic rank must be odd when $f$ is invariant under~$w_p$.
When the character~$\chi$ of~$f$ is nontrivial, however, the coefficient field
of~$f$ is no longer totally real, but instead totally complex
(this follows from the relation $a_\ell = \chi(\ell) \bar{a}_\ell$ for primes
$\ell \neq p$; see~\cite{ribet1976}*{page~21}), and
the functional equation has the form $L(\bar{f}, 2-s) = \eps L(f, s)$
with $|\eps| = 1$. So the functional equation does not force a zero
at $s = 1$, and we can expect it to be rare for a zero to occur,
and particularly so when the Galois orbit of~$f$ is large.

One can also use similar heuristics as in Section~\ref{sec:X0} below:
A strong version of the analogue of Maeda's Conjecture for the newforms
of level~$p$ with a given non-trivial character would say that there should
only be few small Galois orbits and one very large one (which is consistent
with experimental observations), and there may be
reason to believe that the total size of the small orbits grows only
slowly or is even uniformly bounded. An analogue of the result of
Iwaniec and Sarnak mentioned in Section~\ref{sec:X0} would then imply
that $\sdim(p)$ is bounded by the total size of the small orbits.

Based on the data and heuristics above, we propose the following conjecture.

\begin{conjecture} \label{conj:str_unif} \strut
  \begin{enumerate}[\upshape(1)]
    \item \label{conj:str_unif_3}
          (Weak form)
          \[ \lim_{p \to \infty} \frac{\sdim(p)}{\log p} = 0 \]
          as $p$ runs through all prime numbers.
    \item \label{conj:str_unif_2}
          (Strong form)
          The strangeness dimension~$\sdim(p)$ is uniformly bounded
          as $p$ runs through all prime numbers.
  \end{enumerate}
\end{conjecture}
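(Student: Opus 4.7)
The plan is to reduce the conjecture to a Maeda-type statement about Hecke-Galois orbits of newforms, combined with an analytic non-vanishing input at the central critical value in the spirit of Iwaniec-Sarnak.

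Since there are no weight~$2$ newforms of level~$1$, the Atkin-Lehner-Li sum in \cref{def:strange} collapses to $\sdim(p) = \sdim_{\text{new}}(p)$ at prime level, so it suffices to bound the number of strange newforms of exact level~$p$. I would partition these newforms by the $\Gal(\Qbar/\Q)$-orbit of their nebentypus character~$\chi$. Because the abelian variety $A_f$ depends only on the Galois orbit of~$f$, and the analytic rank of $A_f$ equals the order of vanishing of $\prod_\sigma L(\sigma f, s)$ at $s=1$, strangeness is a property of the Hecke-Galois orbit, not of an individual newform: an entire orbit is either strange or not.

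Next, invoke a Maeda-type hypothesis for character newforms: for each Galois orbit of non-trivial even characters~$\chi$ mod~$p$, the Hecke algebra on $S_2(\Gamma_0(p),\chi)^{\text{new}}$ should be, up to a small exceptional locus, a single number field, so that $S_2(\Gamma_0(p),\chi)^{\text{new}}$ consists of one dominant Hecke-Galois orbit of dimension comparable to $\dim S_2(\Gamma_0(p),\chi)^{\text{new}}$ together with a bounded (or slowly growing) number of small exceptional orbits. Then apply an Iwaniec-Sarnak-style non-vanishing theorem at $s=1$: because $\chi$ is non-trivial, the functional equation $L(\bar f, 2-s) = \eps L(f,s)$ places no forced zero at the central point, and a mollifier combined with first- and second-harmonic-moment estimates over the family should show that a positive proportion of newforms in the dominant orbit have $L(f,1) \neq 0$. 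Together with orbit-invariance of strangeness, this rules out the dominant orbit as a contributor to~$\sdim(p)$, so the problem reduces to bounding the total dimension of the small exceptional orbits, summed over all Galois classes of non-trivial even characters mod~$p$.

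The weak form of the conjecture then follows if this total is $o(\log p)$, and the strong form if it is uniformly bounded. The hard part will be the Maeda-type input: classical Maeda for $S_k(\SL_2(\Z))$ has resisted attack for decades, and the non-trivial-character analogue at prime level is essentially untouched, so an unconditional proof of even the weak form appears out of reach. In contrast, the analytic non-vanishing step, once one has isolated a single large Hecke-Galois orbit to work inside, is plausibly within reach of existing mollifier techniques for $\GL_2$ $L$-functions. Any realistic first result will therefore be conditional, most naturally on a suitable Maeda-type hypothesis together with the analytic estimates (Rankin-Selberg bounds, perhaps GRH on average) needed to execute the mollifier method; the numerical data in \cref{table:strange_primes} is the principal evidence that such a conditional bound should hold.
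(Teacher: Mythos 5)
This statement is a \emph{conjecture} in the paper, supported only by heuristics and numerical data; there is no proof to compare against. Your heuristic reasoning --- a Maeda-type hypothesis that $S_2(\Gamma_0(p),\chi)^{\new}$ for each nontrivial even $\chi$ consists of one dominant Hecke-Galois orbit plus a bounded (or slowly growing) total of small exceptional orbits, combined with an Iwaniec-Sarnak-style non-vanishing argument at $s=1$ to rule out the dominant orbit contributing --- is essentially the same chain of ideas the paper itself presents at the end of Section~\ref{sec:strange} and again, with the trivial-character analogue spelled out, in Section~\ref{sec:X0}. You are also appropriately explicit that this is a plausibility argument rather than a proof, which is exactly the paper's own stance. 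One small caveat worth flagging: you assert that an entire Hecke-Galois orbit is either strange or not, which presupposes Galois invariance of the analytic rank of newforms; as the paper notes in Section~\ref{sec:X0}, this is currently known only for analytic rank $0$ or $1$. Since strangeness as literally defined is $L(f,1)=0$ for an individual newform, the reduction to an orbit-level statement is itself (mildly) conjectural, though it does not affect the overall heuristic picture.
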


Part~\eqref{conj:str_unif_2} of this conjecture implies that $\str(p)$
is uniformly bounded for all primes~$p$, since the size of the Galois
orbit of a strange newform is a multiple of~$\varphi(\ord(\chi))$,
where $\chi$ is the associated strange character and $\varphi$ is
the Euler totient function. So a bound on~$\sdim(p)$ implies a
bound on~$\ord(\chi)$ and therefore also a bound on~$\str(p)$.

The strangeness dimensions of all primes below~$10^5$ are bounded
by~$8$ (which occurs only twice, $6$ occurs three times, and all
other strangeness dimensions are at most~$4$; this holds for $p > 10\,000$),
and the characters have order bounded by~$30$
(occurring once; $14$ occurs once, all other orders are at most~$12$).
From the data, it therefore appears to be possible that
\[ \max_p \str(p) = 30 \qquad\text{and}\qquad \limsup_{p \to \infty} \str(p) = 12 \]
and that
\[ \max_p \sdim(p) = 8 \qquad\text{and}\qquad \limsup_{p \to \infty} \sdim(p) = 4 . \]


\section{A source of strange primes} \label{sec:strange_examples}

In the following, we describe a source of strange primes~$p$ such that
the character of the strange newforms has order~$6$ and the Galois orbit
of these newforms has length~$2$. This case occurs a number of times
in~\cref{table:strange_primes}, including for fairly large primes.

Consider a curve~$X$ of genus~$2$ over~$\Q$ such that $X$ has an
automorphism~$\sigma$
of order~$3$ defined over~$\Q$. Denote the Jacobian variety of~$X$ by~$J$.
Since the hyperelliptic involution~$\iota$ of~$X$ is in the center
of the automorphism group, $\sigma$ induces an automorphism of
order~$3$ of~$\P^1_{\Q}$. We can take this automorphism to be given
by $x \mapsto \frac{1}{1-x}$. Then the curve~$X$ has a model of the form
\begin{equation} \label{eqn:curveX}
  y^2 = r F_1(x,z)^2 + s F_1(x,z) F_2(x,z) + t F_2(x,z)^2
\end{equation}
with $r, s, t \in \Z$ and
\[ F_1(x,z) = x z (x-z) \qquad\text{and}\qquad F_2(x,z) = x^3 - x^2 z - 2 x z^2 + z^3 . \]
The action of~$\sigma$ is $(x : y : z) \mapsto (z : y : z-x)$;
$\sigma$ has four fixed points (satisfying $x^2 - x z + z^2 = 0$),
so the quotient curve $X/\langle \sigma \rangle$ has genus~$0$.
This implies that all divisors of the form $P + \sigma(P) + \sigma^2(P)$
(where $P$ is a point on~$X$) are linearly equivalent.
This means that $\sigma^2 + \sigma + 1 = 0$ in $\End(J)$,
so that $\Z[\omega] \subseteq \End_\Q(J)$, where $\omega$ is a primitive
cube root of unity. In particular, if $J$ is simple, then $J$ is
an abelian surface of $\GL_2$-type and therefore occurs as a simple
factor of~$J_1(N)$ for some~$N$ such that the conductor of~$J$ is~$N^2$;
see~\cites{ribet,khare-wintenberger1,khare-wintenberger2}.
Since the endomorphism algebra contains the CM~field~$\Q(\omega)$, the associated character
must be nontrivial. So if we can arrange for~$J$ to have conductor~$p^2$
for some prime~$p$ and to have positive (analytic) rank, then the
associated pair of newforms for~$\Gamma_1(p)$ will be strange.

The discriminant of the model of~$X$ given in~\eqref{eqn:curveX} above is
\[ \Delta(r,s,t)
    = 2^8 (s^2 - 4 r t)^3 \bigl(\tfrac{1}{4}((2r + s - 13t)^2 + 27 (s + t)^2)\bigr)^2 .
\]
If the right hand side in~\eqref{eqn:curveX} is a square modulo~$4$,
then we can ``un-complete the square'' and get a new model that is
still integral and whose discriminant is $\Delta(r,s,t)/2^{20}$.
If we assume that not all of $r, s, t$ are divisible by~$4$, then
we are in this case exactly when
\[  (r, s, t) \equiv (0, 0, 1), \quad (1, 0, 0) \quad\text{or}\quad (1, 2, 1) \bmod 4 . \]
In this case, $2^4 \mid s^2 - 4 r t$ (and the last factor in the
expression for~$\Delta(r,s,t)$ above is an integer).

Let $p \equiv 1 \bmod 6$ be a prime.
To get a curve~$X$ with (minimal) discriminant $\pm p^2$, we can
set
\[ s^2 - 4 r t = \pm 2^4 \qquad\text{and}\qquad
   (2r + s - 13t)^2 + 27 (s + t)^2 = 4 p .
\]
Then, up to perhaps a common sign change, $(r,s,t)$ satisfy one
of the congruences above, and we do obtain a curve with
discriminant~$\pm p^2$. Since $p \equiv 1 \mod 3$, we can always
write $4p = u^2 + 27 v^2$, and $u$ and~$v$ are uniquely determined
up to sign. Expressing $r$ and~$s$ in terms of $u$, $v$ and~$t$,
the first equation becomes
\[ 27 t^2 + 2 u t - v^2 \pm 2^4 = 0 . \]
The discriminant of this quadratic equation in~$t$ is
\[ 4(u^2 + 27 v^2 \mp 2^4 \cdot 27) = 4^2 (p \mp 108) . \]
So if there is to be a solution, $p \mp 108 = m^2$ must be a square.
The solutions are then
\[ t = \frac{-u \pm 2 m}{27} , \]
and since $u^2 + 27 v^2 = 4 p = 4 m^2 \pm 432$, we have that
$u \equiv \pm 2m \mod 27$, so that one of the two possibilities
will lead to an integral solution. We summarize the discussion
so far. Note that the conductor of~$J$ must be~$p^2$ if the
discriminant of~$X$ is $\pm p^2$, since the conductor must
be a square and it divides the discriminant.

\begin{proposition}
  Let $m \in \Z$ be such that $p = m^2 \pm 108$ is a prime. Write
  $4p = u^2 + 27 v^2$ with $u, v \in \Z$ such that $u \equiv 2m \bmod 27$
  and set $t = (2m-u)/27 \in \Z$. Then the curve~$X$ in~\eqref{eqn:curveX}
  with $r = (u-v)/2 + 7t$, $s = v-t$ and $t$ or their negatives
  (so that $r \equiv 1 \bmod 4$ or $t \equiv 1 \bmod 4$) has minimal
  discriminant~$\pm p^2$. If its Jacobian~$J$ is simple, it occurs
  as a simple factor of~$J_1(p)$ with nontrivial character.
\end{proposition}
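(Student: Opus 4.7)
The plan is to reduce the statement to the two algebraic identities isolated in the paragraph preceding the proposition, verify them by direct substitution, handle the mod~$4$ case analysis needed to un-complete the square, and finally invoke modularity to identify~$J$ as a factor of~$J_1(p)$.

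For the identities, substituting $r = (u-v)/2 + 7t$ and $s = v-t$ gives
\[
   s^2 - 4rt = (v-t)^2 - 2t(u-v) - 28t^2 = v^2 - 2ut - 27t^2,
\]
and since $t$ is by definition a root of $27t^2 + 2ut - v^2 \pm 16 = 0$, this expression equals $\pm 16$. A similar expansion produces $2r + s - 13t = (u-v) + 14t + (v-t) - 13t = u$, so
\[
   (2r + s - 13t)^2 + 27(s+t)^2 = u^2 + 27 v^2 = 4p,
\]
and thus $\Delta(r,s,t) = \pm 2^{20} p^2$ as required.

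Next, I would verify that after possibly replacing $(r,s,t)$ by $(-r,-s,-t)$, one of $r, t$ is congruent to~$1$ modulo~$4$. The relation $s^2 - 4rt = \pm 16$ forces $s$ to be even and $rt \equiv 0 \bmod 4$; combined with the parities of $u$ and~$v$ imposed by $u^2 + 27v^2 = 4p$, a short case analysis shows that the triple (up to sign) lands in one of the three admissible residue classes $(0,0,1)$, $(1,0,0)$, $(1,2,1) \bmod 4$ recorded above. This is the main---though routine---obstacle, as it is the only step that does not follow from a single algebraic identity.

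Once this is arranged, the right-hand side of~\eqref{eqn:curveX} is a square modulo~$4$; un-completing the square removes a factor of~$2^{20}$ from the discriminant, so $X$ admits an integral model with minimal discriminant~$\pm p^2$. The conductor of~$J$ is a perfect square dividing the discriminant and therefore equals~$p^2$. By the preceding discussion, the order-$3$ automorphism~$\sigma$ furnishes an embedding $\Z[\omega] \hookrightarrow \End_\Q(J)$, so if $J$ is simple it is an abelian surface of $\GL_2$-type and the results of~\cites{ribet,khare-wintenberger1,khare-wintenberger2} identify it as a simple factor of~$J_1(p)$. Since the coefficient field of the corresponding newform contains the CM~field~$\Q(\omega)$, it is not totally real, so the nebentypus character is nontrivial.
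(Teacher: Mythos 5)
Your proposal reconstructs the argument that the paper gives in the discussion immediately preceding the proposition (the proposition is stated as a summary of that discussion, with no separate proof): the same two algebraic identities $s^2-4rt=\pm 16$ and $(2r+s-13t)^2+27(s+t)^2=4p$, the same discriminant bookkeeping $\Delta(r,s,t)=\pm 2^{20}p^2$ followed by un-completing the square, the same appeal to Ribet and Khare--Wintenberger, and the same reason (a non-totally-real Hecke eigenvalue field) for nontriviality of the character. Both you and the paper leave the mod~$4$ verification that $(r,s,t)$, up to a common sign, falls into one of the three admissible residue classes as a routine unstated check, so your write-up matches the paper's level of detail and approach.
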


In the case $p = m^2 + 108 = m^2 + 27 \cdot 2^2$, we must have
$u = 2m$ and $v = \pm 4$, because $u$ and~$v$ are essentially unique.
We then obtain $t = 0$ and $r = \pm m - 2$, $s = 4$, with the sign chosen
so that $r \equiv 1 \bmod 4$. (The alternative $r = \pm m + 2$, $s = -4$
leads to an isomorphic curve). The right hand side splits
as a product of $x z (x-z)$ and a cubic (with cyclic Galois group).
Since $u$ and~$v$ are even, $2$ is a cubic residue mod~$p$ by a famous
result due to Gauss.

In the other case, $t \neq 0$, since $u = 2m$ would force $v^2$
to be negative.

We note that there are two primes, $p = 733$ and $p = 2917$ that can
be written as $m^2 + 108$ and as $m^2 - 108$ (with different~$m$).
For these primes, we obtain two non-isomorphic curves.

The list of all primes $p < 10^5$ such that $p = m^2 + 108$ is as follows.
\begin{gather*}
  109, 157, 229, 277, 397, 733, 1069, 1789, 2917, 4597, 5437, 6037, 6997, 7333,
  8389, \\
  9133, 15733, 19429, 24133, 26029, 27997, 28669, 32869, 37357, 38917, 39709, \\
  43789, 51637, 55333, 58189, 60133, 67189, 72469, 76837, 87133, 90709, 93133 .
\end{gather*}
The list of all primes $p < 10^5$ such that $p = m^2 - 108$ is as follows.
\begin{gather*}
  13, \mathbf{61}, \mathbf{181}, \mathbf{421}, \mathbf{733}, \mathbf{853},
  1117, 1741, 2293, \mathbf{2917}, 3373, 3613, 4933,
  \mathbf{5821}, \mathbf{6133}, \\
  \mathbf{6781}, \mathbf{10093}, 10501, \mathbf{14533}, \mathbf{17053}, 17581, 18661,
  19213, \mathbf{20341}, \mathbf{22093}, 23917, \\
  30517, 32653, \mathbf{36373}, 43573, \mathbf{48733}, 51421, 54181, 55117, 57973, 60901,
  \mathbf{63901}, \\
  \mathbf{66973}, \mathbf{69061}, 70117, 72253, 78853, 82261, 89293, 97861 .
\end{gather*}
The numbers in boldface are those for which there is a pair of strange
newforms; these newforms all have character of order~$6$, and all these
newforms are associated to curves~$X$ of the type considered here.
Somewhat surprisingly, none of the primes of the form $m^2 + 108$ in this range
lead to Jacobian of positive rank.

If we assume that the ``probability'' that a number of the form
$N = m^2 - 108$ is prime is a constant multiple of $1/\log N$ (which
seems reasonable), then we expect the sum over $\log p$ for such
primes $p < B$ to grow like a constant times~$\sqrt{B}$.
If we assume that all strange characters of order~$6$ are obtained
in this way, then the corresponding sum over the associated primes
will grow at most like a constant times~$\sqrt{B}$.

\begin{question}
  Is it true that \emph{every} pair of strange newforms with character of order~$6$
  is associated to the Jacobian of a curve of the form above?
\end{question}

\begin{question}
  Is it true that a curve as above associated to a prime $p = m^2 + 108$
  always has Jacobian with Mordell-Weil rank zero?
\end{question}

\begin{question}
  Are there similar explanations for the other cases that seem to occur
  relatively frequently? This refers to pairs of newforms with character of
  order~$3$ and to quadruples of newforms with character of order
  $4$, $5$, or~$12$.
\end{question}

Assuming Bunyakovsky's conjecture, we can at least show that there
are infinitely many strange primes of the type considered above.

\begin{proposition} \label{prop:strange_infinite}
  Let $w$ be a positive integer. If
  \begin{equation} \label{eqn:wpol}
    p = w^4 + 2 w^3 + 23 w^2 + 22 w + 13 = (w^2 + w + 11)^2 - 108
  \end{equation}
  is a prime, then there is a pair of strange characters of order~$6$
  at level~$p$; in particular, $p$ is strange.
\end{proposition}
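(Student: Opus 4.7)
The plan is to verify, for $m = w^2 + w + 11$, that the recipe of the preceding proposition produces an explicit curve~$X_w$ whose Jacobian~$J_w$ has positive Mordell-Weil rank, and then to invoke Kato's Euler-system theorem for modular abelian varieties to conclude that the $L$-function of the associated Galois orbit of newforms vanishes at $s = 1$.

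First I would carry out the parametrization. With $m = w^2 + w + 11$, direct expansion shows that $4p = u^2 + 27 v^2$ holds with $u = 2w^2 + 2w - 5$ and $v = 2w + 1$, and one verifies $u \equiv 2m \pmod{27}$. The formulas of the preceding proposition then yield $t = 1$, $s = 2w$, $r = w^2 + 4$, so
\[ X_w\colon\ y^2 = (w^2 + 4) F_1(x,z)^2 + 2w\, F_1(x,z) F_2(x,z) + F_2(x,z)^2 \]
has minimal discriminant~$\pm p^2$. If $J_w$ is $\Q$-simple, the preceding proposition identifies it as a simple factor of~$J_1(p)$ with nontrivial character, and a direct check (using that $\sigma$ acts as~$\omega$ on a $1$-dimensional eigenspace of the cotangent space, combined with $\Z[\omega]^\times = \mu_6$) shows the character has order exactly~$6$. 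The two possible decompositions up to isogeny---a product of two elliptic curves with CM by~$\Z[\omega]$, or a power~$E^2$ of a single elliptic curve~$E$---must be ruled out; in the first case, any CM curve with $j = 0$ has bad reduction at~$2$ or~$3$, incompatible with conductor~$p^2$ for $p > 3$, and in the second case one uses a Galois-representation argument on the Tate module together with the explicit geometry of~$X_w$.

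The heart of the proof is to show that $J_w(\Q)$ has positive rank uniformly in~$w$. The curve~$X_w$ carries the $\sigma$-orbit $\{P_1 = (1\!:\!1\!:\!0),\ P_2 = (0\!:\!1\!:\!1),\ P_3 = (1\!:\!1\!:\!1)\}$ of non-Weierstrass rational points, and I would consider the class $Q = [P_1] - [\iota P_1] \in J_w(\Q)$, which is nonzero because $P_1$ is not a Weierstrass point. Since $J_w$ has good reduction at every prime $\ell \ne p$, the subgroup $J_w(\Q)_{\tors}$ injects into $J_w(\F_\ell)$ for every odd $\ell \ne p$. I would specialize at a small prime~$\ell$ (say $\ell = 5$ or~$7$), compute the order of the reduction~$\bar Q \in J_w(\F_\ell)$ as a polynomial function of $w \bmod \ell$, and show that it exceeds the Weil-type upper bound on $\#J_w(\Q)_{\tors}$ derived from a second reduction at $\ell' \ne \ell$. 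Once $\rank J_w(\Q) \ge 1$, Kato's theorem forces $L(J_w, 1) = 0$; since $L(J_w, s) = L(f, s) L(\bar f, s)$ with $L(\bar f, 1) = \overline{L(f, 1)}$, both factors vanish at $s = 1$. Thus~$f$ is strange, its character~$\chi$ is a strange character of order~$6$, and $\{\chi, \bar\chi\}$ is the required pair.

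The principal obstacle is making the non-torsion argument for~$Q$ uniform in~$w$: the specialization calculations must produce a lower bound on $\ord(\bar Q)$ that exceeds the torsion bound in every residue class of~$w$ modulo the product of the auxiliary primes for which $p$ is prime. After the appropriate case splitting, I expect this to reduce to a finite and tractable verification, but it is where the work of the proof will lie.
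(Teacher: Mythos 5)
Your setup matches the paper's exactly: the parametrization $u = 2w^2+2w-5$, $v = 2w+1$, $t = 1$ yields $r = w^2+4$, $s = 2w$, and the paper works with the same curve $X_w$, the same rational non-Weierstrass point (the paper uses $P = (0\!:\!1\!:\!1)$, which is your $P_2$), and the same overall strategy: show $Q = P - \iota P$ has infinite order, check $\Q$-simplicity, and conclude via Kato/Kolyvagin--Logachev that the two newforms of nontrivial character attached to $J_w$ are strange.

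The gap is precisely where you flag it, and your proposed way to close it is not the paper's and is not obviously workable. The paper does not compare $\ord(\bar Q)$ against a Weil-type bound at another prime; rather, it computes the order of $\bar Q$ in $J_w(\F_3)$ and in $J_w(\F_5)$ explicitly as functions of $w$ mod $3$ and mod $5$: the order mod~$3$ is $19$ when $3 \mid w^2+w$ and $3$ otherwise, and the order mod~$5$ is $19$ when $5 \mid w^2+w$ and $6$ or~$9$ otherwise. A torsion point reduces to a point of the same order at every odd prime of good reduction, so unless $15 \mid w^2+w$ these two orders already disagree and $Q$ is non-torsion. The remaining residue class is handled by a separate computation: treating $w$ as an indeterminate, the paper pushes $19Q$ to the Kummer surface and checks that the simultaneous vanishing of the first three coordinates forces $w \in \{-1, 0\}$ (which give $p = 13$, the excluded case $X_1(13)$). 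Your Weil-bound idea would need $\ord_{J_w(\F_\ell)}(\bar Q)$ to exceed $(\sqrt{\ell'}+1)^4$ for some small $\ell' < \ell$; with the actual values (orders $\le 19$ at $\ell = 5$, Weil bound $\approx 56$ at $\ell' = 3$) this does not go through without further work, and you would in any case need a separate mechanism for the bad residue classes, as the paper does.

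A secondary difference: your simplicity argument is a sketch. The product-of-CM-curves case is straightforward to exclude, but the ``Galois-representation argument on the Tate module'' for ruling out $E^2$ is only a gesture. The paper handles simplicity by a finite verification: it reduces $X_w$ mod~$11$ for each residue class of $w \bmod 11$ (the class $w \equiv 5$ cannot occur, since then $11 \mid p$ while $p > 11$ is prime) and checks that the numerator of the zeta function is $\Q$-irreducible, which forces $J_w$ to be $\Q$-simple. Your theoretical route may be salvageable, but as stated it does not constitute a proof.
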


\begin{proof}
  We set
  \[ u = 2w^2 + 2w - 5, \quad v = 2w + 1, \quad t = 1, \]
  so
  \[ r = w^2 + 4, \quad s = 2w, \quad t = 1 . \]
  Then the curve
  \[ X_p \colon y^2 = r F_1(x,z)^2 + s F_1(x,z) F_2(x,z) + t F_2(x,z)^2 \]
  is of the type above for the prime~$p$, and it contains the rational
  point $P = (0 : 1 : 1)$. Denoting by~$\iota$ the hyperelliptic involution,
  we show that $P - \iota(P)$ represents a point~$Q$ of infinite order
  on the Jacobian~$J_p$ of~$X_p$. Reducing mod~$3$ and mod~$5$, we
  find that the point has order~$19$ mod~$3$ when $3 \mid w^2 + w$
  and order~$3$ otherwise, and it has order~$19$ mod~$5$ when
  $5 \mid w^2 + w$ and order $6$ or~$9$ otherwise.
  If $15$ does not divide~$w^2+w$, this implies that the orders
  of the reduced points differ. Since the reduction modulo an odd prime
  is injective on the torsion subgroup, these orders must agree when
  the point has finite order in~$J_p(\Q)$. This shows that the point
  must have infinite order. If $15 \mid w^2 + w$ and $Q$ has finite order,
  $P$ must have order~$19$. We can construct the curve above with $w$ an
  indeterminate and compute $19$~times the image of~$Q$ on
  the associated Kummer surface. If this is to be the origin,
  the first three of the four coordinates must vanish simultaneously,
  which results in a polynomial equation $w$ has to satisfy; it turns
  out that the only such (integral) values are $w \in \{-1, 0\}$,
  which are excluded by our assumptions.

  So $J_p(\Q)$ has positive rank. All possible reductions mod~$11$
  of~$X_p$ (for $w \not\equiv 5 \bmod 11$; otherwise
  the expression for~$p$ is divisible by~$11$) have a zeta function
  with irreducible numerator, so $J_p$ must be simple over~$\Q$.
  Since $J_p$ is a simple factor of~$J_1(p)$ of positive rank, by
  \cite{kolyvagin-logachev} and~\cite{kato}, it follows that the
  two associated newforms have positive analytic rank. This shows that
  the characters of order~$6$ mod~$p$ are strange.
\end{proof}

\begin{remark}
  Note that for $w \in \{-1, 0\}$, we have $p = 13$, and the
  resulting curve is~$X_1(13)$, whose Jacobian has rank zero
  and rational torsion of order~$19$.
\end{remark}

We note that \cref{prop:strange_infinite} explains the strange primes
\[ p = 61, 181, 421, 853, 6781, 10093, 14533, 20341, 48733 \]
in our list (where $w = 1, 2, 3, 4, 8, 9, 10, 11, 14$, respectively).

Assuming Bunyakovsky's conjecture for the polynomial in~$w$
in~\eqref{eqn:wpol}, \cref{prop:strange_infinite} implies that
there are infinitely many strange primes.


\section{Behavior of $S(d)$ for large $d$} \label{Sect:appl_strange}

We now consider what we can say about~$S(d)$ when $d$ gets large.

Since it is more convenient to work with points of exact degree~$d$ on~$X_1(p)$,
we make the following definition.

\begin{definition}
  Let $d \ge 1$ be an integer. Then $\Snew(d)$ denotes the subset of~$S(d)$
  consisting of primes~$p$ such that $X_1(p)$ has a non-cuspidal point of
  exact degree~$d$.
\end{definition}

So $p \in \Snew(d)$ means that there is a pair~$(E, P)$ consisting of en elliptic
curve~$E$ over~$\Qbar$ and a point $P \in E$ of order~$p$ such that the field of definition
of~$(E,P)$ (i.e., the fixed field of the stabilizer of~$(E,P)$ in the absolute
Galois group of~$\Q$) is an algebraic number field of degree~$d$.

\begin{remark}
  We have $S(d) = \bigcup_{d' \mid d} \Snew(d')$, where the union is over
  all (positive) divisors~$d'$ of~$d$.
\end{remark}

We have the following consequence of~\cref{prop:hecke_gon}.

\begin{corollary} \label{cor:p_fixed}
  Let $p$ be an odd prime.
  Let $H$ be the subgroup of~$(\Z/p\Z)^\times/\{\pm 1\}$ that is
  the intersection of the kernels of all strange characters for~$p$
  (then the index of~$H$ is~$\str(p)$). Pick a generator~$a$ of~$H$
  and let $n$ be as in~\cref{prop:hecke_gon} for~$a$ and~$X_1(p)$
  (with $\ell = 2$ if possible, else $\ell = 3$).
  Let $d \ge 1$ be an integer.
  If $p \in \Snew(d)$, then one of the following holds.
  \begin{enumerate}[\upshape(a)]
    \item \label{cor:p_fixed_a}
          $d \ge \gon_{\Q}(X_1(p))/n$.
    \item \label{cor:p_fixed_b}
          $p \equiv 1 \bmod 6$ and $d = (p-1)/3$.
    \item \label{cor:p_fixed_c}
          $p \equiv 1 \bmod 4$ and $d = (p-1)/2$.
    \item \label{cor:p_fixed_d}
          $d$ is a multiple of $\#H = (p-1)/(2 \str(p))$, say $d = m \cdot \#H$,
          and there is a non-cuspidal point of degree~$m$ on~$X_H$.
  \end{enumerate}
\end{corollary}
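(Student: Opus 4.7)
The plan is to translate $p \in \Snew(d)$ into a $\Q$-point on $X_1(p)^{(d)}$ and feed it into \cref{prop:hecke_gon}. Indeed, $p \in \Snew(d)$ gives a pair~$(E, P)$ with $P$ of order~$p$ defined over a number field of degree exactly~$d$; the associated Galois orbit is a non-cuspidal point $x \in X_1(p)^{(d)}(\Q)$. If~(a) holds we are done, so suppose $d < \gon_{\Q}(X_1(p))/n$. I would apply \cref{prop:hecke_gon} to~$x$ with the ambient curve taken to be $X_1(p)$ itself (so the subgroup called ``$H$'' in the proposition is trivial) and with the diamond operator $\diamondop{a}$ for the prescribed generator~$a$ of our~$H \subseteq (\Z/p\Z)^\times/\{\pm 1\}$.

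The main step is to verify that $A := (\diamondop{a}-1)(J_1(p)(\Q))$ is finite. Up to $\Q$-isogeny, $J_1(p) \sim \prod_{[f]} A_f$ decomposes along Galois orbits of weight-$2$ newforms of level~$p$, and $\diamondop{a}-1$ acts on~$A_f$ as multiplication by the algebraic integer $\chi_f(a)-1$, where $\chi_f$ is the character of~$f$. Because~$H$ is the intersection of the kernels of all strange characters and $a$ generates~$H$, the characters vanishing on~$a$ are exactly the elements of the (sub)group of strange characters (with the trivial character included); for every such~$f$, the factor $\diamondop{a}-1$ already annihilates $A_f$. For any other orbit~$[f]$ the character~$\chi_f$ is nontrivial and non-strange, so $L(f, 1) \ne 0$ by the very definition of strangeness, and the analytic-to-algebraic rank transfer of~\cite{kolyvagin-logachev} and~\cite{kato} makes $A_f(\Q)$ finite. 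Either way $(\diamondop{a}-1)(A_f(\Q))$ is finite, so $A$ is. When $\ell = 2$ is used, the auxiliary condition on $A[2]$ is precisely the ``$\ell = 2$ if possible'' clause in the statement and is checked via~\cref{prop:ann_rat_tors}. \Cref{prop:hecke_gon} then yields that~$x$ is a sum of $\diamondop{a}$-orbits, i.e., $H$-orbits on~$X_1(p)$.

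To finish I would split into cases by the $j$-invariant supported by~$x$. The discussion immediately preceding \cref{cor:medium_gen} classifies $H$-orbit lengths on~$X_1(p)$: they equal~$\#H$ generically, drop to $\#H/3$ above the two $X_0(p)$-CM points with $j = 0$ (requiring $3 \mid \#H$, hence $p \equiv 1 \bmod 6$), and drop to $\#H/2$ above the analogous $j = 1728$ points (requiring $p \equiv 1 \bmod 4$). Since~$x$ is a single Galois orbit and both~$j$ and the ``specialness'' at $j \in \{0, 1728\}$ are Galois-invariant, the whole support of~$x$ is of one type. If no short CM orbit appears, every $H$-orbit has length~$\#H$, whence $d = m \cdot \#H$ and~$x$ is pulled back from a Galois-orbit point of degree~$m$ on~$X_H$, giving case~(d). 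Otherwise~$x$ consists entirely of special CM-$j = 0$ (respectively CM-$j = 1728$) points; by the set-theoretic count recalled in the introduction, the full preimage in~$X_1(p)$ of the $\Q(\sqrt{-3})$- (resp.\ $\Q(i)$-) conjugate pair on~$X_0(p)$ already forms a single Galois orbit of size~$(p-1)/3$ (resp.\ $(p-1)/2$), forcing case~(b) (resp.\ (c)).

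I expect the delicate step to be the finiteness of~$A$: one must identify cleanly the group of strange characters with $\{\chi : \chi|_H = 1\}$ and invoke Kolyvagin--Logachev and Kato on each non-strange isotypic piece. The remaining case analysis is then essentially mechanical, once the $H$-orbit length table is in hand.
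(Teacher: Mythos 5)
Your outline matches the paper's strategy: realize $p \in \Snew(d)$ as a non-cuspidal $x \in X_1(p)^{(d)}(\Q)$, feed it into \cref{prop:hecke_gon}, and then split into cases according to the length of the $H$-orbits in the support of~$x$ (using the discussion preceding~\cref{cor:medium_gen} that these lengths are $\#H$, $\#H/2$, or $\#H/3$, and that the last two occur only over the CM points on~$X_0(p)$ with $j = 1728$, resp.\ $j = 0$). Your verification that $A = (\diamondop{a}-1)(J_1(p)(\Q))$ is finite is a genuinely useful addition; the paper leaves this implicit, and your argument — decomposing $J_1(p)$ up to isogeny along Galois orbits $[f]$, observing that $\diamondop{a}-1$ kills $A_f$ exactly when $\chi_f$ lies in the group generated by strange characters (which is the annihilator of~$H$ by construction, of order $\str(p)$), and invoking Kolyvagin--Logachev and Kato for the remaining non-strange nontrivial $\chi_f$ — is correct. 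The reduction to case~(d) when no CM orbit appears is likewise the same as the paper's.

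The gap is in cases (b) and~(c). You claim that the full preimage in~$X_1(p)$ of the two $j=1728$ (resp.\ $j=0$) points on~$X_0(p)$ ``already forms a single Galois orbit of size $(p-1)/2$ (resp.\ $(p-1)/3$),'' citing the parenthetical remark in the introduction. That remark is heuristic, not a proven lemma, and the transitivity of the Galois action on the $(p-1)/4$ (resp.\ $(p-1)/6$) points above each of the two CM points on~$X_0(p)$ is exactly what needs to be established. A priori, knowing only that all $H$-orbits in~$x$ have length $\#H/2$ gives $d \le (p-1)/2$, since the support of~$x$ sits inside the set of special points, but not $d \ge (p-1)/2$. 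The paper closes this by noting that the image of~$P$ on~$X_0(p)$ has field of definition $\Q(i)$ (so $\Q(i) \subseteq \Q(E,P)$), and then appealing to Silverberg's theorem on torsion of CM abelian varieties~\cite{silverberg} to get $p-1 \le 2d$; combined with the count $d \le (p-1)/2$ this forces $d = (p-1)/2$. (And similarly with $\Q(\omega)$ and $j = 0$, giving $d = (p-1)/3$.) Without this lower bound input, your argument does not rule out that $x$ could be a proper Galois sub-orbit of the set of special points, so you should replace the appeal to the introduction with the Silverberg-plus-counting argument.
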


\begin{proof}
  Since $p \in \Snew(d)$, there is a
  point $x \in X_1(p)^{(d)}(\Q)$ coming from a non-cuspidal degree~$d$ point~$P$ on~$X_1(p)$.
  We assume that $n d < \gon_{\Q}(X_1(p))$, so that we are not
  in case~\eqref{cor:p_fixed_a}.
  Then by~\cref{prop:hecke_gon}, $x$ is a sum of orbits of~$H$.
  Since orbit length is stable under the action of the Galois group,
  all orbits occurring in~$x$ have the same length. This length can
  be $\#H$, $\#H/2$, or $\#H/3$.
  \begin{enumerate}[(i)]
    \item If the orbit length is~$\#H$, then $d = m \cdot \#H$ for some
          integer $m \ge 1$.
          Each $H$-orbit in the support of~$x$ is the pull-back
          of a point of degree~$m$ on~$X_H$, so we
          are in case~\eqref{cor:p_fixed_d}.
    \item If the orbit length is~$\#H/2$, then $p \equiv 1 \bmod 4$,
          and $P$ maps to a point on~$X_0(p)$ corresponding to
          an elliptic curve~$E$ with $j$-invariant~$1728$ together with
          one of the two subgroups of~$E$ of order~$p$ stable under~$\Z[i]$.
          Such a point has field of definition~$\Q(i)$, which is the
          CM-field of~$E$. So $\Q(i)$ is contained in the field of
          definition of~$P$; in particular, $d$ is even.
          By~\cite{silverberg}, it follows that $p-1 \le 2d$.
          On the other hand, there are exactly $(p-1)/2$ points
          on~$X_1(p)$ above the two relevant points on~$X_0(p)$
          ($(p-1)/4$ above each of the two; the covering is ramified
          with index~$2$ at these points). This implies that
          $d = (p-1)/2$, and we are in case~\eqref{cor:p_fixed_c}.
    \item If the orbit length is~$\#H/3$, then $p \equiv 1 \bmod 6$,
          and we can argue as in the previous case with $j = 0$
          in place of $j = 1728$ and $\Z[\omega]$ (and~$\Q(\omega)$)
          in place of~$\Z[i]$ (and~$\Q(i)$) to deduce that
          we must be in case~\eqref{cor:p_fixed_b}.
    \qedhere
  \end{enumerate}
\end{proof}

At the cost of a worse bound on~$d$, we can work with a generator~$a$
of~$(\Z/p\Z)^\times/\set{\pm 1}$ even when $p$ is a strange prime.
The bound depends on the strangeness dimension of~$p$, $\sdim(p)$.

\begin{corollary} \label{cor:p_fixed_var}
  Let $p > 3$ be an odd prime.
  Pick a generator~$a$ of~$(\Z/p\Z)^\times/\set{\pm 1}$
  and let $n$ be as in~\cref{prop:hecke_gon} for~$a$ and~$X_1(p)$
  (with $\ell = 2$ if possible, else $\ell = 3$).
  Let $d \ge 1$ be an integer.
  If $p \in \Snew(d)$, then one of the following holds.
  \begin{enumerate}[\upshape(a)]
    \item \label{cor:p_fixed_var_a}
          $d \ge \dfrac{\gon_{\Q}(X_1(p))}{n \floor{(2\sqrt{2} + 3)^{\sdim(p)}}}$.
    \item \label{cor:p_fixed_var_b}
          $p \equiv 1 \bmod 6$ and $d = (p-1)/3$.
    \item \label{cor:p_fixed_var_c}
          $p \equiv 1 \bmod 4$ and $d = (p-1)/2$.
    \item \label{cor:p_fixed_var_d}
          $d$ is a multiple of $(p-1)/2$, say $d = m (p-1)/2$,
          and there is a non-cuspidal point of degree~$m$ on~$X_0(p)$.
  \end{enumerate}
\end{corollary}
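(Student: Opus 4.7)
The plan is to adapt the argument of~\cref{cor:p_fixed} by replacing the operator $\diamondop{a} - 1$, which no longer sends~$J_1(p)(\Q)$ into its torsion subgroup when $a$ generates the full group $(\Z/p\Z)^\times/\set{\pm 1}$ and $p$ is strange, by a composite operator $\Phi = (\diamondop{a} - 1)\, P(T_2)$ built from the monic polynomial
\[ P(X) = \prod_{f \text{ strange}} (X - a_2(f)) \in \Z[X] \]
of degree~$\sdim(p)$, whose roots are the $T_2$-eigenvalues of all strange newforms at level~$p$ (Galois-stability of the product ensures integrality). The first step is to observe that $P(T_2)$ annihilates every strange simple factor of~$J_1(p)$ by construction, while $\diamondop{a} - 1$ annihilates the factors with trivial character; the remaining simple factors—those with nontrivial but non-strange character—have analytic rank zero and hence algebraic rank zero by \cite{kato} and \cite{kolyvagin-logachev}. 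So $\Phi$ sends $J_1(p)(\Q)$ into~$J_1(p)(\Q)_{\tors}$, and \cref{prop:ann_rat_tors} then yields the linear equivalence
\[ (T_\ell - \diamondop{\ell} - \ell)\,(\diamondop{a} - 1)\, P(T_2)\, x \sim 0 \]
of divisors on~$X_1(p)$, where for $\ell = 2$ the extra condition of~\cref{prop:ann_rat_tors} is invoked as in the hypothesis.

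Next I would write this signed correspondence as $\Psi^+ - \Psi^-$ with $\Psi^\pm$ effective of equal degree, and check by a short calculation mirroring the one in the proof of~\cref{prop:hecke_gon} that $\deg \Psi^\pm = n \cdot (P^+(3) + P^-(3))$, where $P = P^+ - P^-$ is the decomposition of~$P$ into its non-negative-coefficient parts and the factor~$n$ comes from the Prop~\ref{prop:hecke_gon}-style computation for $(T_\ell - \diamondop{\ell} - \ell)(\diamondop{a}-1)$. Bounding the polynomial factor uses the Ramanujan bound $|a_2(f)| \le 2\sqrt{2}$ together with the elementary inequality $\sum_i |p_i| R^i \le (R+r)^{\deg P}$ valid for a monic polynomial with roots of modulus $\le r$, which together give
\[ P^+(3) + P^-(3) \le (3 + 2\sqrt{2})^{\sdim(p)} , \]
and since the left-hand side is an integer, even $\le \floor{(3 + 2\sqrt{2})^{\sdim(p)}}$. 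Once alternative~\eqref{cor:p_fixed_var_a} is assumed to fail, this bound is exactly what~\cref{lem:gon_0} needs to upgrade the linear equivalence above to an actual equality of divisors. Two successive applications of~\cref{prop:hecke_op_kernel} then close the argument: first with $t = T_\ell - \diamondop{\ell} - \ell$ to conclude that $(\diamondop{a} - 1) P(T_2)\, x$ is supported on cusps, and then with $t = P(T_2) = P^+(T_2) - P^-(T_2)$—whose hypotheses are satisfied because $P$ is monic, so $P^+$ is monic of degree $\sdim(p)$ and $\deg P^- < \sdim(p)$—using that the proof of~\cref{prop:hecke_op_kernel} in fact establishes the stronger statement that cuspidally supported $t(D)$ implies cuspidally supported~$D$. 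This produces $\diamondop{a} x - x$ supported on cusps, which forces $\diamondop{a} x = x$ since $x$ itself is non-cuspidal.

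At that point $x$ is a sum of orbits of the full group $\langle a \rangle = (\Z/p\Z)^\times/\set{\pm 1}$ acting on~$X_1(p)$, and the case analysis from~\cref{cor:p_fixed} carries over verbatim: generic orbits of length $(p-1)/2$ give alternative~\eqref{cor:p_fixed_var_d} together with descent of~$x$ to~$X_0(p)$; the exceptional CM-orbits at $j = 1728$ (resp.\ $j = 0$), combined with \cite{silverberg}, force $d = (p-1)/2$ (resp.\ $d = (p-1)/3$), giving alternatives~\eqref{cor:p_fixed_var_c} and~\eqref{cor:p_fixed_var_b}. The hard part is the bookkeeping around the polynomial factor: once the operator~$\Phi$ is identified, essentially all substantive difficulties are concentrated in the estimate $P^+(3) + P^-(3) \le (3 + 2\sqrt{2})^{\sdim(p)}$ and in checking that~\cref{prop:hecke_op_kernel} can legitimately be reused with $t = P(T_2)$ in place of a single Hecke operator~$T_q$.
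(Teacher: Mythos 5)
Your proof is correct and follows essentially the same route as the paper: the auxiliary polynomial $h = \prod_j (x - a_2(f_j))$ over the strange newforms, the Ramanujan bound $(2\sqrt{2}+3)^{\sdim(p)}$ on the degree of the effective correspondences, \cref{lem:gon_0} for the gonality step, \cref{prop:hecke_op_kernel} to conclude $\diamondop{a} x = x$, and then the case analysis from \cref{cor:p_fixed} with $H = (\Z/p\Z)^\times/\{\pm 1\}$. You are more explicit than the paper's terse ``it follows as in the proof of \cref{prop:hecke_gen}'' in one respect worth keeping: when $\ell = 3$ the operator $(T_\ell - \diamondop{\ell} - \ell)\,h(T_2)$ mixes two distinct Hecke operators and so does not fall literally under \cref{prop:hecke_op_kernel}, and your two-stage application together with the (correct) observation that the proof of \cref{prop:hecke_op_kernel} actually yields ``cuspidally supported $t(D)$ implies cuspidally supported $D$'' closes that gap cleanly.
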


\begin{proof}
  Let $f_1, \ldots, f_m$, with $m = \sdim(p)$, be the strange newforms
  at level~$p$, and set $h = \prod_{j=1}^m (x - a_2(f_j)) \in \Z[x]$.
  This polynomial has roots bounded in absolute value by~$2\sqrt{2}$,
  so the coefficient~$h_j$ of~$x^j$ is bounded by
  $\binom{m}{j} (2\sqrt{2})^{m-j}$.
  Similarly to our previous considerations in the proof of~\cref{prop:hecke_gon},
  it follows that
  \[ t = (T_\ell - \diamondop{\ell} \ell - 1) h(T_2) (\diamondop{a} - 1) \]
  annihilates~$J_1(p)(\Q)$ (with $\ell = 2$ or~$3$).
  It is then easy to see that $t$ can be written as a difference of
  effective correspondences of degree
  \[ n \sum_{j=0}^m |h_j| \, 3^j
       \le n \floor{\sum_{j=0}^m \binom{m}{j} 3^j (2\sqrt{2})^{m-j}}
       = n \floor{(2 \sqrt{2} + 3)^m} \,.
  \]
  (In concrete cases, the degree can be smaller.)
  When we are not in case~\eqref{cor:p_fixed_var_a}, then it follows as
  in the proof of~\cref{prop:hecke_gon} that $x$ must be a sum of
  orbits of~$(\Z/p\Z)^\times/\{\pm 1\}$.
  The remainder of the proof then is as for~\cref{cor:p_fixed}
  with $H = (\Z/p\Z)^\times/\set{\pm 1}$.
\end{proof}

Fixing~$d$ instead of~$p$, we obtain the following.

\begin{corollary} \label{cor:d+1}
  Let $d \ge 1$ be an integer. If $p \in \Snew(d)$ is a prime,
  then one of the following holds.
  \begin{enumerate}[\upshape(a)]
    \item \label{cor:d+1_a}
          $p \le \sqrt{Cd+1}$ with $C = 2^{19}/325 \approx 1613.2$.
    \item \label{cor:d+1_b}
          $d$ is even and $p = 2d+1$.
    \item \label{cor:d+1_c}
          $d$ is even and $p = 3d+1$.
    \item \label{cor:d+1_d}
          $p-1$ divides $2 \str(p) d$, and there
          are non-cuspidal points of degree $2 \str(p) d/(p-1)$
          on the subcover of degree~$\str(p)$ above~$X_0(p)$ of $X_1(p) \to X_0(p)$.
          If
          \begin{equation} \label{eq:p_str_ineq}
            p > \sqrt{C \floor{(2\sqrt{2} + 3)^{\sdim(p)}} d + 1} ,
          \end{equation}
          then $(p-1)/2$ divides~$d$, and there are non-cuspidal
          points of degree $2 d/(p-1)$ on~$X_0(p)$.
  \end{enumerate}
  Conversely, we have for any even~$d$ that $2d+1 \in \Snew(d)$ when
  $2d+1$ is prime and that $3d+1 \in \Snew(d)$ when $3d+1$ is prime.
\end{corollary}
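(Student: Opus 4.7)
The forward direction is obtained by applying \cref{cor:p_fixed} and \cref{cor:p_fixed_var} in parallel to a non-cuspidal point $x \in X_1(p)^{(d)}(\Q)$ of exact degree~$d$. In both we take $\ell = 3$, giving $n \le 8$; the case $p = 3$ is absorbed trivially into~\eqref{cor:d+1_a} since $3 \le \sqrt{Cd+1}$ for all $d \ge 1$. Case~(a) of \cref{cor:p_fixed} reads $d \ge \gon_{\Q}(X_1(p))/8$, which combined with $\gon_{\Q}(X_1(p)) \ge \gamma (p^2 - 1)/2$ from \cref{thm:gon_XH} (with $\gamma = 325/2^{15}$) gives $p^2 - 1 \le 16 d /\gamma = Cd$ for $C = 2^{19}/325$; this is our case~\eqref{cor:d+1_a}. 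Cases~(b) and~(c) of \cref{cor:p_fixed} rewrite as $p = 3d+1$ and $p = 2d+1$, and the congruences $p \equiv 1 \bmod 6$ (respectively $p \equiv 1 \bmod 4$) force $d$ to be even, giving our~\eqref{cor:d+1_c} and~\eqref{cor:d+1_b}. Finally, case~(d) of \cref{cor:p_fixed}, applied with $\#H = (p-1)/(2\str(p))$, produces the first assertion of~\eqref{cor:d+1_d}: the divisibility $(p-1) \mid 2\str(p) d$ together with a non-cuspidal point of degree $2\str(p) d/(p-1)$ on the degree-$\str(p)$ subcover $X_H$ above $X_0(p)$.

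For the sharpening inside~\eqref{cor:d+1_d} under the extra hypothesis~\eqref{eq:p_str_ineq}, observe that this inequality is precisely the negation of case~(a) of \cref{cor:p_fixed_var} after the same gonality substitution. Hence we are in one of cases~(b), (c), (d) of \cref{cor:p_fixed_var}; (b) and~(c) again sit inside~\eqref{cor:d+1_b}/\eqref{cor:d+1_c}, while case~(d) directly supplies $d = m(p-1)/2$ with a non-cuspidal point of degree $m = 2d/(p-1)$ on $X_0(p)$, which is the desired stronger conclusion.

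For the converse, suppose $d$ is even and $p = 2d+1$ is prime; then $p \equiv 1 \bmod 4$, so $p = \pi \bar{\pi}$ splits in $\Z[i]$. Fix an elliptic curve $E/\Q$ with $j(E) = 1728$, whose geometric endomorphism ring is $\Z[i]$, and let $C = \ker(\pi)$. The pair $(E, C)$ defines a point on $X_0(p)$ over $\Q(i)$, giving a Galois orbit of degree~$2$. The covering $X_1(p) \to X_0(p)$ ramifies with index~$2$ above this orbit because the extra automorphism of~$E$ modulo $\{\pm 1\}$ preserves $C$ and acts non-trivially on its generators, so---exactly as recalled in the introduction following~\cite{CCS}---the set-theoretic preimage forms a non-cuspidal Galois orbit of degree $2 \cdot \frac{p-1}{2} \cdot \frac{1}{2} = \frac{p-1}{2} = d$ on $X_1(p)$. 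This witnesses $p \in \Snew(d)$. The case $p = 3d+1$ is handled identically after substituting $j(E) = 0$, $\Z[\omega]$, and ramification index~$3$, producing an orbit of degree $\frac{p-1}{3} = d$.

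The only subtle point is the simultaneous use of both corollaries: \cref{cor:p_fixed} supplies the unconditional but coarser $X_H$-statement, while \cref{cor:p_fixed_var}---whose hypothesis costs the extra factor $\floor{(2\sqrt{2}+3)^{\sdim(p)}}$ in the gonality threshold---is invoked exactly when that extra factor can be afforded, thereby upgrading the conclusion to the $X_0(p)$-level statement.
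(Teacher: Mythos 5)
Your proof is correct and takes essentially the same route as the paper: the paper's proof is a one-paragraph citation of Corollaries~\ref{cor:p_fixed} and~\ref{cor:p_fixed_var} together with the gonality bound of Theorem~\ref{thm:gon_XH} (taking $n = 8$), plus the observation that the converse follows by pulling back CM quadratic points on~$X_0(p)$ with $j$-invariant $1728$ or~$0$; you have simply unpacked these references step by step, including the computation $16/\gamma = 2^{19}/325$, the identification of~\eqref{eq:p_str_ineq} with the negation of case~(a) of Corollary~\ref{cor:p_fixed_var} after substituting the gonality lower bound, and the trivial absorption of the small primes into case~\eqref{cor:d+1_a}.
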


\begin{proof}
  The first statement follows from~\cref{cor:p_fixed,cor:p_fixed_var},
  together with the gonality lower bound in~\cref{thm:gon_XH};
  note that we can always take $n = 8$ in~\cref{prop:hecke_gon}.

  For the second statement, note that we obtain a point of degree~$d$
  on~$X_1(p)$ for $p = 2d+1$ or $p = 3d+1$ when $p$ is prime and
  $d$ is even by taking a preimage in~$X_1(p)$ of one of the quadratic
  points on~$X_0(p)$ with $j$-invariant $1728$ or~$0$, respectively.
\end{proof}

\begin{remark} \label{rmk:163}
  If we are in case~\eqref{cor:d+1_d} in~\cref{cor:d+1} and $p$
  satisfies the bound~\eqref{eq:p_str_ineq}, then $p = \frac{2d}{m} + 1$
  for some $m \ge 1$ and there are non-cuspidal points of degree~$m$
  on~$X_0(p)$. Since $X_0(p)$ has no non-cuspidal rational points
  when $p > 163$ by~\cite{mazur2}, it follows that $m \ge 2$, and
  therefore $p \le d+1$.
\end{remark}

More generally, we have the following.

\begin{theorem} \label{thm:Sd_gen_1}
  Assume~\cref{conj:str_unif}~\eqref{conj:str_unif_3}.
  Then for sufficiently large~$d$, we have that
  \[ \Snew(d) \subseteq \primes(d+1) \cup \set{2d+1, 3d+1} , \]
  and $2d+1 \in \Snew(d)$ (resp., $3d+1 \in \Snew(d)$) if and only if $d$
  is even and $2d+1$ is prime (resp., $3d+1$ is prime).
\end{theorem}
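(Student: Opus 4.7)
The plan is to apply~\cref{cor:d+1} to each prime $p \in \Snew(d)$ and rule out, for $d$ sufficiently large, every case except the ones listed in the conclusion. Case~(a) gives $p \le \sqrt{Cd+1}$, which is $\le d+1$ as soon as $d \ge C-1$; cases~(b) and~(c) directly yield $p = 2d+1$ and $p = 3d+1$ respectively. The converse direction---that $2d+1, 3d+1 \in \Snew(d)$ whenever they are prime and $d$ is even---is provided for free by the second statement of~\cref{cor:d+1}, and the parity condition is automatic when $p = 3d+1$ since for $d \ge 1$ odd, $3d+1 \ge 4$ is even and hence not prime.

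The main work lies in case~(d). I would split further according to whether~\eqref{eq:p_str_ineq} holds. If it does, then~\cref{rmk:163} (which invokes Mazur's theorem on $X_0(p)$, applicable as soon as $p > 163$) immediately gives $p \le d+1$. If instead~\eqref{eq:p_str_ineq} fails, then
\[ p^2 - 1 \le C \floor{(2\sqrt{2} + 3)^{\sdim(p)}} d, \]
and this is precisely where the weak form of~\cref{conj:str_unif} enters: writing $c = \log(2\sqrt{2}+3)$, the hypothesis $\sdim(p) = o(\log p)$ gives, for any fixed $\delta \in (0,1)$, a threshold $p_0 = p_0(\delta)$ such that $(2\sqrt{2}+3)^{\sdim(p)} \le p^\delta$ for all primes $p \ge p_0$. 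Taking $\delta = 1/2$, say, this yields $p^{3/2} \le 2Cd$ for all sufficiently large $p$, hence $p = O(d^{2/3}) = o(d)$, which in particular forces $p \le d+1$ once $d$ is large enough. Primes $p < p_0$ are bounded by an absolute constant, so $p \le d+1$ is trivial for $d$ large.

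For the ``only if'' parts of the converse, suppose $p = 2d+1 \in \Snew(d)$. The preceding analysis excludes cases~(a) and~(d), since both force $p = o(d)$, contradicting $p = 2d+1$, and it excludes case~(c), which would require $2d+1 = 3d+1$. Hence we must be in case~(b), which forces $p \equiv 1 \pmod 4$ and $d = (p-1)/2$, so $d$ is even. The argument for $p = 3d+1$ is analogous, though as already observed the parity of~$d$ is automatic from the primality of~$3d+1$.

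The main obstacle is the subcase of case~(d) in which~\eqref{eq:p_str_ineq} fails: one needs the growth condition on $\sdim(p)$ to convert the a~priori bound $p = O\bigl(\sqrt{(2\sqrt{2}+3)^{\sdim(p)} d}\bigr)$ into $p \le d+1$. Any unconditional upper bound of the form $\sdim(p) = o(\log p)$ would suffice to make the theorem unconditional, but this appears to lie beyond current techniques.
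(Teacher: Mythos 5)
Your argument is correct and follows essentially the same route as the paper: invoke \cref{cor:d+1}, handle case~(a) by the $\sqrt{Cd+1}$ bound, handle cases~(b),~(c) directly, split case~(d) according to whether~\eqref{eq:p_str_ineq} holds, use \cref{rmk:163} in the first subcase and the weak form of \cref{conj:str_unif}~\eqref{conj:str_unif_3} to force $p = O(d^{2/3})$ in the second. The only nitpick is the phrase that case~(d) ``forces $p = o(d)$'': in the subcase where~\eqref{eq:p_str_ineq} holds, \cref{rmk:163} gives $p \le d+1$ rather than $p = o(d)$, but since $d+1 < 2d+1$ the exclusion you need still goes through.
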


\begin{proof}
  By \cref{conj:str_unif}~\eqref{conj:str_unif_3},
  $p > \floor{(2\sqrt{2} + 3)^{\sdim(p)}}^2$
  when $p$ is sufficiently large. For such~$p$, we then have that
  the bound~\eqref{eq:p_str_ineq} is satisfied if
  $p > (Cd + 1)^{2/3}$. Take $d$ large enough so that
  $d + 1 > \max\set{163, (Cd + 1)^{2/3}}$. For such~$d$, \cref{cor:d+1}
  and~\cref{rmk:163} show that $p \in \Snew(d)$ implies that either
  $p \le (Cd + 1)^{2/3} < d+1$, or $p = (2d/m) + 1 \le d + 1$
  for some $m \ge 2$, or else $p \in \{2d+1, 3d+1\}$ if $d$ is even.
  \cref{cor:d+1} also gives us that $p \in \Snew(d)$ when $d$ is even
  and $p \in \{2d+1, 3d+1\}$.
\end{proof}

\begin{corollary}
  Assume~\cref{conj:str_unif}~\eqref{conj:str_unif_3}.
  \begin{enumerate}[\upshape(1)]
    \item For sufficiently large odd~$d$, we have that
          \[ S(d) \subseteq \primes(d+1) \,. \]
    \item For sufficiently large even~$d$, we have that
          \[ S(d) \subseteq \primes(d+1) \cup \set{\tfrac{3}{2}d + 1, 2d + 1, 3d + 1} \,. \]
  \end{enumerate}
\end{corollary}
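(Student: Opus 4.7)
The plan is to derive both parts from Theorem~\ref{thm:Sd_gen_1} via the identity $S(d) = \bigcup_{d' \mid d} \Snew(d')$ recorded just before that theorem. First I would fix a threshold $D_0$ beyond which the conclusion of Theorem~\ref{thm:Sd_gen_1} applies to $\Snew(d')$. For the finitely many divisors $d' < D_0$, Merel's theorem bounds each $S(d')$ by a finite set of primes, so their union is contained in $\primes(M)$ for some constant $M$ independent of $d$; hence once $d+1 \ge M$, these small divisors contribute only primes lying in $\primes(d+1)$.

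Next I would analyze the divisors $d' \mid d$ with $d' \ge D_0$. For each such $d'$, Theorem~\ref{thm:Sd_gen_1} gives $\Snew(d') \subseteq \primes(d'+1) \cup \set{2d'+1, 3d'+1}$, with the latter two primes present only when $d'$ is even. The subset $\primes(d'+1) \subseteq \primes(d+1)$ is harmless, so the question reduces to when the exceptional primes exceed $d+1$. The inequality $2d'+1 > d+1$ forces $d' > d/2$, hence $d' = d$, while $3d'+1 > d+1$ forces $d' > d/3$, hence $d' \in \set{d, d/2}$. Consequently, the only primes in $S(d)$ that can lie outside $\primes(d+1)$ are $2d+1$ and $3d+1$ (both arising from $d' = d$) and $\tfrac{3}{2}d+1$ (arising from $d' = d/2$, provided this divisor is itself an even integer).

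To finish I would split on the parity of $d$. If $d$ is odd, every divisor $d'$ of $d$ is odd, so none of the exceptional primes can occur and the analysis above yields $S(d) \subseteq \primes(d+1)$, giving part~(1). If $d$ is even, then the divisor $d' = d$ can contribute $2d+1$ or $3d+1$, and if additionally $4 \mid d$ then $d' = d/2$ is even and can contribute $\tfrac{3}{2}d+1$; all other divisors contribute only to $\primes(d+1)$. This gives $S(d) \subseteq \primes(d+1) \cup \set{\tfrac{3}{2}d+1, 2d+1, 3d+1}$, which is part~(2).

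There is no real obstacle: the argument is pure bookkeeping once Theorem~\ref{thm:Sd_gen_1} is in hand. The only care needed is to ensure $d$ is large enough that both $d \ge D_0$ holds and Merel's constant $M$ satisfies $M \le d+1$, which is automatic past a single explicit threshold depending only on $D_0$ and $M$ (both independent of $d$).
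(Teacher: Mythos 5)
Your proof is correct and follows exactly the route the paper intends (the paper's own proof is the one-line citation of Theorem~\ref{thm:Sd_gen_1} and the identity $S(d) = \bigcup_{d' \mid d} \Snew(d')$). Your bookkeeping — small divisors absorbed into $\primes(d+1)$ via Merel, and $2d'+1 > d+1 \Rightarrow d' = d$, $3d'+1 > d+1 \Rightarrow d' \in \{d, d/2\}$ — is the content the paper leaves implicit, and it is carried out correctly, including the parity analysis that makes $\tfrac{3}{2}d+1$ relevant only when $4 \mid d$.
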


\begin{proof}
  This follows from~\cref{thm:Sd_gen_1} together with $S(d) = \bigcup_{d' \mid d} \Snew(d')$.
\end{proof}

To obtain more precise results, we note that we can mimic for~$S_2(\Gamma_0(p))$
and the eigen\-spaces of the Atkin-Lehner involution what we
did in~\cref{sec:strange} using the splitting of~$S_2(\Gamma_1(p))$
according to characters. We do this in the next section.


\section{Small degree points on $X_0(p)$} \label{sec:X0}

Let $w$ denote the Fricke (=~Atkin-Lehner) involution on~$X_0(p)$.
Then the sign in the functional equation of $L(f, s)$, for $f$ a
weight~$2$ newform for~$\Gamma_0(p)$, is the negative of the eigenvalue
of~$w$ on~$f$. So the simple factors of~$J_0(p)$ on which $w$ acts
as multiplication by~$-1$ have analytic rank an even multiple of
their dimension (assuming that the analytic rank of a newform is invariant under
the Galois action, which is known when the analytic rank is $0$ or~$1$),
whereas the simple factors of the $w$-invariant
part~$J_0(p)^+$ (which is the Jacobian of
$X_0(p)^+ = X_0(p)/\langle w \rangle$) have analytic rank an odd
multiple of their dimension.

We now assume that there are no newforms that have positive
even analytic rank; equivalently, the minus part~$J_0(p)^-$ with
respect to the action of~$w$ has analytic rank zero and therefore
by~\cite{kolyvagin-logachev} Mordell-Weil rank zero, so that
$J_0(p)^-(\Q) \subseteq J_0(p)(\Q)_{\tors}$. Since the torsion
subgroup of~$J_0(p)(\Q)$ is cyclic and generated by the difference
of the two rational cusps, it follows that $T_2 - 3$ annihilates
$J_0(p)(\Q)_{\tors}$ (since $T_2$ acts on the cusps as multiplication
by~$3$); therefore, $(T_2 - 3)(w - 1)$ annihilates $J_0(p)(\Q)$.
Let $x \in X_0(p)^{(d)}(\Q)$ be a non-cuspidal point and write
$\infty \in X_0(p)(\Q)$ for the cusp that is the image of
$\infty \in \P^1(\Q) \subset \HH^*$. Then (arguing in a similar way
as in the proof of~\cref{prop:hecke_gon}, but in a simplified situation)
$[x - d \cdot \infty] \in J_0(p)(\Q)$, so
\[ (T_2 - 3) (w - 1) ([x - d \cdot \infty])
     = (T_2 - 3) (w - 1) ([x]) = 0 \in J_0(p) .
\]
Writing
\[ (T_2 - 3) (w - 1) = (T_2 w + 3) - (T_2 + 3 w) \]
as a difference of two effective correspondences on~$X_0(p)$,
we have that $(T_2 w + 3)(x)$ is linearly equivalent to
$(T_2 + 3 w)(x)$. So if $6 d$, which is the degree
of these two divisors, is less than $\gon_{\Q}(X_0(p))$, then
it follows that the divisors must be the same. So we have that
\[ (T_2 - 3) (w - 1) (x) = 0 \]
as divisors. Now~\cref{prop:hecke_op_kernel}, applied to $t = T_2 - 3$
on~$X_0(p)$, shows that $w(x) = x$, since there are no cusps in the
support of~$x$. This leads to the following result.

\begin{proposition} \label{prop:X0_w}
  Let $p > 3$ be a prime such that there are no newforms
  for~$\Gamma_0(p)$ that have positive even analytic rank.
  Let $d \ge 1$ be an integer. If $d < \gon_{\Q}(X_0(p))/6$
  and $P \in X_0(p)$ is a non-cuspidal point of degree~$d$,
  then either $P$ is a fixed point of the Fricke involution
  (and therefore corresponds to an elliptic curve with CM
  by an order of discriminant $-p$ or~$-4p$), or else $d$ is even
  and the image of~$P$ on~$X_0(p)^+$ is a point of degree~$d/2$.
\end{proposition}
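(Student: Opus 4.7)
The plan is to follow the setup already prepared in the discussion preceding the statement, which does most of the work. Write $x \in X_0(p)^{(d)}(\Q)$ for the effective divisor cut out by the Galois orbit of~$P$. The hypothesis on newforms, via \cite{kolyvagin-logachev}, forces $J_0(p)^-(\Q)$ to be torsion, and since the rational torsion of~$J_0(p)$ is cyclic generated by the cuspidal difference and $T_2$ acts as~$3$ on the cusps, the operator $(T_2 - 3)(w - 1)$ annihilates all of~$J_0(p)(\Q)$. Applying this to $[x - d \cdot \infty]$ and rewriting $(T_2 - 3)(w - 1) = (T_2 w + 3) - (T_2 + 3w)$ as a difference of effective correspondences gives the linear equivalence $(T_2 w + 3)(x) \sim (T_2 + 3w)(x)$ between two effective divisors of degree~$6d$.

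Under the hypothesis $d < \gon_{\Q}(X_0(p))/6$ I would then invoke \cref{lem:gon_0} to upgrade this linear equivalence to an actual equality of divisors on~$X_0(p)$. Rearranging yields $(T_2 - 3)(w(x) - x) = 0$ on the divisor group. Applying \cref{prop:hecke_op_kernel} with $h_1 = x$ and $h_2 = 3$ (so $t = T_2 - 3$) concludes that $w(x) - x$ is supported on cusps; since $x$ contains no cusp in its support by hypothesis and $w$ permutes the cuspidal subscheme, $w(x)$ contains no cusps either, forcing $w(x) = x$ as divisors.

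The last step translates this back to the single point~$P$. Since $w$ is defined over~$\Q$, it commutes with the absolute Galois action and therefore permutes the $d$-element Galois orbit of~$P$; the equality $w(x) = x$ says the orbit is $w$-stable. If $w(P) = P$, we are in the first case of the conclusion, and I would invoke the classical description of $w$-fixed points on~$X_0(p)$: such a point corresponds to a pair $(E, C)$ with $(E, C) \simeq (E/C, E[p]/C)$, so composing the cyclic $p$-isogeny $E \to E/C$ with an isomorphism $E/C \simeq E$ yields an endomorphism of~$E$ of degree~$p$, forcing $E$ to have CM by an order in $\Q(\sqrt{-p})$ of discriminant $-p$ or $-4p$. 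Otherwise $w$ acts as a fixed-point-free involution on the Galois orbit of~$P$, so $d$ is even, and the $d/2$ pairs $\{Q, w(Q)\}$ descend to $d/2$ distinct Galois-conjugate points on~$X_0(p)^+$, producing a point there of degree~$d/2$.

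The only step requiring any genuine thought beyond bookkeeping is the CM statement for Fricke-fixed points, which is classical; I would quote it rather than rederive it. Everything else is essentially a transcription of the paragraphs immediately preceding the statement, with the final case analysis being the one small addition.
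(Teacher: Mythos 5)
Your proposal is correct and follows essentially the same route as the paper: it recapitulates the discussion preceding the statement (the annihilation of $J_0(p)(\Q)$ by $(T_2-3)(w-1)$, the passage to divisors via the gonality bound and \cref{lem:gon_0}, and \cref{prop:hecke_op_kernel}) and then performs the two-case analysis on the $w$-orbit structure of the Galois orbit of~$P$, exactly as the paper does. The only stylistic difference is that the paper phrases the uniformity of $w$-orbit lengths across the Galois orbit as ``since $w$ is defined over~$\Q$, all these orbits have the same length,'' while you argue it by noting $w$ commutes with Galois so fixedness is a Galois-invariant property; these are the same observation.
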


\begin{proof}
  The discussion preceding the statement of the proposition shows
  that under the assumptions made, the Galois orbit of~$P$ is a
  union of $w$-orbits. Since $w$ is defined over~$\Q$, all these
  orbits have the same length. There are then two possibilities.
  \begin{enumerate}[(1)]
    \item The orbit length is $1$. Then $P$ is a fixed point of~$w$.
          If $(E, C)$ is the elliptic curve with a subgroup of order~$p$
          corresponding to~$P$, then this implies that there is an
          endomorphism~$\alpha$ of~$E$ such that $\alpha^2 = \pm p$
          (since $p \ge 5$, $\Aut(E) = \{\pm 1\}$). The positive sign
          is impossible, so $E$ must have CM by an order
          containing~$\sqrt{-p}$.
    \item The orbit length is $2$. Then $d$ is even, and the images
          on~$X_0(p)^+$ of the Galois conjugates of~$P$ correspond
          to the $d/2$ orbits under~$w$ among these points. This
          implies that the image of~$P$ on~$X_0(p)^+$ is a point
          of degree~$d/2$.
    \qedhere
  \end{enumerate}
\end{proof}

According to the \href{http://www.lmfdb.org/ModularForm/GL2/Q/holomorphic/?hst=List&level_type=prime&weight=2&char_order=1&analytic_rank=2-&search_type=List}{LMFDB},
exactly~$111$ of the $1229$ primes up to~$10\,000$ have the property that
there are weight~$2$ newforms for~$\Gamma_0(p)$ of positive even analytic rank.
They are listed in~\cref{table:bad_G0p}. We note that Brumer~\cite{brumer} has a
table listing the nontrivial splittings into Galois orbits of newforms with negative
Fricke eigenvalue; this table also gives what we denote $\dim(A)$ here as the
``number of rels''. Our table is consistent with his, except that he seems to have
missed the positive rank factors at levels $2333$ and~$2381$.

\begin{table}[htb]
\[ \renewcommand{\arraystretch}{1.2}
   \setlength{\doublerulesep}{5pt} 
  \begin{array}{|r|*{9}{c|}} \hline
    p          &  389 &  433 &  563 &  571 &  643 &  709 &  997 & 1061 & 1171 \\\hline
    \dim(A)    &    1 &    1 &    1 &    1 &    1 &    1 &    2 &    2 &    1 \\\hline
    \hline
    p          & 1483 & 1531 & 1567 & 1613 & 1621 & 1627 & 1693 & 1873 & 1907 \\\hline
    \dim(A)    &    1 &    1 &    3 &    1 &    1 &    1 &    3 &    1 &    1 \\\hline
    \hline
    p          & 1913 & 1933 & 2027 & 2029 & 2081 & 2089 & 2251 & 2293 & 2333 \\\hline
    \dim(A)    &    3 &    1 &    1 &    2 &    2 &    1 &    1 &    2 &    4 \\\hline
    \hline
    p          & 2381 & 2593 & 2609 & 2617 & 2677 & 2797 & 2837 & 2843 & 2861 \\\hline
    \dim(A)    &    2 &    4 &    2 &    2 &    1 &    1 &    1 &    4 &    2 \\\hline
    \hline
    p          & 2953 & 2963 & 3019 & 3089 & 3271 & 3463 & 3583 & 3701 & 3779 \\\hline
    \dim(A)    &    1 &    2 &    2 &    2 &    3 &    2 &    2 &    2 &    1 \\\hline
    \hline
    p          & 3911 & 3943 & 3967 & 4027 & 4093 & 4139 & 4217 & 4253 & 4357 \\\hline
    \dim(A)    &    2 &    4 &    1 &    2 &    2 &    1 &    2 &    3 &    1 \\\hline
    \hline
    p          & 4481 & 4483 & 4547 & 4787 & 4799 & 4951 & 5003 & 5171 & 5323 \\\hline
    \dim(A)    &    1 &    2 &    1 &    2 &    1 &    2 &    3 &    1 &    3 \\\hline
    \hline
    p          & 5351 & 5471 & 5477 & 5737 & 5741 & 5749 & 5813 & 5821 & 6007 \\\hline
    \dim(A)    &    2 &    3 &    2 &    3 &    1 &    2 &    1 &    2 &    2 \\\hline
    \hline
    p          & 6011 & 6043 & 6199 & 6337 & 6571 & 6581 & 6691 & 6949 & 7019 \\\hline
    \dim(A)    &    1 &    1 &    1 &    2 &    1 &    3 &    1 &    3 &    1 \\\hline
    \hline
    p          & 7451 & 7541 & 7621 & 7639 & 7669 & 7753 & 7867 & 7919 & 7933 \\\hline
    \dim(A)    &    1 &    1 &    2 &    2 &    1 &    3 &    1 &    2 &    2 \\\hline
    \hline
    p          & 8117 & 8219 & 8363 & 8369 & 8443 & 8513 & 8699 & 8747 & 9011 \\\hline
    \dim(A)    &    3 &    1 &    1 &    2 &    1 &    3 &    1 &    1 &    4 \\\hline
    \hline
    p          & 9127 & 9161 & 9203 & 9277 & 9281 & 9467 & 9479 & 9781 & 9829 \\\hline
    \dim(A)    &    1 &    1 &    1 &    1 &    2 &    1 &    1 &    2 &    2 \\\hline
    \hline
    p          & 9857 & 9907 & 9967 &      &      &      &      &      &      \\\hline
    \dim(A)    &    3 &    2 &    2 &      &      &      &      &      &      \\\hline
  \end{array}
\]
\caption{Primes $p < 10\,000$ such that there exist weight~$2$ newforms
         for~$\Gamma_0(p)$ with positive even analytic rank
         and dimensions of the corresponding part of~$J_0(p)^-$.}
\label{table:bad_G0p}
\end{table}

In most cases, there is only one Galois orbit of newforms with positive
even analytic rank. The exceptions are $p = 997$ with two newforms defined
over~$\Q$, $p = 1913$ with one orbit of size~$1$ and one of size~$2$,
$p = 2843$ with one orbit of size~$1$ and one of size~$3$, and
$p = 9829$ with two newforms defined over~$\Q$. \cref{table:bad_G0p} also
gives the number of newforms of positive even analytic rank, which is the
same as the dimension of the smallest abelian subvariety of~$J_0(p)^-$
whose group of rational points has the same rank as~$J_0(p)^-(\Q)$.

When there are newforms~$f$ with positive even analytic rank,
then we can use a polynomial~$h$ that has all $a_2(f)$ as roots and
work with $(T_2 - 3) h(T_2) (w - 1)$. This gives the following,
which is similar in spirit to~\cref{cor:p_fixed_var}.

\begin{proposition} \label{prop:X0_w_gen}
  Given $N \ge 0$, there is a number $\eps_N > 0$ such that
  whenever $p > 3$ is a prime such that there are at most~$N$
  weight~$2$ newforms for~$\Gamma_0(p)$ with positive even
  analytic rank and $1 \le d \le \eps_N p$ is an integer, then
  any non-cuspidal point~$P$ of degree~$d$ on~$X_0(p)$ is either a fixed point
  of~$w$ (and thus a CM~point) or $d$ is even and $P$ maps to a point of degree~$d/2$
  on~$X_0(p)^+$. We can take
  \[ \eps_N = \frac{325}{3 \cdot 2^{16} \floor{(2\sqrt{2} + 3)^N}} . \]
\end{proposition}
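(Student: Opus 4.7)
The plan is to mimic the argument proving \cref{prop:X0_w}, inserting an auxiliary polynomial factor $h(T_2)$ to neutralize the up to~$N$ newforms of positive even analytic rank. Let $f_1,\dots,f_m$ with $m \le N$ be these newforms, taken to form a Galois-stable set, and set
\[
  h(x) = \prod_{i=1}^{m}(x - a_2(f_i)) \in \Z[x].
\]
By the Ramanujan bound $|a_2(f_i)| \le 2\sqrt{2}$ and Vieta's formulas, the $x^j$-coefficient~$h_j$ of~$h$ satisfies $|h_j| \le \binom{m}{j}(2\sqrt{2})^{m-j}$.

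I first claim that $t := (T_2 - 3)\, h(T_2)\, (w - 1)$ annihilates $J_0(p)(\Q)$. Indeed, $(w-1) J_0(p)(\Q)$ lies in the minus part $J_0(p)^-(\Q)$; applying $h(T_2)$ kills the simple factors corresponding to the~$f_i$, leaving only newform factors of analytic rank zero, which by Kolyvagin--Logachev have Mordell--Weil rank zero. Hence $h(T_2)(w-1) J_0(p)(\Q)$ is contained in the rational torsion, which is cyclic and generated by the cusp difference $[(0) - (\infty)]$ on which $T_2$ acts as multiplication by~$3$; hence $(T_2 - 3)$ annihilates it, as recorded in \cref{prop:ann_rat_tors}.

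Given a non-cuspidal $x \in X_0(p)^{(d)}(\Q)$, we therefore have $t(x) \sim 0$ on $X_0(p)$. Writing $(T_2-3)h(T_2) = \sum_j c_j T_2^j$ and grouping by sign of the $c_j$, I decompose $t = A - B$ into a difference of effective correspondences of common degree $\sum_j |c_j|\, 3^j$. Using $|c_j| \le |h_{j-1}| + 3|h_j|$ (with $h_{-1} = 0$), this sum is at most
\[
  6 \sum_k |h_k|\, 3^k \le 6(2\sqrt{2} + 3)^m,
\]
and, being a positive integer, it is at most $6 \floor{(2\sqrt{2} + 3)^N}$. The assumption $d \le \eps_N p$ together with $\gon_{\Q}(X_0(p)) \ge \tfrac{325}{2^{15}}(p+1)$ from \cref{thm:gon_XH} then yields $\deg A(x), \deg B(x) < \gon_{\Q}(X_0(p))$, so \cref{lem:gon_0} upgrades the linear equivalence $A(x) \sim B(x)$ to the divisorial identity $t(x) = 0$.

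Finally, decompose $(T_2 - 3)h(T_2) = h_1(T_2) - h_2(T_2)$ by separating positive and negative coefficients, obtaining $h_1$ monic of degree $m + 1$ and $h_2$ of degree $\le m$, both with nonnegative integer coefficients. Applying \cref{prop:hecke_op_kernel} with $q = 2$ to the divisor $D := (w-1)(x)$, which is annihilated by this operator, shows that $D$ is supported on cusps; since $x$ contains no cusps and $w$ permutes the two rational cusps among themselves, $(w-1)(x)$ has no cusp components and hence vanishes, forcing $w(x) = x$. The remainder of the argument is now identical to the proof of \cref{prop:X0_w}: the common length of the $w$-orbits on the Galois conjugates of $P$ is either~$1$ (so $P$ is a fixed point of~$w$, corresponding to CM by an order of discriminant $-p$ or $-4p$) or~$2$ (so $d$ is even and the image of $P$ on $X_0(p)^+$ has degree $d/2$). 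The main technical obstacle is the careful sign bookkeeping needed to extract the sharp factor $\floor{(2\sqrt{2}+3)^N}$ from the expansion of $(T_2-3)h(T_2)(w-1)$; the conceptual input is the combination of the Atkin--Lehner decomposition of $J_0(p)^-$ with the Kolyvagin--Logachev vanishing theorem.
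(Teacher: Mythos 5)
Your proposal follows essentially the same route as the paper's proof: form $h(x) = \prod_i(x - a_2(f_i))$ from the $a_2$-eigenvalues of the (Galois-stable set of) positive even analytic rank newforms, observe that $(T_2 - 3)\,h(T_2)\,(w-1)$ annihilates $J_0(p)(\Q)$ via the decomposition of $J_0(p)^-$ and Kolyvagin--Logachev, bound the degree of the two effective correspondences whose difference equals this operator, use the gonality lower bound of \cref{thm:gon_XH} together with \cref{lem:gon_0} to upgrade the linear equivalence $A(x) \sim B(x)$ to a divisorial identity, and then apply \cref{prop:hecke_op_kernel} (with $q = 2$ and $(T_2-3)h(T_2)$) to $(w-1)(x)$ to conclude $w(x) = x$. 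The remainder then coincides with \cref{prop:X0_w}. This is precisely the paper's argument, which is stated there very tersely as a combination of the computations in \cref{cor:p_fixed_var} and \cref{prop:X0_w}.

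One small step in your degree bound is stated incorrectly, although the conclusion stands. You pass from ``the degree is at most $6(2\sqrt{2}+3)^m$ and is a positive integer'' to ``the degree is at most $6\floor{(2\sqrt{2}+3)^N}$.'' An integer bounded by $6\alpha$ is only guaranteed to be $\le \floor{6\alpha}$, which can exceed $6\floor{\alpha}$ (take $\alpha = 1.5$). The floor has to be applied one step earlier: the quantity $\sum_k |h_k|\,3^k$ is itself a nonnegative integer and is bounded by $\sum_k \binom{m}{k}(2\sqrt{2})^{m-k}3^k = (2\sqrt{2}+3)^m \le (2\sqrt{2}+3)^N$, hence $\sum_k |h_k|\,3^k \le \floor{(2\sqrt{2}+3)^N}$; multiplying by $6$ then gives the stated bound. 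This is exactly how the paper's computation in the proof of \cref{cor:p_fixed_var} handles the floor, and it is what makes the constant $\eps_N = 325/(3 \cdot 2^{16}\floor{(2\sqrt{2}+3)^N})$ come out. With that repair your argument matches the paper's.
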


\begin{proof}
  Let $p$ be a prime as in the statement, and let $f_1, \ldots, f_m$
  be the $m \le N$ weight~$2$ newforms for~$\Gamma_0(p)$ that have
  positive even analytic rank. As in the proof of~\cref{cor:p_fixed_var},
  the monic polynomial
  \[ h = \prod_{j=1}^m (x - a_2(f_j)) \in \Z[x] \]
  of degree~$m$ has roots bounded in absolute value by~$2\sqrt{2}$,
  so the coefficient~$h_j$ of~$x^j$ is bounded by
  $\binom{m}{j} (2\sqrt{2})^{m-j}$.
  Similarly to our previous considerations, it follows that
  $t = (T_2 - 3) h(T_2) (w - 1)$ annihilates~$J_0(p)(\Q)$, and
  $t$ can be written as a difference of
  effective correspondences of degree at most $6 \floor{(2 \sqrt{2} + 3)^N}$.
  The argument in the proof of~\cref{prop:X0_w}
  (using~\cref{prop:hecke_op_kernel} with $(T_2 - 3) h(T_2)$)
  then shows the claim for $d$ such that
  \[ d < \frac{\gon_{\Q}(X_0(p))}{6 \floor{(2 \sqrt{2} + 3)^N}} . \]
  Together with the gonality bound from~\cref{thm:gon_XH},
  this gives the stated value for~$\eps_N$.
\end{proof}

Obviously, depending on the specific~$a_2(f_j)$, we may be able to
improve the bound on~$d$ in concrete cases.

\begin{definition}
  For a prime~$p$, write $\perdim(p)$ (``positive even rank dimension'')
  for the number of weight~$2$
  newforms~$f$ for~$\Gamma_0(p)$ such that $w(f) = -f$ and $L(f,1) = 0$.
\end{definition}

Similarly to~\cref{conj:str_unif}, we formulate the following conjecture.

\begin{conjecture} \label{conj:X0} \strut
  \begin{enumerate}[\upshape(1)]
    \item \label{conj:X0_1}
          (Weak form)
          \[ \lim_{p \to \infty} \frac{\perdim(p)}{\log p} = 0 \]
          as $p$ runs through all prime numbers.
    \item \label{conj:X0_2}
          (Strong form)
          The positive even rank dimension~$\perdim(p)$ is uniformly bounded
          as $p$ runs through all prime numbers.
  \end{enumerate}
\end{conjecture}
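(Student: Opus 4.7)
The plan is to mirror the heuristic framework already laid out for~\cref{conj:str_unif} in the discussion following that conjecture, which applies \emph{mutatis mutandis} here. Recall that $\perdim(p)$ counts the weight-$2$ newforms $f$ for~$\Gamma_0(p)$ with $w(f)=-f$ and $L(f,1)=0$; since the sign in the functional equation of such an~$f$ is~$+1$, any such vanishing forces the analytic rank of~$f$ to be an even integer of size at least~$2$. The overall strategy is to reduce the bound on~$\perdim(p)$ to two independent inputs: a Maeda-type structural statement for the minus part of~$S_2(\Gamma_0(p))$, and a nonvanishing statement of Iwaniec--Sarnak type for $L(f,1)$.

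First, I would invoke the following analogue of Maeda's Conjecture for $S_2(\Gamma_0(p))^-$: the Hecke algebra acting on this space should admit one very large Galois orbit of newforms together with at most a bounded (or at worst $o(\log p)$-sized) total dimension coming from small orbits. This is consistent with~\cref{table:bad_G0p}, in which nearly every listed prime has a single small orbit accounting for $\perdim(p)$. Combining this with the fact that Galois conjugates of a newform share a common analytic rank (conjecturally, and known when the rank is~$0$ or~$1$), it would suffice to exhibit one newform in the designated large orbit with $L(f,1)\neq 0$ in order to eliminate that entire orbit from contributing to $\perdim(p)$. For this second input one would appeal to a refinement of the Iwaniec--Sarnak nonvanishing theorem, which already establishes that a positive proportion of weight-$2$ newforms for~$\Gamma_0(p)$ satisfy $L(f,1)\neq 0$ as $p\to\infty$; with such a refinement, $\perdim(p)$ would be bounded by the total dimension of the small orbits, yielding both parts of the conjecture (strong form from a bounded small-orbit total, weak form from an $o(\log p)$ bound).

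The main obstacle is that both inputs lie well beyond what is currently provable. Maeda-type statements are open even in level~$1$, so obtaining the required structural decomposition at prime level for the $w=-1$ eigenspace looks genuinely difficult. Even more delicate is extracting a specific non-vanishing newform inside a prescribed Galois orbit: the mollifier method of Iwaniec and Sarnak produces an averaged statement over all newforms of the level and does not directly single out one orbit. Consequently, what one can realistically deliver at present is not a proof but a conditional reduction to these two standard expectations, accompanied by computational evidence extending~\cref{table:bad_G0p} to larger ranges of~$p$ so as to test whether $\perdim(p)$ really does remain uniformly bounded.
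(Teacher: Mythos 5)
This statement is a conjecture, and the paper accordingly offers no proof but rather heuristic support; your proposal does exactly the same, and the two main heuristic inputs you identify (a Maeda-type structure for the $w=-1$ eigenspace of $S_2(\Gamma_0(p))$, plus the Iwaniec--Sarnak nonvanishing theorem) are precisely the ones the paper invokes. The one point where you slightly misread the structure of the argument is in treating ``extracting a specific non-vanishing newform inside a prescribed Galois orbit'' as an independent obstacle: under the Maeda-type assumption the large orbit comprises significantly more than half of the newforms in the eigenspace, so the unconditional Iwaniec--Sarnak bound (at most $\tfrac12+\eps$ of the even-analytic-rank newforms have positive analytic rank) already forces that orbit to contain a newform with $L(f,1)\neq 0$, whence (granting Galois-invariance of analytic rank) the entire large orbit is eliminated; there is no need for a refinement of the mollifier argument that singles out one orbit. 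The paper also goes further than you do, noting Martin's Conjecture~A and Cowan's tabulation of Galois orbits for $p<2\cdot 10^6$ as concrete evidence for the Maeda-type input, and independently supporting the strong form via the Park--Poonen--Voight--Wood and Watkins heuristics for the count of positive even rank elliptic curves of prime conductor together with LMFDB data on prime conductors up to $3\cdot 10^8$; that second line of reasoning is absent from your proposal and provides meaningfully different evidence for uniform boundedness.
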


This would give a strong form of the answer ``yes'' to Problem~1 in~\cite{brumer}
and to the similar question Murty asks at the end of page~264 in~\cite{murty}.

Cowan~\cite{cowan} has computed all Galois orbits of newforms for~$\Gamma_0(p)$
of weight~$2$ and prime level $p < 2 \cdot 10^6$. The data for $p < 10^6$ is
\href{https://www.lmfdb.org/ModularForm/GL2/Q/holomorphic/?level_type=prime&level=10000-&weight=2&dim=1-6}{available}
as part of the~LMFDB. He finds that for all $10^4 < p < 10^6$, each of the two Atkin-Lehner
eigenspaces contains orbits of size at most~$6$, together with one large orbit.
There is only one orbit of size~$6$ (which has negative
Atkin-Lehner sign) for $p = 171\,713$, there are two orbits of size~$5$ (both with
negative Atkin-Lehner sign) for $p = 26777, 86161$, and ten of size~$4$, of which
six have negative Atkin-Lehner sign and occur for $p = 13681, 28057, 35977, 63607, 185599, 794111$.
The total size of these small orbits with negative Atkin-Lehner sign for $10^4 < p < 10^6$
is at most~$7$. This extends earlier work by Martin~\cite{martin} (who considers
more general weights and levels) in the case of weight~$2$ and prime level.
Martin formulates a conjecture (Conjecture~A in loc.~cit.)
that essentially says (taking $r = 1$) that for 100\% of the primes, each of the
two Atkin-Lehner eigenspaces contains only one Galois orbit of newforms.
Assuming this, \cite{martin}*{Thm.~1} concludes that 100\% of all newforms of weight~$2$
for~$\Gamma_0(p)$ with $p$ prime that have root number~$+1$ have analytic rank zero,
implying that primes~$p$ with $\perdim(p) > 0$ are sparse. Cowan's data
may suggest that there are no further Galois orbits of size at least~$5$ and
smaller than (say) a quarter of the number of newforms in the relevant
Atkin-Lehner eigenspace, and that Galois orbits of sizes $3$ or~$4$ are rare.
The recent preprint~\cite{cowan-martin} by Cowan and Martin has some heuristics
supporting a statement along these lines.

Based on Cowan's data (as provided by the LMFDB), we determined the
primes~$p < 10^6$ such that $\perdim(p) > 4$. We found four such primes:
\begin{gather*}
  \perdim(86\,161) = 5\,, \qquad \perdim(89\,209) = 5\,, \\
  \perdim(133\,117) = 5\,, \qquad \perdim(171\,713) = 6\,.
\end{gather*}
This gives some (perhaps weak) support for the strong form of the conjecture above.

There are some related results and conjectures.
A result of Iwaniec and Sarnak~\cite{iwaniec-sarnak}*{Theorem~6}
(with $k = 2$, $D = 1$ and $N$ running through the primes)
shows that for any $\varepsilon > 0$, there is some~$p_0(\varepsilon)$
such that for every prime $p > p_0(\varepsilon)$, the proportion of newforms of
positive analytic rank among those of even analytic rank is at most $\tfrac{1}{2} + \varepsilon$.
(Assuming GRH, \cite{ILS}*{(1.54)} improves this to $\tfrac{7}{16} + \varepsilon$.
The expectation via the ``Density Conjecture'' is that this can be reduced to~$\varepsilon$.)
If we combine this with a version of Maeda's Conjecture in this setting
(a fairly weak form of which is formulated in~\cite{brumer}*{Problem~4}), which
says that the newforms in each of the two Atkin-Lehner eigenspaces of cusp forms
of weight~$2$ for~$\Gamma_0(p)$ should form one large Galois orbit and only
very few small ones (note that this is supported very well by Cowan's data
mentioned above), we obtain that $\perdim(p)$ is bounded by the total size of the
small Galois orbits since the large one will make up significantly more than half
of the newforms, so by the Iwaniec-Sarnak result all its newforms will have analytic
rank zero.

Assuming that small (say, less than a quarter the dimension of the ambient space) orbit sizes
are bounded and orbits of size $\ge 2$ are rare (so that the total size of small orbits
of size at least~$2$ is bounded), $\perdim(p)$ is determined, up to a bounded amount,
by the number of isogeny classes of elliptic curves of conductor~$p$ of positive even (analytic) rank.
According to~\cite{PPVW}, the number of isogeny classes of elliptic curves of rank
at least~$2$ and height~$\le X$ should be of the order of~$X^{19/24}$, and one can
expect that this should also hold for conductor~$\le X$. This would predict about
a constant times~$p^{-5/24}$ positive even rank elliptic curves of (prime) conductor~$p$
on average. Watkins~\cite{watkins}*{Heuristic~3.1} has the more precise asymptotics
$\sim X^{19/24} (\log X)^{3/5}$ (for curves with discriminant bounded by~$X$).
The LMFDB has complete data on elliptic curves of prime conductor up to
$300$~million. Counting positive even rank elliptic curves of prime conductors
between successive multiples of a~million shows good agreement with the Watkins heuristic
(the constant factor derived from the data varies within a fairly small interval,
with no clear tendency to grow or shrink).
This then suggests that there should be only finitely many primes~$p$ such that
there are five or more positive even rank elliptic curves of conductor~$p$.
Indeed, within the range of the LMFDB, there are only three such primes:
$p = 4\,297\,609$ and $p = 151\,141\,051$ with five such curves, and $p = 161\,137\,637$
with six. There are six primes with four such curves, and then $109$ with three
and $2224$ with two. We think that this is a fairly strong indication that even
the strong form of~\cref{conj:X0} might hold.

\begin{theorem} \label{thm:X0}
  The weak form of \cref{conj:X0} implies the following.
  \begin{enumerate}[\upshape(i)]
    \item Fix an odd integer $d \ge 1$. There is $p_0(d)$ such that for
          all primes $p \ge p_0(d)$, there are no non-cuspidal points of
          degree~$d$ on~$X_0(p)$.
    \item Fix an even integer $d \ge 1$. There is $p_0(d)$ such that for
          all primes $p \ge p_0(d)$, every non-cuspidal point of
          degree~$d$ on~$X_0(p)$ maps to a point of degree~$d/2$ on~$X_0(p)^+$.
  \end{enumerate}
\end{theorem}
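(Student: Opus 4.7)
Fix $d \ge 1$. The plan is to combine \cref{prop:X0_w_gen} with the conjectural sparsity of positive even rank newforms to reduce every non-cuspidal degree-$d$ point on $X_0(p)$ to one of two cases, (a) a CM fixed point of the Fricke involution, or (b) a point descending to degree $d/2$ on $X_0(p)^+$, and then to eliminate case~(a) for large $p$ by a class-number argument.

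First I would check that the hypothesis $d \le \eps_{\perdim(p)}\, p$ of \cref{prop:X0_w_gen} holds for all sufficiently large~$p$. Under \cref{conj:X0}\eqref{conj:X0_1}, $\perdim(p) = o(\log p)$, so
\[ \eps_{\perdim(p)} \cdot p
     \;=\; \frac{325\, p}{3 \cdot 2^{16} \lfloor (2\sqrt{2}+3)^{\perdim(p)} \rfloor}
     \;\ge\; \frac{325\, p}{3 \cdot 2^{16}\, (2\sqrt{2}+3)^{\perdim(p)}}
     \;=\; p^{\,1 - o(1)} \]
tends to infinity with $p$, since $(2\sqrt{2}+3)^{\perdim(p)} = \exp(\perdim(p)\log(2\sqrt{2}+3)) = p^{o(1)}$. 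Hence for all $p \ge p_1(d)$, \cref{prop:X0_w_gen} applies with $N = \perdim(p)$, and every non-cuspidal degree-$d$ point~$P$ on~$X_0(p)$ either (a)~is fixed by $w$ and corresponds to an elliptic curve with CM by an order~$\mathcal{O}$ of discriminant $-p$ or $-4p$, or (b)~$d$ is even and $P$ maps to a point of degree~$d/2$ on~$X_0(p)^+$.

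Next I would rule out case~(a) for $p$ large. If $P$ is such a CM point, then $j(E)$ lies in the residue field $k(P)$, so $h(\mathcal{O}) = [\mathbb{Q}(j(E)):\mathbb{Q}] \le [k(P):\mathbb{Q}] = d$. By Heilbronn's theorem (or Siegel's lower bound), $h(\mathcal{O}) \to \infty$ as $|\disc(\mathcal{O})| \to \infty$, so only finitely many primes~$p$ can admit such a CM point of degree at most~$d$; let $p_2(d)$ be their supremum. Setting $p_0(d) \colonequals \max(p_1(d), p_2(d))$, every non-cuspidal degree-$d$ point on~$X_0(p)$ for $p \ge p_0(d)$ falls into case~(b). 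When $d$ is odd, case~(b) is impossible, proving~(i); when $d$ is even, case~(b) is exactly the content of~(ii).

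The only delicate step is the first one: the factor $(2\sqrt{2}+3)^{\perdim(p)}$ in $\eps_{\perdim(p)}^{-1}$ has to be absorbed into a positive power of~$p$, which is possible precisely under the hypothesis $\perdim(p) = o(\log p)$. Everything else (\cref{prop:X0_w_gen} and the Heilbronn lower bound on class numbers) is off the shelf.
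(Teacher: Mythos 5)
Your proof is correct and follows the same approach as the paper: apply \cref{prop:X0_w_gen} once the hypothesis $\perdim(p) = o(\log p)$ absorbs the $(2\sqrt{2}+3)^{\perdim(p)}$ factor into $p$, then eliminate the CM fixed-point case by the unboundedness of class numbers. The paper observes more precisely that the degree of such a CM point is exactly $h(\mathcal{O})$, while you use only the inequality $h(\mathcal{O}) \le d$, but this makes no difference to the conclusion.
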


\begin{proof}
  Take $p_0(d)$ so that $\eps_{\perdim(p)} p > d$ for all $p \ge p_0(d)$.
  This is possible by assumption. (Indeed, the weaker hypothesis
  $\log p - \perdim(p) \log (2\sqrt{2} + 3) \to \infty$ as $p \to \infty$
  would be sufficient.) Then \cref{prop:X0_w_gen} shows that a point
  of degree~$d$ on~$X_0(p)$ is either a CM point associated to an order
  of discriminant $-p$ or~$-4p$, or else $d$ is even and the point
  maps to a point of degree~$d/2$ on~$X_0(p)^+$. If we increase $p_0(d)$
  if necessary so that the class numbers of the
  orders of discriminant $-p$ or~$-4p$ (when they exist) are larger than~$d$
  (which is possible, since these class numbers tend to infinity with~$p$),
  then the first possibility is excluded, since these points have degree
  equal to the class number.
\end{proof}

\begin{remark}
  For $d = 1$, it is a theorem due to Mazur~\cite{mazur2} that the set of
  primes~$p$ such that there is a non-cuspidal rational point on~$X_0(p)$
  is
  \[ \primes(19) \cup \set{37, 43, 67, 163} ; \]
  for $p \in \set{19, 43, 67, 163}$, these points are
  fixed points of the Fricke involution.

\end{remark}


\section{Stronger results on $S(d)$ for large~$d$}

We can obtain a stronger result than~\cref{thm:Sd_gen_1} if we also assume~\cref{conj:X0}.

\begin{theorem} \label{thm:Sd_gen_2}
  Assume Conjectures~\ref{conj:str_unif}~\eqref{conj:str_unif_3} and~\ref{conj:X0}~\eqref{conj:X0_1}.
  Fix $m \ge 1$. Then there is $d_0(m)$ such that for integers $d \ge d_0(m)$,
  \begin{enumerate}[\upshape(1)]
    \item if $d$ is odd,
          \[ \Snew(d) \subseteq \primes\Bigl(\frac{d}{m}\Bigr) \,; \]
    \item if $d$ is even,
          \[ \Snew(d) \subseteq \primes\Bigl(\frac{d}{m}\Bigr)
                        \cup \set{\frac{d}{k} + 1 : 1 \le k \le m, k \mid d}
                        \cup \set{2d + 1, 3d + 1} \,.
          \]
  \end{enumerate}
  If $d = k(p-1)$ with $1 \le k \le m$ and $P \in X_1(p)(K)$ is
  non-cuspidal with $[K : \Q] = d$, then $K$ is a cyclic Galois extension
  of a subfield~$L$ such that $[L : \Q] = 2k$.
\end{theorem}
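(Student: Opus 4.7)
The plan is to combine \cref{cor:d+1} (using \cref{conj:str_unif}~\eqref{conj:str_unif_3}) with \cref{thm:X0} (using \cref{conj:X0}~\eqref{conj:X0_1}) to narrow down which primes $p > d/m$ can lie in $\Snew(d)$ when $d$ is large, and then organise a single threshold $d_0(m)$ that activates all the various conditions simultaneously.

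First I would choose $d_0(m)$ large enough that, for $d \ge d_0(m)$ and any $p \in \Snew(d)$ with $p > d/m$, case~\eqref{cor:d+1_a} of \cref{cor:d+1} is excluded (which needs $d/m > \sqrt{Cd+1}$) and the inequality~\eqref{eq:p_str_ineq} of that corollary holds. For the latter, \cref{conj:str_unif}~\eqref{conj:str_unif_3} gives $\sdim(p) = o(\log p)$, so $\floor{(2\sqrt{2}+3)^{\sdim(p)}} = p^{o(1)}$ and the required bound is weaker than $p \ge d^{1/2+o(1)}$, which is implied by $p > d/m$ for large~$d$. Hence any such $p$ either lies in $\set{2d+1, 3d+1}$ (cases~\eqref{cor:d+1_b} and~\eqref{cor:d+1_c}, with $d$ even), or we are in the strong form of case~\eqref{cor:d+1_d}: $d = j(p-1)/2$ with a non-cuspidal point of degree $j = 2d/(p-1)$ on $X_0(p)$. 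The bound $p > d/m$ forces $j \le 2m$ once $d$ is large enough, since $p \ge \lceil d/m \rceil + 1$ then yields $2d/(p-1) \le 2m$.

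Next I would apply \cref{thm:X0} with $j$ playing the role of $d$ there, uniformly over $j \in \set{1, 2, \dots, 2m}$. Further enlarging $d_0(m)$ so that $p > d/m$ automatically exceeds $\max_{1 \le j \le 2m} p_0(j)$, any odd $j$ is ruled out, and any even $j = 2k$ forces the degree-$j$ point on $X_0(p)$ to map to a point of degree $k$ on $X_0(p)^+$. This gives $p = d/k + 1$ with $k \mid d$ and $1 \le k \le m$, proving the inclusion for even $d$. When $d$ is odd every divisor $k$ of $d$ is odd, so $d/k + 1$ is even and $\ne 2$ (for $d > m$), hence not prime; and $2d+1$, $3d+1$ are also excluded by parity (cases~\eqref{cor:d+1_b} and~\eqref{cor:d+1_c} require $d$ even). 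This yields $\Snew(d) \subseteq \primes(d/m)$ in the odd case.

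For the last sentence, suppose $d = k(p-1)$ with $1 \le k \le m$ and $P \in X_1(p)(K)$ is non-cuspidal with $[K:\Q] = d$. The analysis above identifies the image $P_0 \in X_0(p)$ as a point of degree $2k$ over~$\Q$, so I set $L = \Q(P_0)$, giving $[L:\Q] = 2k$ and $[K:L] = (p-1)/2$. The covering $X_1(p) \to X_0(p)$ is a cyclic Galois cover of degree $(p-1)/2$ defined over $\Q$, with Galois group $G = (\Z/p\Z)^\times/\set{\pm 1}$ acting through the diamond operators; therefore the fibre above $P_0$ is a $G$-torsor over $L$, classified by a homomorphism $\rho \colon \Gal(\bar L/L) \to G$. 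The stabiliser of any preimage is $\ker \rho$, so $K = L^{\ker \rho}$ is automatically Galois over $L$ with Galois group isomorphic to $\im \rho \subseteq G$, hence cyclic; the degree identity $[K:L] = (p-1)/2 = \#G$ forces $\im \rho = G$, so $K/L$ is cyclic of degree $(p-1)/2$, as claimed. The main obstacle I anticipate is purely bookkeeping: collating the thresholds from \cref{cor:d+1}, \cref{conj:str_unif}~\eqref{conj:str_unif_3} and \cref{thm:X0} into a single $d_0(m)$ that works uniformly in $j$ and $p$.
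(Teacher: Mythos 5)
Your proposal is correct and follows the same route as the paper's own proof: reduce via \cref{cor:d+1} and \cref{conj:str_unif}~\eqref{conj:str_unif_3} to a point of bounded degree $\mu \le 2m$ on $X_0(p)$, rule out odd $\mu$ by \cref{thm:X0} (enlarging $d_0(m)$ to cover all degrees up to $2m$ uniformly), and read off the parity consequences. The only point you gloss over is that the $G$-torsor description of the fibre over $P_0$ requires $P_0$ to be unramified for $X_1(p)\to X_0(p)$, but this is forced by the degree count $[K:L]=(p-1)/2$ that you already establish, so the argument closes.
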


\begin{proof}
  In the situation of~\cref{thm:Sd_gen_1} and its proof, we take $d$
  large enough so that $(Cd + 1)^{2/3} \le d/m$. Then for $p > d/m$
  such that $p \notin \set{2d+1, 3d+1}$, we get that $p = (2d/\mu) + 1$
  for some $1 \le \mu \le 2m$, and there are non-cuspidal points of
  degree~$\mu$ on~$X_0(p)$. If $d$ is also sufficiently large so that
  for primes $p > d/m$, the conclusion of~\cref{thm:X0} holds for
  all odd degrees below~$2m$, then it follows that $\mu = 2k$ must be even,
  hence $p = (d/k) + 1$. If $d$ is odd, the number $(d/k) + 1$ is even,
  so cannot be a prime.

  The statement on the fields~$K$ of definition of degree~$d$ points
  on~$X_1(p)$ follows from the facts that $X_1(p) \to X_0(p)$ is a cyclic
  Galois cover over~$\Q$ and that $P$ maps to a point of degree~$2k$
  on~$X_0(p)$.
\end{proof}

\begin{corollary}
  Assume Conjectures~\ref{conj:str_unif}~\eqref{conj:str_unif_3} and~\ref{conj:X0}~\eqref{conj:X0_1}.
  Fix $m \ge 1$. Then there is $d_0(m)$ such that for integers $d \ge d_0(m)$,
  \begin{enumerate}[\upshape(1)]
    \item if $d$ is odd,
          \[ S(d) \subseteq \primes\Bigl(\frac{d}{m}\Bigr) \,; \]
    \item if $d$ is even,
          \begin{align*}
            S(d) \subseteq \primes\Bigl(\frac{d}{m}\Bigr)
                        &\cup \set{\frac{d}{k} + 1 : 1 \le k \le m, k \mid d} \\
                        &\cup \set{2\frac{d}{k} + 1, 3\frac{d}{k} + 1 : 1 \le k \le 3m, 2k \mid d} \,.
          \end{align*}
  \end{enumerate}
  In particular, we then have
  \[ \lim_{d \to \infty, \,\text{\upshape $d$ odd}} \frac{\max S(d)}{d} = 0 \qquad\text{and}\qquad
     \limsup_{d \to \infty, \,\text{\upshape $d$ even}} \frac{\max S(d)}{d} = 3 \,.
  \]
\end{corollary}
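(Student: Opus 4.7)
The plan is to deduce the Corollary from \cref{thm:Sd_gen_2} by way of the decomposition $S(d) = \bigcup_{d' \mid d} \Snew(d')$. Fix $m \ge 1$ and let $\tilde d_0(m)$ be the threshold provided by \cref{thm:Sd_gen_2} at parameter~$m$. I split the divisors of $d$ into ``small'' divisors $d' < \tilde d_0(m)$ and ``large'' divisors $d' \ge \tilde d_0(m)$. The contribution from the small divisors is a finite set of primes depending only on~$m$ (there are only finitely many such $d'$, and each $S(d')$ is finite by Merel), and this set is contained in $\primes(d/m)$ once $d$ is sufficiently large.

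For a large divisor $d' \mid d$, write $j = d/d' \ge 1$. If $d'$ is odd, \cref{thm:Sd_gen_2} gives $\Snew(d') \subseteq \primes(d'/m) \subseteq \primes(d/m)$ since $d' \le d$. If $d'$ is even, I track the three families of primes appearing in the theorem. First, primes in $\primes(d'/m)$ again land in $\primes(d/m)$. Second, a prime of the form $d'/k + 1 = d/(jk) + 1$ with $1 \le k \le m$ and $k \mid d'$ satisfies $jk \mid d$, and either $jk \le m$, placing it in the target set $\{d/k + 1 : 1 \le k \le m,\ k \mid d\}$, or $jk > m$, in which case $d/(jk) + 1 \le d/(m+1) + 1 \le d/m$ provided $d \ge m(m+1)$. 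Third, the primes $2d' + 1 = 2d/j + 1$ and $3d' + 1 = 3d/j + 1$ automatically satisfy $2j \mid d$ (because $d'$ is even); when $j \le 3m$ they fit into $\{2d/k+1,\ 3d/k+1 : 1 \le k \le 3m,\ 2k \mid d\}$ with $k = j$, while when $j > 3m$ one has $3d/j + 1 \le 3d/(3m+1) + 1 \le d/m$ as soon as $d \ge m(3m+1)$, so they land in $\primes(d/m)$. Taking $d_0(m)$ large enough to accommodate all of these side conditions yields the claimed containment; when $d$ is odd every $d' \mid d$ is odd, so only the first case contributes and $S(d) \subseteq \primes(d/m)$.

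For the limit statements, the odd-$d$ containment gives $\max S(d) \le d/m$ for each fixed~$m$ and all $d \ge d_0(m)$, so $\max S(d)/d \to 0$. In the even case the containment forces $\max S(d) \le 3d + 1$ and hence $\limsup_{d \to \infty,\ d \text{ even}} \max S(d)/d \le 3$; for the reverse inequality, Dirichlet supplies infinitely many primes $p \equiv 1 \bmod 6$, and for each such $p$ the integer $d = (p-1)/3$ is even with $p = 3d+1 \in \Snew(d) \subseteq S(d)$ via the unconditional CM construction recalled in the introduction, so $\limsup \ge 3$ and equality holds. The main obstacle, such as it is, lies in the boundary bookkeeping for the special primes $2d'+1, 3d'+1$ coming from divisors $d'$ with $j = d/d'$ just above $3m$: handling these is what forces the explicit growth $d_0(m) \gtrsim m(3m+1)$ and thereby controls how $d_0(m)$ depends on~$m$. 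Everything else is a routine pairing of terms from \cref{thm:Sd_gen_2} with the target set.
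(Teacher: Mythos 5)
Your proof is correct and follows the same route as the paper: decompose $S(d) = \bigcup_{d'\mid d}\Snew(d')$, apply \cref{thm:Sd_gen_2} to the large divisors, absorb the finitely many small divisors into $\primes(d/m)$, and use the unconditional CM construction (at $j=0$) to pin down the even-$d$ limsup. The paper's proof is a one-line sketch; you have simply carried out the bookkeeping it leaves implicit (in particular the reindexing $k'=jk$ for the middle family and $k=j$ for the $\{2d'+1,3d'+1\}$ family, together with the thresholds $d\ge m(m+1)$ and $d\ge m(3m+1)$ that push the large-$j$ contributions below $d/m$), and your argument for both limit statements matches the paper's.
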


\begin{proof}
  The upper bound for~$S(d)$ follows from~\cref{thm:Sd_gen_2}
  and $S(d) = \bigcup_{d' \mid d} \Snew(d')$.

  That the limit is zero for odd~$d$ follows by taking $m$ arbitrarily large.

  The statement on the limit for even~$d$ follows by observing that
  there are infinitely many primes $p \equiv 1 \bmod 6$, so for $d = (p-1)/3$,
  we have $\max S(d) = 3d+1$ if $p$ is sufficiently large.
\end{proof}

Assuming the stronger forms of our conjectures, we can be more precise.
Let $h(D)$ denote the class number of the quadratic order of discriminant~$D$.
We set
\[ H(d) \colonequals \bigl\{\text{$p$ prime} : h(-4p) (p-1) = 2d \text{\ or\ }
                     \bigl(p \equiv 3 \bmod 4 \text{\ and\ } h(-p) (p-1) = 2d\bigr) \bigr\} \,.
\]

\begin{remark}
  It is known that $h(-p), h(-4p) = p^{1/2 + o(1)}$, which implies that
  \[ H(d) \subseteq \primes(d^{2/3+o_d(1)}) \]
  and that the elements of~$H(d)$ are bounded below by~$d^{2/3-o_d(1)}$.

  We also note that each prime $p \equiv 1 \bmod 4$ belongs to exactly
  one set~$H(d)$ (for $d = h(-4p) (p-1)/2$, which is even), each
  prime $p \equiv 7 \bmod 8$ also belongs to exactly one set~$H(d)$
  (for $d = h(-4p) (p-1)/2 = h(-p) (p-1)/2$, which is odd), and each
  prime $p \equiv 3 \bmod 8$ (with the exception of $p = 3$, which
  is in~$H(1)$ only) belongs to exactly two sets~$H(d)$
  (for $d = h(-p) (p-1)/2$ and $d = h(-4p) (p-1)/2 = 3 h(-p) (p-1)/2$,
  which are both odd). Since the associated degrees~$d$ are roughly of
  size~$p^{3/2}$, this implies that actually most sets~$H(d)$ are empty.
  For example, for $d \le 10\,000$, only $233$ sets~$H(d)$ are nonempty,
  of which $8$ have two elements and one has three elements (and there
  are no larger sets):
  \begin{gather*}
    H(33) = \set{ 23, 67 }\,, \quad
    H(315) = \set{ 127, 211 }\,, \quad
    H(1485) = \set{ 271, 331 }\,, \\
    H(1701) = \set{ 379, 487 }\,, \quad
    H(2625) = \set{ 251, 1051 }\,, \quad
    H(4095) = \set{ 631, 1171 }\,, \\
    H(7875) = \set{ 1051, 2251 }\,, \quad
    H(8415) = \set{ 991, 1123, 1531 }\,, \quad
    H(9009) = \set{ 859, 2003 }\,.
  \end{gather*}
\end{remark}

\begin{lemma} \label{lem:CM}
  For all $d \ge 1$, $H(d) \subseteq \Snew(d)$.
\end{lemma}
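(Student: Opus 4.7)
The plan is to construct, for each $p \in H(d)$, a non-cuspidal CM point on $X_1(p)$ of exact degree~$d$; this will place $p \in \Snew(d)$. The hypothesis $p \in H(d)$ singles out an order $\mathcal{O} \subset K := \Q(\sqrt{-p})$ with $h(\mathcal{O})(p-1) = 2d$: either $\disc\mathcal{O} = -4p$, or (only if $p \equiv 3 \pmod 4$) $\disc\mathcal{O} = -p$. In both cases $\sqrt{-p} \in \mathcal{O}$ is an element of norm~$p$. Let $E$ be an elliptic curve over $\Qbar$ with $\End(E) \cong \mathcal{O}$, set $C := \ker(\sqrt{-p}) \subset E$ (a cyclic subgroup of order~$p$), pick a nonzero $P \in C$, and consider $x := (E, P) \in X_1(p)(\Qbar)$.

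First I control the degree of the image $\bar x = (E, C) \in X_0(p)$. Since $p$ ramifies in $\mathcal{O}$, the $\mathcal{O}$-module $E[p]$ has a unique nontrivial submodule, namely $C$, so $C$ is intrinsically determined by $E$ together with its CM structure (independent of a choice of sign of $\sqrt{-p}$). Classical CM theory identifies $L := \Q(j(E))$ as a degree-$h(\mathcal{O})$ extension of $\Q$, the minimal polynomial of $j(E)$ being the Hilbert class polynomial of $\mathcal{O}$, and since $C$ is canonical, $\bar x$ is defined over $L$; conversely $j(E)$ is determined by $\bar x$ through the $j$-line, so $\bar x$ has degree exactly $h(\mathcal{O})$ over~$\Q$. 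I assume throughout that $\disc\mathcal{O} \notin \{-3, -4\}$, so $j(E) \notin \{0, 1728\}$; the only prime in $H(d)$ violating this is $p = 3 \in H(1)$, which is handled directly by any elliptic curve over $\Q$ with a rational 3-torsion point.

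The main task is to lift to $X_1(p)$: the fiber of $X_1(p) \to X_0(p)$ above $\bar x$ consists of $(p-1)/2$ geometric points (using $\Aut(E) = \{\pm 1\}$), indexed by the pairs $\{Q, -Q\}$ with $Q \in C \setminus \{0\}$. The group $G_L$ acts on $C$ through a character $\alpha : G_L \to \F_p^\times$, and the $G_L$-orbit of $\{P, -P\}$ has size $|\im\alpha|/|\im\alpha \cap \{\pm 1\}|$; showing this equals $(p-1)/2$ is the heart of the proof. The Weil pairing gives $\alpha\beta = \omega_p$ for the analogous character $\beta$ on $E[p]/C$; over the ring class field $H := K \cdot L$ the Galois action commutes with the nilpotent action of $\sqrt{-p}$ on $E[p]$, forcing $\alpha|_{G_H} = \beta|_{G_H}$ and hence $\alpha^2|_{G_H} = \omega_p|_{G_H}$. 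Combining this with a genus-theoretic computation of $H \cap \Q(\mu_p)$, a short case analysis in the cyclic group $\F_p^\times$ yields orbit size $(p-1)/2$ regardless of $p \bmod 4$. Multiplying by $[L : \Q] = h(\mathcal{O})$ then gives $[\Q(x) : \Q] = h(\mathcal{O})(p-1)/2 = d$, so $p \in \Snew(d)$. The main obstacle is this character-theoretic case analysis, further complicated in the non-maximal-order case $\disc\mathcal{O} = -4p$ with $p \equiv 3 \pmod 4$ where one must verify the CM identifications for $\mathcal{O} \ne \mathcal{O}_K$.
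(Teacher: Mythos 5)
Your overall strategy (CM curve with $\End(E)\cong\mathcal{O}$, $C=\ker\sqrt{-p}$, lift to $X_1(p)$ and compute the orbit size in the fiber) matches the paper's, and you correctly isolate the step that the paper states without justification, namely that pulling back the degree-$h(\mathcal{O})$ point from $X_0(p)$ to $X_1(p)$ yields a point of degree exactly $(p-1)/2\cdot h(\mathcal{O})$. However, your proposal does not actually carry out that step: the claim that ``a short case analysis in the cyclic group $\F_p^\times$ yields orbit size $(p-1)/2$ regardless of $p\bmod 4$'' is asserted, not proved, and you yourself flag it as ``the main obstacle.'' As written, this is a sketch rather than a proof.

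For what it is worth, the case analysis does go through. Writing $\alpha\colon G_L\to\F_p^\times$ for the character on $C$ and $H$ for the ring class field, your relation $\alpha^2|_{G_H}=\omega_p|_{G_H}$ together with the genus-theoretic fact that the maximal subfield of $H$ abelian over $\Q$ is $\Q(\sqrt{p})\cdot K$ when $p\equiv 1\pmod 4$ (class number even) and is $K$ itself when $p\equiv 3\pmod 4$ (class number odd, for both $\disc=-p$ and $\disc=-4p$), shows that $\omega_p(G_H)=(\F_p^\times)^2$ in all cases. Since $\alpha(G_H)$ is cyclic with $\alpha(G_H)^2=(\F_p^\times)^2$, one gets $\alpha(G_H)=\F_p^\times$ when $p\equiv 1\pmod 4$ and $\alpha(G_H)\in\{(\F_p^\times)^2,\F_p^\times\}$ when $p\equiv 3\pmod 4$; in the latter case $-1\notin(\F_p^\times)^2$, so in every case $\alpha(G_L)\cdot\{\pm1\}=\F_p^\times$ and the orbit of $\{P,-P\}$ has size $(p-1)/2$. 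You should supply this computation. Alternatively, one can invoke the explicit CM-torsion degree bounds of Silverberg / Clark--Cook--Stankewicz for the lower bound on $[\Q(E,P):\Q]$ and combine it with the fiber size for the upper bound, which is presumably what the paper has in mind when it simply asserts the degree of the pullback.
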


\begin{proof}
  Let $p \in H(d)$. First assume that $h(-4p) (p-1) = 2d$.
  There are elliptic curves with CM by~$\Z[\sqrt{-p}]$ defined over a number field
  of degree~$h(-4p)$. These curves give rise to points of degree~$h(-4p)$ on~$X_0(p)$,
  which are fixed points of~$w_p$, since they have an endomorphism of degree~$p$
  (given by $\sqrt{-p}$) defined over their field of definition.
  Pulling back to~$X_1(p)$, this gives points of degree $(p-1)/2 \cdot h(-4p) = d$,
  showing that $p \in \Snew(d)$.

  Now assume that $p \equiv 3 \bmod 4$ and $h(-p) (p-1) = 2d$. Similarly as above,
  there are elliptic curves with CM by~$\Z[(1+\sqrt{-p})/2]$ defined over a number field
  of degree~$h(-p)$, which give rise to points of degree~$h(-p)$ on~$X_0(p)$ as before.
  Pulling back to~$X_1(p)$, this gives points of degree $(p-1)/2 \cdot h(-p) = d$,
  again showing that $p \in \Snew(d)$.
\end{proof}

We also set
\[ D(d) \colonequals \bigl\{\text{$p$ prime} : p-1 \mid d \text{\ or\ }
                                               p = 2d+1 \text{\ or\ } p = 3d+1\bigr\} \,. \]

\begin{theorem} \label{thm:Sd_gen_strong}
  Assume~\cref{conj:X0}~\eqref{conj:X0_2} and~\cref{conj:str_unif}~\eqref{conj:str_unif_2}.
  Then there are constants $C_2 > C_1 > 0$ such that for all $d \ge 1$,
  \begin{enumerate}[\upshape(1)]
    \item when $d$ is odd,
          \[ \primes(\sqrt{C_1 d + 1}) \cup H(d) \subseteq \Snew(d)
                \subseteq \primes(\sqrt{C_2 d + 1}) \cup H(d)
                \subseteq \primes(d^{2/3+o_d(1)}) \,;
          \]
    \item when $d$ is even,
          \[ \primes(\sqrt{C_1 d + 1}) \cup H(d) \subseteq \Snew(d)
               \subseteq \primes(\sqrt{C_2 d + 1}) \cup H(d) \cup D(d) \,.
          \]
  \end{enumerate}
\end{theorem}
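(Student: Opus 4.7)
The plan is to prove the two inclusions separately; only the upper bound uses the new conjectures.

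For the lower bound, the inclusion $\primes(\sqrt{C_1 d + 1}) \subseteq \Snew(d)$ with $C_1 = 12$ follows from the Riemann--Roch/Hilbert Irreducibility argument in the proof of \cref{prop:Sprime}: when $p^2 - 1 \le 12 d$, the genus bound $g(X_1(p)) \le (p^2-1)/24$ gives $g \le (d+1)/2$, so Riemann--Roch produces a $\Q$-rational function on $X_1(p)$ of exact degree~$d$, and the infinitely many irreducible fibres supplied by Hilbert Irreducibility are non-cuspidal closed points of degree exactly~$d$, giving $p \in \Snew(d)$. The inclusion $H(d) \subseteq \Snew(d)$ is just \cref{lem:CM}, so no conjecture is needed here.

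For the upper bound, fix $p \in \Snew(d)$. By the strong form of \cref{conj:str_unif} the quantity $M \colonequals \sup_p \lfloor (2\sqrt{2}+3)^{\sdim(p)} \rfloor$ is finite, and \cref{cor:d+1} then yields three possibilities: either $p \le \sqrt{C_2 d + 1}$ with $C_2 \colonequals CM$ and $C = 2^{19}/325$; or $p \in \{2d+1, 3d+1\}$ with $d$ even, placing $p$ in $D(d)$; or the strengthened case~(d) applies, giving $(p-1)/2 \mid d$ and a non-cuspidal point of degree $m \colonequals 2d/(p-1)$ on $X_0(p)$. In this last case, the strong form of \cref{conj:X0} gives a finite bound $N \colonequals \sup_p \perdim(p)$, and a short estimate using $p(p-1) \ge (C_2 d + 1)/2$ shows that, after enlarging $C_2$ to ensure $C_2 > 4/\eps_N$, we have $m \le \eps_N p$, so that \cref{prop:X0_w_gen} applies. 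Its dichotomy gives either a CM point fixed by~$w_p$ --- in which case $m \in \{h(-p), h(-4p)\}$ and $p \in H(d)$ directly --- or an even $m$ with image of degree $m/2$ on $X_0(p)^+$, which forces $(p-1) \mid d$ and thus $p \in D(d)$.

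For odd $d$ the cases $p \in \{2d+1, 3d+1\}$ of \cref{cor:d+1} are vacuous, and the second alternative above would force $(p-1) \mid d$ odd, i.e.\ $p = 2$, which is absorbed into $\primes(\sqrt{C_2 d + 1})$ once $C_2 \ge 3$; so $\Snew(d) \subseteq \primes(\sqrt{C_2 d + 1}) \cup H(d)$. For the last inclusion in part~(1), the remark preceding the statement records that the Siegel class-number estimate $h(-p), h(-4p) = p^{1/2 + o(1)}$ combined with $h(-*) \cdot (p-1) = 2d$ forces $p = d^{2/3 + o_d(1)}$ for $p \in H(d)$, while $\sqrt{C_2 d + 1} = d^{1/2 + o(1)} = o(d^{2/3})$, giving the bound $\primes(d^{2/3 + o_d(1)})$.

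The main obstacle I expect is the bookkeeping of $C_2$: the same constant must simultaneously absorb the ``small-$p$'' cases of \cref{cor:d+1} and guarantee $m \le \eps_N p$ so that \cref{prop:X0_w_gen} can be invoked. Both conditions are governed by the two uniform bounds ($M$ and $N$) coming from the strong forms of the conjectures, but arranging everything so that $C_2$ is a single constant valid for all $p$ and all $d \ge 1$ --- rather than a function of $p$ or $d$ --- is the one step that requires care; the CM identification, the descent to $X_0(p)^+$, and the class-number estimate are all direct invocations of results already in hand.
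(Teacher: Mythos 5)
Your proposal is correct and follows essentially the same route as the paper: lower bound from \cref{prop:Sprime} together with \cref{lem:CM}, upper bound by combining \cref{cor:d+1} (with the uniform bound on $\sdim(p)$ forcing the strengthened case~(d)) and \cref{prop:X0_w_gen} (with the uniform bound on $\perdim(p)$), then identifying the CM case with $H(d)$ and the $X_0(p)^+$ case with $D(d)$. The only cosmetic difference is the constant: you take $C_2$ large enough that $2(p+1)/C_2 \le \eps_N p$ holds for all $p$ (which $C_2 > 4/\eps_N$ achieves unconditionally), whereas the paper uses $C_2 \ge \max\{2(168/167)\alpha, \beta, 163^2\}$ and argues separately that $p \ge 167$; your version of the estimate is marginally cleaner and avoids that side condition.
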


\begin{proof}
  The lower bound follows from~\cref{prop:Sprime} (noting that the proof actually
  shows that $\primes(\sqrt{C_1 d + 1}) \subseteq \Snew(d)$) and~\cref{lem:CM}.

  For the upper bound, let $\alpha = \eps_N^{-1}$ with $\eps_N$ as in~\cref{prop:X0_w_gen},
  where $N$ is a uniform upper bound for~$\perdim(p)$, and let
  $\beta = C \bigl\lfloor (2\sqrt{2} + 3)^s \bigr\rfloor$ with $C$ as in~\cref{cor:d+1},
  where $s$ is a uniform upper bound for~$\sdim(p)$. Assume that $p \in \Snew(d)$.
  Set $C_2 = \max\{2 (168/167) \alpha, \beta, 163^2\}$ and assume that $p > \sqrt{C_2 d + 1}$;
  then $p \ge 167$. By~\cref{cor:d+1}, either $d$ is even and $p \in \{2d+1, 3d+1\}$
  (and so $p \in D(d)$), or else $p-1$ divides~$2d$ and there is a non-cuspidal point~$P$
  of degree $2d/(p-1)$ on~$X_0(p)$. Note that
  \[ \frac{2d}{p-1} < \frac{2 (p^2 - 1)}{C_2 (p - 1)} < \frac{p + 1}{\alpha} \le \eps_N p \,, \]
  so \cref{prop:X0_w_gen}, applied with $d \leftarrow 2d/(p-1)$, tells us that
  $P$ is a fixed point of~$w_p$, which implies that $p \in H(d)$, or $d$ is even
  and $P$ comes from a point of degree~$d/(p-1)$ on~$X_0(p)^+$. So in this latter
  case, $p-1$ divides~$d$, and we again have that $p \in D(d)$.
\end{proof}

To formulate the corresponding statement for~$S(d)$, we set
\[ \tilde{H}(d) \colonequals \bigcup_{d' \mid d} H(d') \qquad \text{and} \qquad
   \tilde{D}(d) \colonequals \bigcup_{d' \mid d} D(d') \,.
\]
The sets $\tilde{H}(d)$ can be quite a bit larger than~$H(d)$, for example,
\[ \tilde{H}(6300) = \set{3, 5, 7, 11, 13, 19, 29, 31, 37, 43, 61, 101, 127, 151, 181, 211, 421} \]
has $17$~elements.

\begin{corollary}
  Assume~\cref{conj:X0}~\eqref{conj:X0_2} and~\cref{conj:str_unif}~\eqref{conj:str_unif_2}.
  Then there are constants $C_2 > C_1 > 0$ such that for all $d \ge 1$,
  \begin{enumerate}[\upshape(1)]
    \item when $d$ is odd,
          \[ \primes(\sqrt{C_1 d + 1}) \cup \tilde{H}(d) \subseteq S(d)
                \subseteq \primes(\sqrt{C_2 d + 1}) \cup \tilde{H}(d)
                \subseteq \primes(d^{2/3+o_d(1)}) \,;
          \]
    \item when $d$ is even,
          \[ \primes(\sqrt{C_1 d + 1}) \cup \tilde{H}(d) \subseteq S(d)
               \subseteq \primes(\sqrt{C_2 d + 1}) \cup \tilde{H}(d) \cup \tilde{D}(d) \,.
          \]
  \end{enumerate}
\end{corollary}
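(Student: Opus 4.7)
The plan is to derive the corollary from Theorem~\ref{thm:Sd_gen_strong} via the decomposition $S(d) = \bigcup_{d' \mid d} \Snew(d')$ already noted in the paper, together with the obvious monotonicity $\primes(x) \subseteq \primes(y)$ for $x \le y$ and the defining formulas $\tilde{H}(d) = \bigcup_{d' \mid d} H(d')$, $\tilde{D}(d) = \bigcup_{d' \mid d} D(d')$. The argument really is a routine union-over-divisors; no new idea is needed beyond bookkeeping and one parity observation.

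For the lower bounds, take $d' = d$ in the union: by Theorem~\ref{thm:Sd_gen_strong}, $\primes(\sqrt{C_1 d + 1}) \cup H(d) \subseteq \Snew(d) \subseteq S(d)$. To upgrade $H(d)$ to $\tilde{H}(d)$, observe that for every $d' \mid d$, Theorem~\ref{thm:Sd_gen_strong} gives $H(d') \subseteq \Snew(d') \subseteq S(d)$, so $\tilde{H}(d) \subseteq S(d)$ by the definition of~$\tilde{H}(d)$. This handles both parities simultaneously.

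For the upper bounds, apply Theorem~\ref{thm:Sd_gen_strong} to each $\Snew(d')$ with $d' \mid d$. When $d$ is odd, every divisor $d'$ of~$d$ is also odd, so case~(1) of the theorem yields $\Snew(d') \subseteq \primes(\sqrt{C_2 d' + 1}) \cup H(d') \subseteq \primes(\sqrt{C_2 d + 1}) \cup \tilde{H}(d)$; taking the union over $d' \mid d$ gives the stated containment. When $d$ is even, divisors may be of either parity; for odd $d'$ we use case~(1) as above, and for even $d'$ we use case~(2), which contributes the additional set $D(d') \subseteq \tilde{D}(d)$. The union then yields $S(d) \subseteq \primes(\sqrt{C_2 d + 1}) \cup \tilde{H}(d) \cup \tilde{D}(d)$.

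It remains to verify, in the odd case, the final containment $\primes(\sqrt{C_2 d + 1}) \cup \tilde{H}(d) \subseteq \primes(d^{2/3+o_d(1)})$. The first set is trivially contained in $\primes(d^{1/2+o(1)})$. For the second, by the Siegel-type estimate $h(-p), h(-4p) = p^{1/2+o(1)}$ already recorded in the remark preceding Lemma~\ref{lem:CM}, each element of $H(d')$ is a prime of size at most $(d')^{2/3+o_{d'}(1)} \le d^{2/3+o_d(1)}$, so $\tilde{H}(d) \subseteq \primes(d^{2/3+o_d(1)})$ as well. This completes the proof. I expect no genuine obstacle here beyond keeping track of the parities and the uniformity of the class-number bound across divisors $d' \le d$; all other ingredients are immediate from Theorem~\ref{thm:Sd_gen_strong}.
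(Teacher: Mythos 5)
Your proof is correct and takes essentially the same route as the paper, namely applying Theorem~\ref{thm:Sd_gen_strong} to each $\Snew(d')$ for $d' \mid d$ and taking unions; the paper simply states this in one line while you spell out the parity case analysis and the uniformity of the class-number estimate for the final containment $\tilde{H}(d) \subseteq \primes(d^{2/3+o_d(1)})$. No substantive difference.
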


\begin{proof}
  This follows from~\cref{thm:Sd_gen_strong} and $S(d) = \bigcup_{d' \mid d} \Snew(d')$.
\end{proof}


\begin{bibdiv}
\begin{biblist}

\bib{abramovich}{article}{
   author={Abramovich, Dan},
   title={A linear lower bound on the gonality of modular curves},
   journal={Internat. Math. Res. Notices},
   date={1996},
   number={20},
   pages={1005--1011},
   issn={1073-7928},
   review={\MR{1422373}},
   doi={10.1155/S1073792896000621},
}

\bib{Magma}{article}{
   author={Bosma, Wieb},
   author={Cannon, John},
   author={Playoust, Catherine},
   title={The Magma algebra system. I. The user language},
   note={Computational algebra and number theory (London, 1993)},
   journal={J. Symbolic Comput.},
   volume={24},
   date={1997},
   number={3-4},
   pages={235--265},
   issn={0747-7171},
   review={\MR{1484478}},
   doi={10.1006/jsco.1996.0125},
}

\bib{bruin-najman}{article}{
   author={Bruin, Peter},
   author={Najman, Filip},
   title={Hyperelliptic modular curves $X_0(n)$ and isogenies of elliptic
   curves over quadratic fields},
   journal={LMS J. Comput. Math.},
   volume={18},
   date={2015},
   number={1},
   pages={578--602},
   review={\MR{3389884}},
   doi={10.1112/S1461157015000157},
}

\bib{brumer}{article}{
   author={Brumer, Armand},
   title={The rank of $J_0(N)$},
   note={Columbia University Number Theory Seminar (New York, 1992)},
   journal={Ast\'erisque},
   number={228},
   date={1995},
   pages={3, 41--68},
   issn={0303-1179},
   review={\MR{1330927}},
}

\bib{CCS}{article}{
   author={Clark, Pete L.},
   author={Cook, Brian},
   author={Stankewicz, James},
   title={Torsion points on elliptic curves with complex multiplication
   (with an appendix by Alex Rice)},
   journal={Int. J. Number Theory},
   volume={9},
   date={2013},
   number={2},
   pages={447--479},
   issn={1793-0421},
   review={\MR{3005559}},
   doi={10.1142/S1793042112501436},
}

\bib{CES}{article}{
   author={Conrad, Brian},
   author={Edixhoven, Bas},
   author={Stein, William},
   title={$J_1(p)$ has connected fibers},
   journal={Doc. Math.},
   volume={8},
   date={2003},
   pages={331--408},
   issn={1431-0635},
   review={\MR{2029169}},
}

\bib{cowan}{article}{
   author={Cowan, Alex},
   title={Computing newforms using supersingular isogeny graphs},
   journal={Res. Number Theory},
   volume={8},
   date={2022},
   number={4},
   pages={Paper No. 96, 23},
   issn={2522-0160},
   review={\MR{4502909}},
   doi={10.1007/s40993-022-00392-z},
}

\bib{cowan-martin}{misc}{
   title={Counting modular forms by rationality field},
   author={Cowan, Alex},
   author={Martin, Kimball},
   year={2024-10-15},
   eprint={2301.10357},
   note={\url{https://arxiv.org/abs/2301.10357v2}},
   url={https://arxiv.org/abs/2301.10357},
}

\bib{DeLeo}{thesis}{
   author={De Leo, Davide},
   title={On Some Open Cases of a Conjecture of Conrad, Edixhoven and Stein},
   date={2024},
   organization={Università della Calabria},
}

\bib{DLS}{misc}{
   author={De Leo, Davide},
   author={Stoll, Michael},
   title={On Some Open Cases of a Conjecture of Conrad, Edixhoven and Stein},
   year={2025-05-16},
   eprint={2505.10777},
   note={\url{https://arxiv.org/abs/2505.10777v1}},
   url={https://arxiv.org/abs/2505.10777},
}

\bib{DEvHMZ}{article}{
   author={Derickx, Maarten},
   author={Etropolski, Anastassia},
   author={van Hoeij, Mark},
   author={Morrow, Jackson S.},
   author={Zureick-Brown, David},
   title={Sporadic cubic torsion},
   journal={Algebra Number Theory},
   volume={15},
   date={2021},
   number={7},
   pages={1837--1864},
   issn={1937-0652},
   review={\MR{4333666}},
   doi={10.2140/ant.2021.15.1837},
}

\bib{derickx_hoeij}{article}{
   author={Derickx, Maarten},
   author={van Hoeij, Mark},
   title={Gonality of the modular curve $X_1(N)$},
   journal={J. Algebra},
   volume={417},
   date={2014},
   pages={52--71},
   issn={0021-8693},
   review={\MR{3244637}},
   doi={10.1016/j.jalgebra.2014.06.026},
}

\bib{DKSS}{article}{
   author={Derickx, Maarten},
   author={Kamienny, Sheldon},
   author={Stein, William},
   author={Stoll, Michael},
   title={Torsion points on elliptic curves over number fields of small
   degree},
   journal={Algebra Number Theory},
   volume={17},
   date={2023},
   number={2},
   pages={267--308},
   issn={1937-0652},
   review={\MR{4564759}},
   doi={10.2140/ant.2023.17.267},
}

\bib{derickx-najman}{misc}{
  author={Derickx, Maarten},
  author={Najman, Filip},
  title={Classification of torsion of elliptic curves over quartic fields},
  date={2025-02-18},
  note={\url{https://arxiv.org/abs/2412.16016v2}},
}

\bib{diamondim}{article}{
   author={Diamond, Fred},
   author={Im, John},
   title={Modular forms and modular curves},
   conference={
      title={Seminar on Fermat's Last Theorem},
      address={Toronto, ON},
      date={1993--1994},
   },
   book={
      series={CMS Conf. Proc.},
      volume={17},
      publisher={Amer. Math. Soc., Providence, RI},
   },
   date={1995},
   pages={39--133},
   review={\MR{1357209}},
}

\bib{frey}{article}{
   author={Frey, Gerhard},
   title={Curves with infinitely many points of fixed degree},
   journal={Israel J. Math.},
   volume={85},
   date={1994},
   number={1-3},
   pages={79--83},
   issn={0021-2172},
   review={\MR{1264340}},
   doi={10.1007/BF02758637},
}

\bib{hoeij}{misc}{
   author={van Hoeij, Mark},
   title={Low degree places on the modular curve $X_1(N)$},
   date={2014-06-22},
   note={\url{https://arxiv.org/abs/1202.4355v5}},
}

\bib{ILS}{article}{
   author={Iwaniec, Henryk},
   author={Luo, Wenzhi},
   author={Sarnak, Peter},
   title={Low lying zeros of families of $L$-functions},
   journal={Inst. Hautes \'Etudes Sci. Publ. Math.},
   number={91},
   date={2000},
   pages={55--131 (2001)},
   issn={0073-8301},
   review={\MR{1828743}},
}

\bib{iwaniec-sarnak}{article}{
   author={Iwaniec, Henryk},
   author={Sarnak, Peter},
   title={The non-vanishing of central values of automorphic $L$-functions
   and Landau-Siegel zeros},
   journal={Israel J. Math.},
   volume={120},
   date={2000},
   pages={155--177},
   issn={0021-2172},
   review={\MR{1815374}},
   doi={10.1007/s11856-000-1275-9},
}

\bib{JKL1}{article}{
   author={Jeon, Daeyeol},
   author={Kim, Chang Heon},
   author={Lee, Yoonjin},
   title={Families of elliptic curves over cubic number fields with
   prescribed torsion subgroups},
   journal={Math. Comp.},
   volume={80},
   date={2011},
   number={273},
   pages={579--591},
   issn={0025-5718},
   review={\MR{2728995}},
   doi={10.1090/S0025-5718-10-02369-0},
}

\bib{JKL2}{article}{
   author={Jeon, Daeyeol},
   author={Kim, Chang Heon},
   author={Lee, Yoonjin},
   title={Families of elliptic curves over quartic number fields with
   prescribed torsion subgroups},
   journal={Math. Comp.},
   volume={80},
   date={2011},
   number={276},
   pages={2395--2410},
   issn={0025-5718},
   review={\MR{2813367}},
   doi={10.1090/S0025-5718-2011-02493-2},
}

\bib{JKS}{article}{
   author={Jeon, Daeyeol},
   author={Kim, Chang Heon},
   author={Schweizer, Andreas},
   title={On the torsion of elliptic curves over cubic number fields},
   journal={Acta Arith.},
   volume={113},
   date={2004},
   number={3},
   pages={291--301},
   issn={0065-1036},
   review={\MR{2069117}},
   doi={10.4064/aa113-3-6},
}

\bib{kamienny2}{article}{
   author={Kamienny, S.},
   title={Torsion points on elliptic curves and $q$-coefficients of modular
   forms},
   journal={Invent. Math.},
   volume={109},
   date={1992},
   number={2},
   pages={221--229},
   issn={0020-9910},
   review={\MR{1172689}},
   doi={10.1007/BF01232025},
}

\bib{kamiennymazur}{article}{
   author={Kamienny, S.},
   author={Mazur, B.},
   title={Rational torsion of prime order in elliptic curves over number
   fields},
   note={With an appendix by A. Granville;
   Columbia University Number Theory Seminar (New York, 1992)},
   journal={Ast\'{e}risque},
   number={228},
   date={1995},
   pages={3, 81--100},
   issn={0303-1179},
   review={\MR{1330929}},
}

\bib{kato}{article}{
   author={Kato, Kazuya},
   title={$p$-adic Hodge theory and values of zeta functions of modular forms},
   language={English, with English and French summaries},
   note={Cohomologies $p$-adiques et applications arithm\'{e}tiques. III},
   journal={Ast\'{e}risque},
   number={295},
   date={2004},
   pages={ix, 117--290},
   issn={0303-1179},
   review={\MR{2104361}},
}

\bib{KenkuMomose}{article}{
   author={Kenku, M. A.},
   author={Momose, F.},
   title={Torsion points on elliptic curves defined over quadratic fields},
   journal={Nagoya Math. J.},
   volume={109},
   date={1988},
   pages={125--149},
   issn={0027-7630},
   review={\MR{0931956}},
   doi={10.1017/S0027763000002816},
}

\bib{khare-wintenberger1}{article}{
   author={Khare, Chandrashekhar},
   author={Wintenberger, Jean-Pierre},
   title={Serre's modularity conjecture. I},
   journal={Invent. Math.},
   volume={178},
   date={2009},
   number={3},
   pages={485--504},
   issn={0020-9910},
   review={\MR{2551763}},
   doi={10.1007/s00222-009-0205-7},
}

\bib{khare-wintenberger2}{article}{
   author={Khare, Chandrashekhar},
   author={Wintenberger, Jean-Pierre},
   title={Serre's modularity conjecture. II},
   journal={Invent. Math.},
   volume={178},
   date={2009},
   number={3},
   pages={505--586},
   issn={0020-9910},
   review={\MR{2551764}},
   doi={10.1007/s00222-009-0206-6},
}

\bib{Khawaja}{article}{
   author={Khawaja, Maleeha},
   title={Torsion primes for elliptic curves over degree 8 number fields},
   journal={Res. Number Theory},
   volume={10},
   date={2024},
   number={2},
   pages={Paper No. 48, 9},
   issn={2522-0160},
   review={\MR{4737399}},
   doi={10.1007/s40993-024-00533-6},
}

\bib{kim-sarnak}{article}{
   author={Kim, Henry H.},
   title={Functoriality for the exterior square of ${\rm GL}_4$ and the
   symmetric fourth of ${\rm GL}_2$},
   note={With appendix 1 by Dinakar Ramakrishnan and appendix 2 by Kim and
   Peter Sarnak},
   journal={J. Amer. Math. Soc.},
   volume={16},
   date={2003},
   number={1},
   pages={139--183},
   issn={0894-0347},
   review={\MR{1937203}},
   doi={10.1090/S0894-0347-02-00410-1},
}

\bib{kolyvagin-logachev}{article}{
   author={Kolyvagin, V. A.},
   author={Logach\"{e}v, D. Yu.},
   title={Finiteness of the Shafarevich-Tate group and the group of rational
   points for some modular abelian varieties},
   language={Russian},
   journal={Algebra i Analiz},
   volume={1},
   date={1989},
   number={5},
   pages={171--196},
   issn={0234-0852},
   translation={
      journal={Leningrad Math. J.},
      volume={1},
      date={1990},
      number={5},
      pages={1229--1253},
      issn={1048-9924},
   },
   review={\MR{1036843}},
}

\bib{martin}{article}{
   author={Martin, Kimball},
   title={An on-average Maeda-type conjecture in the level aspect},
   journal={Proc. Amer. Math. Soc.},
   volume={149},
   date={2021},
   number={4},
   pages={1373--1386},
   issn={0002-9939},
   review={\MR{4242297}},
   doi={10.1090/proc/15328},
}

\bib{mazur1}{article}{
   author={Mazur, B.},
   title={Modular curves and the Eisenstein ideal},
   note={With an appendix by Mazur and M. Rapoport},
   journal={Inst. Hautes \'{E}tudes Sci. Publ. Math.},
   number={47},
   date={1977},
   pages={33--186 (1978)},
   issn={0073-8301},
   review={\MR{488287}},
}

\bib{mazur2}{article}{
   author={Mazur, B.},
   title={Rational isogenies of prime degree (with an appendix by D.
   Goldfeld)},
   journal={Invent. Math.},
   volume={44},
   date={1978},
   number={2},
   pages={129--162},
   issn={0020-9910},
   review={\MR{482230}},
   doi={10.1007/BF01390348},
}

\bib{merel}{article}{
   author={Merel, Lo\"{\i}c},
   title={Bornes pour la torsion des courbes elliptiques sur les corps de
   nombres},
   language={French},
   journal={Invent. Math.},
   volume={124},
   date={1996},
   number={1-3},
   pages={437--449},
   issn={0020-9910},
   review={\MR{1369424}},
   doi={10.1007/s002220050059},
}

\bib{Mordell}{article}{
   author={Mordell, L. J.},
   title={On the rational solutions of the indeterminate equations of the
   third and fourth degrees},
   journal={Proc. Cambridge Philos. Soc.},
   volume={21},
   date={1922/23},
   pages={179--192},
   issn={0008-1981},
   review={\MR{4656011}},
   doi={10.1080/10543406.2011.550093},
}

\bib{murty}{article}{
   author={Ram Murty, M.},
   title={The analytic rank of $J_0(N)({\bf Q})$},
   conference={
      title={Number theory},
      address={Halifax, NS},
      date={1994},
   },
   book={
      series={CMS Conf. Proc.},
      volume={15},
      publisher={Amer. Math. Soc., Providence, RI},
   },
   isbn={0-8218-0312-3},
   date={1995},
   pages={263--277},
   review={\MR{1353938}},
}

\bib{parent2}{article}{
   author={Parent, Pierre},
   title={Torsion des courbes elliptiques sur les corps cubiques},
   language={French, with English and French summaries},
   journal={Ann. Inst. Fourier (Grenoble)},
   volume={50},
   date={2000},
   number={3},
   pages={723--749},
   issn={0373-0956},
   review={\MR{1779891}},
}

\bib{parent3}{article}{
   author={Parent, Pierre},
   title={No 17-torsion on elliptic curves over cubic number fields},
   language={English, with English and French summaries},
   journal={J. Th\'{e}or. Nombres Bordeaux},
   volume={15},
   date={2003},
   number={3},
   pages={831--838},
   issn={1246-7405},
   review={\MR{2142238}},
}

\bib{PPVW}{article}{
   author={Park, Jennifer},
   author={Poonen, Bjorn},
   author={Voight, John},
   author={Wood, Melanie Matchett},
   title={A heuristic for boundedness of ranks of elliptic curves},
   journal={J. Eur. Math. Soc. (JEMS)},
   volume={21},
   date={2019},
   number={9},
   pages={2859--2903},
   issn={1435-9855},
   review={\MR{3985613}},
   doi={10.4171/JEMS/893},
}

\bib{ribet1976}{article}{
   author={Ribet, Kenneth A.},
   title={Galois representations attached to eigenforms with Nebentypus},
   conference={
      title={Modular functions of one variable, V},
      address={Proc. Second Internat. Conf., Univ. Bonn, Bonn},
      date={1976},
   },
   book={
      series={Lecture Notes in Math.},
      volume={Vol. 601},
      publisher={Springer, Berlin-New York},
   },
   date={1977},
   pages={17--51},
   review={\MR{0453647}},
}

\bib{ribet}{article}{
   author={Ribet, Kenneth A.},
   title={Abelian varieties over $\bf Q$ and modular forms},
   conference={
      title={Modular curves and abelian varieties},
   },
   book={
      series={Progr. Math.},
      volume={224},
      publisher={Birkh\"{a}user, Basel},
   },
   date={2004},
   pages={241--261},
   review={\MR{2058653}},
   doi={10.1007/978-3-0348-7919-4\_15},
}

\bib{silverberg}{article}{
   author={Silverberg, Alice},
   title={Torsion points on abelian varieties of CM-type},
   journal={Compositio Math.},
   volume={68},
   date={1988},
   number={3},
   pages={241--249},
   issn={0010-437X},
   review={\MR{971328}},
}

\bib{streng}{misc}{
   author={Streng, Marco},
   title={Generators of the group of modular units for $\Gamma^1(N)$ over the rationals},
   date={2019-02-01},
   note={\url{https://arxiv.org/abs/1503.08127}},
}

\bib{sutherland-ants}{article}{
   author={Sutherland, Andrew V.},
   title={Isogeny volcanoes},
   conference={
      title={ANTS X---Proceedings of the Tenth Algorithmic Number Theory
      Symposium},
   },
   book={
      series={Open Book Ser.},
      volume={1},
      publisher={Math. Sci. Publ., Berkeley, CA},
   },
   date={2013},
   pages={507--530},
   review={\MR{3207429}},
   doi={10.2140/obs.2013.1.507},
}

\bib{watkins}{article}{
   author={Watkins, Mark},
   title={Some heuristics about elliptic curves},
   journal={Experiment. Math.},
   volume={17},
   date={2008},
   number={1},
   pages={105--125},
   issn={1058-6458},
   review={\MR{2410120}},
}

\bib{Weil}{article}{
   author={Weil, Andr\'e},
   title={L'arithm\'etique sur les courbes alg\'ebriques},
   language={French},
   journal={Acta Math.},
   volume={52},
   date={1929},
   number={1},
   pages={281--315},
   issn={0001-5962},
   review={\MR{1555278}},
   doi={10.1007/BF02547409},
}

\end{biblist}
\end{bibdiv}

\end{document}